\newcommand{\commentaires}[1]{}
\newcommand*\bigcdot{\mathpalette\bigcdot@{.5}}
\newcommand*\bigcdot@[2]{\mathbin{\vcenter{\hbox{\scalebox{#2}{$\m@th#1\bullet$}}}}}
\newtheorem{definition}{Definition}[subsection]
\newtheorem{theorem}[definition]{Theorem}
\newtheorem{corollary}[definition]{Corollary}
\newtheorem{lemma}[definition]{Lemma}
\newtheorem{proposition}[definition]{Proposition}
\newtheorem{remark}[definition]{Remark}
\newtheorem{example}[definition]{Example}
\newtheorem{notations}[definition]{Notations}
\newtheorem{sect_definition}{Definition}[section]
\newtheorem{sect_theorem}[sect_definition]{Theorem}
\newtheorem{sect_lemma}[sect_definition]{Lemma}
\DeclareMathOperator{\grad}{grad}
\DeclareMathOperator{\SymF}{Sym}
\DeclareMathOperator{\Stab}{Stab}
\DeclareMathOperator{\Id}{Id}
\DeclareMathOperator{\tr}{tr}
\DeclareMathOperator{\Ree}{Re}
\DeclareMathOperator{\GL}{GL}
\DeclareMathOperator{\Mat}{Mat}
\DeclareMathOperator{\reg}{reg}
\DeclareMathOperator{\Rep}{Rep}
\DeclareMathOperator{\gl}{\mathfrak{gl}}
\title{Trivializations of moment maps}
\author{Mathieu Ballandras \\ \it{Université de Paris} \\ \it{Scuola Internazionale Superiore di Studi Avanzati} \\ \small{mballandras@imj-prg.fr} }
\begin{document}

\newcommand\gv{\mathfrak{g}_v}
\newcommand{\uv}{\mathfrak{u}_v}
\newcommand{\SU}{SU}
\newcommand\ZZ{\mathbb Z}
\newcommand{\Cbar}{\bold{\overline{C}}}
\newcommand{\Obar}{\bold{\overline{O}}}
\newcommand{\ql}{\overline{\mathbb{Q}}_l}
\newcommand{\C}{\mathbb{C}}
\newcommand{\RR}{\mathbb{R}}
\newcommand{\qlbar}{\overline{\mathbb{Q}}_l}
\newcommand{\cg}{\mathcal{C}(G)}
\newcommand{\cgd}{\mathcal{C}(\widehat{G})}
\newcommand{\F}{\mathcal{F}}
\newcommand{\Fi}{\mathbb{F}}
\newcommand{\sd}{\mathfrak{S}_d}
\newcommand{\sn}{\mathfrak{S}_n}
\newcommand{\Sym}{\mathfrak{S}}
\newcommand{\Fq}{\mathbb{F}_q}
\newcommand{\Fqbar}{\overline{\mathbb{F}}_q}
\newcommand{\QQ}{\mathbb Q }
\newcommand{\NN}{\mathbb N}
\newcommand{\LamX}{\Lambda \left[X\right]}
\newcommand{\SymX}{\SymF \left[X\right]}
\newcommand{\Part}{\mathcal{P}}
\newcommand{\Partbar}{\widebar{\mathcal{P}}}
\newcommand{\Ht}{\tilde{H}}
\newcommand{\SymnX}{\SymF_n\left[X\right]}
\newcommand{\inv}{^{-1}}
\newcommand{\NNp}{\NN_{>0}}
\newcommand{\typ}{\bold{T}}
\newcommand{\ptyp}{\bold{\tilde{T}}}
\newcommand{\dcb}[1]{\mathcal{D}_c^b\left(#1\right)}
\newcommand{\hi}{\mathcal H ^i}
\newcommand{\uic}[1]{\underline{\mathcal{I C}}_{#1}^\bullet}
\newcommand{\ic}[1]{\mathcal{I C}_{#1}^\bullet}
\newcommand{\obar}{\boldsymbol{\mathcal{\overline{O}}}}
\newcommand{\oo}{\boldsymbol{\mathcal{O}}}

\newcommand{\map}[5]{
\begin{array}{ccccc}
    #1 & : & #2 & \to & #3  \\
     & & #4 & \mapsto & #5
\end{array}
}

\newcommand{\gammabar}{\overline{\gamma}}
\maketitle
\begin{abstract}
We study various trivializations of moment maps. First in the general framework of a reductive group $G$ acting on a smooth affine variety. We prove that the moment map is a locally trivial fibration over a regular locus of the center of the Lie algebra of $H$ a maximal compact subgroup of $G$. The construction relies on Kempf-Ness theory \cite{kempf_ness} and Morse theory of the square norm of the moment map studied by Kirwan \cite{Kirwan}, Ness-Mumford \cite{Ness} and Sjamaar \cite{Sjamaar}. Then we apply it together with ideas from Nakajima \cite{nakajima1994} and Kronheimer \cite{kronheimer1989} to trivialize the hyperkähler moment map for Nakajima quiver varieties. Notice this trivialization result about quiver varieties was known and used by experts such as Nakajima and Maffei but we could not locate a proof in the literature.

\end{abstract}

\newpage

\tableofcontents

\newpage

\section{Introduction}
\subsection{Symplectic quotients and GIT quotients of affine varieties}
Consider a reductive group $G$ acting on a complex smooth affine variety $X$. For  $\chi^{\theta}\in\mathcal{X}^*(G)$  a linear character, $X^{\theta\text{-ss}}$ is the $\theta$-semistable locus and $X^{\theta\text{-s}}$ the $\theta$-stable locus. Mumford's geometric invariant theory \cite{mumford_git} provides a quotient
\[
X^{\theta\text{-ss}}\to X^{\theta\text{-ss}}// G.
\]
The affine variety $X$ can be embedded in an hermitian vector space $W$ such that the $G$-action is linear and restricts to a unitary action of a maximal compact subgroup $H\subset G$. The hermitian norm on $W$ is denoted by  $||\dots ||$. We study the associated real moment map
\[\mu : X \to \mathfrak{h}\] 
with $\mathfrak{h}$ the Lie algebra of $H$. Its definition relies on the choice of a non degenerate scalar product $\left\langle\dots,\dots\right\rangle$ on $\mathfrak{h}$ invariant under the adjoint action of $H$. The real moment map satisfies for all $Y\in\mathfrak{h}$
\begin{equation}\label{intro_eq_moment}
    \left\langle \mu(x),Y\right\rangle=\frac{1}{2}\left.\frac{d}{dt} ||\exp (i t Y).x||^2\right|_{t=0}
\end{equation}

Thanks to the invariant scalar product, to a linear character $\chi^{\theta}$ is associated an element $\theta$ in $Z(\mathfrak{h})$, the center of the Lie algebra $\mathfrak{h}$, such that for all $Y\in\mathfrak{h}$
\[
\left\langle \theta ,  Y \right\rangle = i d\chi^\theta_{\Id}(Y).
\]
For a pair $(\chi^{\theta},\theta)$, Kempf-Ness theory \cite{kempf_ness} relates the symplectic quotient (defined by Meyer \cite{Meyer} and Marsden-Weinstein \cite{Marsden_Weinstein}) to the GIT quotient, it gives an homeomorphism
\[
\mu\inv(\theta)/H \xrightarrow{\sim}  X^{\theta\text{-ss}}// G.
\]

We study trivialization of the moment map over a regular locus in the center of the Lie algebra $\mathfrak{h}$. First, in Section \ref{sect_kempf}, we study the general framework of a unitary action of a compact group on a smooth affine variety. After a reminder of Migliorini's version of Kempf-Ness theory \cite{migliorini}, a regular locus in $Z(\mathfrak{h})$ is defined. Over this locus the moment map is proved to be a locally trivial fibration. The case of a torus action was treated by Kac-Peterson \cite{Kac_Peterson}. The construction of the regular locus uses the negative gradient flow of square norm of the moment map studied by Kirwan \cite{Kirwan}, Ness-Mumford \cite{Ness}, Sjamaar \cite{Sjamaar}, Harada-Wilkin \cite{Harada} and Hoskins \cite{Hoskins}.

Nakajima quiver varieties introduced in \cite{nakajima1994} are particular instances of the symplectic quotients studied in Section \ref{sect_kempf}. Moreover they are hyperkähler quotients as defined by Hitchin-Karlhede-Lindström-Roček \cite{hklr}, the construction of those varieties is recalled in Section \ref{section_quiver}. In Section \ref{sect_hyperkahler_quiver}, the idea of Kronheimer \cite{kronheimer1989} and Nakajima \cite{nakajima1994} of consecutive use of different complex structures are applied together with techniques from previous sections to prove that the hyperkähler moment map is a locally trivial fibration. This implies in particular that the cohomology of the fibers forms a local system. This later result is used by Nakajima in \cite[Section 9]{nakajima1994} to construct a Weyl group action on the cohomology of quiver varieties. Maffei pursued this construction in \cite{Maffei}. I was informed by Nakajima that the property of the cohomology of the fibers can also be obtained by generalizing Slodowy argument from \cite{Slodowy_four} to quiver varieties. Similar results concerning cohomology of the fibers also exist in the framework of deformations of symplectic quotient singularities in Ginzburg-Kaledin \cite{ginzburg_kaledin}. Finally Crawley-Boevey and Van den Bergh \cite{CB_VDB} trivialize the hyperkähler moment map for Nakajima quiver varieties over complex lines. Nakajima explained to us how to extend their result to quaternionic lines minus a point thanks to the theory of twistor spaces see Theorem \ref{th_cbvdb}.

In the remaining of the introduction the results are stated and the various steps of the constructions are outlined.

\subsection{Real moment map for the action of a reductive group on an affine variety}
In Section \ref{sect_kempf}, $H\subset G$ is a maximal compact subgroup acting unitarily on a smooth affine variety $X$ embedded in an hermitian vector space. The differential geometry point of view from Kempf-Ness theory allows to extend the definition of $\theta$-stability for elements $\chi^{\theta}\in \mathcal{X}^*(G)^{\RR}:=\mathcal{X}^*(G)\otimes_{\ZZ} \RR$. The correspondence between linear characters and elements in the center of the Lie algebra $\mathfrak{h}$ thus extends to an isomorphism of $\RR$-vector spaces between $\mathcal{X}^*(G)^{\RR}$ and $Z(\mathfrak{h})$.

In \ref{subsect_HM} we prove a Lie group variant of Hilbert-Mumford criterion for $\theta$-stability. It is adapted to the differential geometric point of view of Kempf-Ness theory and the use of real parameters $\theta\in\mathcal{X}^*(G)^{\RR}$. Similar criteria are discussed by Georgoulas, Robbin and Salamon in \cite{GVR_Hilbert}.
\begin{theorem}[Hilbert-Mumford criterion for stability]
Let $\theta\in\mathcal{X}^*(G)^{\RR}$ and $x\in X$. The following statements are equivalent
\begin{enumerate}[label=(\roman*)]
\item\label{item:1}$x$ is $\theta$-stable.
\item  For all $Y\in\mathfrak{h}$, different from zero, such that $\lim_{t\to+\infty}\exp(i t Y).x$ exists  then $\left\langle\theta,Y\right\rangle < 0$.
\end{enumerate}
\end{theorem}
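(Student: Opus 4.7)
My plan is to reformulate $\theta$-stability as properness of a single Kempf-Ness function on $\mathfrak{h}$, and then read off the equivalence by analysing this function along rays. Concretely, I would use Migliorini's real-parameter extension of Kempf-Ness theory (recalled earlier in Section \ref{sect_kempf}) to characterize $\theta$-stability of $x$ as properness on $\mathfrak{h}$ of the Kempf-Ness function
\[
\Phi_{x,\theta}(Y) := \|\exp(iY).x\|^{2} + \exp\bigl(-2\langle\theta,Y\rangle\bigr),
\]
together with the expected finite-stabilizer condition. For an integer character $\chi^\theta$, $\Phi_{x,\theta}$ is just the norm squared of the lifted $G$-orbit of $(x,1)\in X\times\C$ under the character-twisted action, so this step slots classical Kempf-Ness into the real-parameter framework.

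Next, for fixed $Y\in\mathfrak{h}\setminus\{0\}$, I would use the self-adjointness of $iY$ on the ambient hermitian space $W$ to expand $x=\sum_{\alpha\in\RR}v_\alpha$ in its eigendecomposition. Then
\[
\Phi_{x,\theta}(tY) = \sum_{\alpha} e^{2t\alpha}\|v_\alpha\|^2 + e^{-2t\langle\theta,Y\rangle}
\]
is a finite sum of real exponentials in $t\geq 0$, hence convex and positive. Such a function either diverges to $+\infty$ or is bounded on $[0,+\infty)$, and it is bounded iff every exponent is non-positive, i.e.\ iff $v_\alpha=0$ for every $\alpha>0$ (exactly the condition that $\lim_{t\to+\infty}\exp(itY).x$ exists, with limit $v_0$) and simultaneously $\langle\theta,Y\rangle\geq 0$. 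So the condition that $\Phi_{x,\theta}(tY)\to+\infty$ for every $Y\neq 0$ is precisely statement \ref{item:1}'s counterpart (ii) above, giving the equivalence on each ray.

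The last step will be to pass from properness along rays to genuine properness on $\mathfrak{h}$. Here I would restrict to a maximal torus $\mathfrak{t}\subset\mathfrak{h}$ where the operators $iY$ simultaneously diagonalize into weight spaces $W=\bigoplus_\beta W_\beta$: then $\Phi_{x,\theta}|_\mathfrak{t}$ is a finite sum of exponentials of \emph{linear} functionals of $Y$, globally convex, with piecewise-linear recession function. Positivity of this recession function on $\mathfrak{t}\setminus\{0\}$ under (ii) gives a uniform lower bound on the unit sphere of $\mathfrak{t}$ and hence coercivity on $\mathfrak{t}$. Via the Cartan decomposition $\mathfrak{h}=\Ad(H).\mathfrak{t}$, compactness of $H$, and the transformation rule $\Phi_x(\Ad(h)Y')=\Phi_{h^{-1}.x}(Y')$, this coercivity propagates to all of $\mathfrak{h}$.

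The main obstacle will be the first step: establishing the real-parameter Kempf-Ness correspondence and identifying $\theta$-stability with properness of $\Phi_{x,\theta}$, which is the technical content of Section \ref{sect_kempf}. Once this is granted, the equivalence of \ref{item:1} and (ii) reduces to elementary convex analysis of finite sums of real exponentials.
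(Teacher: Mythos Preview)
Your plan is sound and the ray-wise analysis is correct, but it is organised quite differently from the paper's proof. The paper does \emph{not} factor through a properness characterisation of stability. Instead it argues each implication directly: for $(i)\Rightarrow(ii)$ it moves to a point $x_0=g_0.x$ where $\phi^{\theta,x_0}$ is minimal at the identity, then uses Lemma~\ref{lemma_f_Z} and a polar-decomposition/compactness extraction to force $\phi^{\theta,x}(\exp(inY))\to+\infty$ along any ray whose limit exists, hence $\langle\theta,Y\rangle<0$; for $\neg(i)\Rightarrow\neg(ii)$ it takes an unbounded minimizing sequence $\exp(iY_n)$ for $\phi^{\theta,x}$, normalises, and extracts a limiting direction $Y$ witnessing the failure of $(ii)$. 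Your convexity/recession-function packaging on a maximal torus, followed by Cartan $\mathfrak h=\Ad(H).\mathfrak t$ plus compactness of $H$, is essentially the same extraction argument rephrased in convex-analytic language; what each buys is clarity: the paper's version is self-contained from the definitions already in place, while yours makes the role of convexity and of the torus weight decomposition more transparent.

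One genuine point to flag: the equivalence ``$\theta$-stable $\Leftrightarrow$ $\Phi_{x,\theta}$ proper on $\mathfrak h$'' that you call the technical content of Section~\ref{sect_kempf} is \emph{not} what that section proves. The paper's Definition~\ref{def_real_stab} and Proposition~\ref{corollary_orbit} characterise stability through minima of $\phi^{\theta,x}(g)=\|g.x\|^2-\log|\chi^\theta(g)|^2$ and the structure of $G.x\cap\mu^{-1}(\theta)$, not through properness of your $\Phi_{x,\theta}$. For integral $\theta$ your interpretation of $\Phi_{x,\theta}$ as the norm squared on $X\times\mathbb C$ gives the bridge via classical Kempf--Ness, but for real $\theta$ you must still argue that properness of $\Phi_{x,\theta}$ (or of $\phi^{\theta,x}\circ\exp(i\,\cdot\,)$, which has the same coercivity behaviour) is equivalent to the paper's definition of $\theta$-stability. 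This is doable---properness forces a minimum and a finite stabiliser, and conversely the paper's Lemma~\ref{prop_critical_moment} and Proposition~\ref{corollary_orbit} let you recover coercivity from stability---but it is an extra step you should spell out rather than cite as already done.
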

\noindent This theorem is applied in  \ref{subsect_King} to generalize a result of King \cite{king} characterizing $\theta$-stability for quiver representations.

The regular locus $B^{\reg}$ is introduced in \ref{subsect_regular_locus}. Its construction relies on the study of the negative gradient flow of the square norm of the moment map from Kirwan \cite{Kirwan}, Ness-Mumford \cite{Ness}, Sjamaar \cite{Sjamaar}, Harada-Wilkin \cite{Harada} and Hoskins \cite{Hoskins}. $B^{\reg}$ is an open subset of $Z(\mathfrak{h})$ such that for $\theta\in B^{\reg}$, one has $X^{\theta\text{-ss}}=X^{\theta\text{-s}}\ne\emptyset$ and for all $x\in X^{\theta\text{-s}}$ the stabilizer of $x$ is trivial. Over the regular locus, the moment map is a locally trivial fibration. A similar fibration when $G$ is a torus follows from a result of Kac-Peterson \cite{Kac_Peterson}. Let us also mention that with the flow of the norm square in the hermitian space $W$, Sjamaar \cite{Sjamaar} constructed a retraction of the $0$-stable locus to the fiber over $0$ of the moment map.
\begin{theorem}\label{th_intro_moment}
Let $\theta_0$ in $B^{\reg}$, and $U_{\theta_0}$ the connected component of $B^{\reg}$ containing $\theta_0$. There is a diffeomorphism $f$ such that the following diagram commutes

\begin{equation*}
\begin{tikzcd}
 U_{\theta_0} \times {\mu}\inv \left(\theta_0\right)\arrow[swap]{rd}{} \arrow[r,"f"',"\sim"] &  \mu\inv( U_{\theta_0} ) \arrow{d}{{\mu}_{}} \\
 &   U_{\theta_0} 
\end{tikzcd}
\end{equation*}
Moreover $f$ is $H$ equivariant so that the diagram goes down to quotient
\begin{equation*}
\begin{tikzcd}
 U_{\theta_0} \times {\mu}\inv \left(\theta_0\right)/H\arrow[swap]{rd}{} \arrow[r,"\sim"] &  \mu\inv( U_{\theta_0} )/H \arrow{d} \\
 &   U_{\theta_0} 
\end{tikzcd}
\end{equation*}

\end{theorem}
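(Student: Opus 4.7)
The plan is to build, for each $\eta \in Z(\mathfrak{h})$, an $H$-invariant vector field $\widetilde{\eta}$ on $\mu^{-1}(U_{\theta_0})$ that projects under $d\mu$ to the constant vector field $\eta$ on $Z(\mathfrak{h})$, and then to integrate these flows to trivialize $\mu$ over $U_{\theta_0}$.

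At $x \in \mu^{-1}(B^{\reg})$ the stabilizer is trivial by construction of $B^{\reg}$, so the infinitesimal action $\alpha_x: Y \in \mathfrak{h} \mapsto Y^*_x \in T_xX$ is injective. Using the Kähler data $(g,J)$ inherited from the hermitian embedding $X \hookrightarrow W$ and the invariant scalar product on $\mathfrak{h}$, I would define $\beta_x: \mathfrak{h} \to \mathfrak{h}$ by $\langle \beta_x(Y), Z\rangle = g(Y^*_x, Z^*_x)$; it is symmetric and positive definite, hence invertible. For $\eta \in Z(\mathfrak{h})$ set
\[ \widetilde{\eta}(x) := J\,\bigl(\beta_x^{-1}(\eta)\bigr)^*_x. \]
Differentiating the defining identity \eqref{intro_eq_moment} yields $d\langle \mu, Y\rangle(V) = g(JY^*,V)$, which combined with the definition of $\beta_x$ and the self-adjointness of $\beta_x$ for $\langle\cdot,\cdot\rangle$ gives $d\mu_x(\widetilde{\eta}(x)) = \eta$. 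The $H$-equivariance of $(g, J, \alpha)$ forces $\beta_{h\cdot x} = \Ad(h)\,\beta_x\,\Ad(h^{-1})$, and the centrality of $\eta$ absorbs the $\Ad$-twist, so that $\widetilde{\eta}$ is genuinely $H$-invariant.

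The analytic heart, and the main obstacle, is completeness: for a smooth path $\gamma:[0,1]\to U_{\theta_0}$ and any $x_0 \in \mu^{-1}(\gamma(0))$, the flow of the time-dependent vector field $\widetilde{\gamma'(t)}$ starting at $x_0$ must extend to all of $[0,1]$. By construction the moment map image along such a curve is exactly $\gamma(t)$, so the curve stays in the stable locus and in $\mu^{-1}(U_{\theta_0})$; the only remaining threat is $\|x(t)\| \to \infty$ in finite time, which would occur if the orbit were forced toward the semistable boundary. To rule this out I would appeal to the gradient-flow analysis of $\|\mu - \theta\|^2$ due to Kirwan, Ness--Mumford, Sjamaar and Harada--Wilkin, which already underlies the definition of $B^{\reg}$: uniform $\theta$-stability for $\theta$ ranging over the compact segment $\gamma([0,1])$, combined with Kempf--Ness-type inequalities, should produce an a priori estimate on $\|x(t)\|$ in terms of $\|\mu(x(t))\|$, forcing completeness.

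Once completeness is in hand, a connected component $U_{\theta_0}$ of $B^{\reg}$ is an open subset of $Z(\mathfrak{h})$ and, in the wall-chamber settings the paper is aimed at, is in fact convex and so star-shaped around $\theta_0$. I would then define
\[ f(\theta, x_0) := \text{time-}1\text{ horizontal lift of } t \mapsto (1-t)\theta_0 + t\theta \text{ from } x_0, \]
which is smooth and fits into the first diagram by the defining property $d\mu \circ \widetilde{\eta} = \eta$. Reversing the flow gives the inverse, so $f$ is a diffeomorphism. Finally, $H$-invariance of each $\widetilde{\eta}$ means its flow commutes with the $H$-action, whence $f$ is $H$-equivariant and descends to the second diagram.
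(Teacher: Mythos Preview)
Your approach is genuinely different from the paper's, and the contrast is instructive.

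The paper does \emph{not} integrate a connection. Instead it exploits Kempf--Ness theory to write down the trivialisation in closed form: for $x\in\mu^{-1}(\theta_0)$ and $\theta\in U_{\theta_0}$ one has $X^{\theta\text{-s}}=X^{\theta_0\text{-s}}$, so $x$ is $\theta$-stable with trivial stabiliser; Proposition~\ref{corollary_orbit} then gives a unique $Y^{\theta,x}\in\mathfrak{h}$ with $\exp(iY^{\theta,x}).x\in\mu^{-1}(\theta)$, and $f(\theta,x):=\exp(iY^{\theta,x}).x$. Smoothness of $(\theta,x)\mapsto Y^{\theta,x}$ is obtained by the implicit function theorem applied to $(\theta,Y,x)\mapsto \mu(\exp(iY).x)-\theta$, and the inverse is given by the same formula with the roles of $\theta_0$ and $\theta$ swapped. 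No path, no flow, and no completeness question arises: the existence statement that replaces your completeness step is that the Kempf--Ness function $\phi^{\theta,x}$ attains its minimum on $G$, which is exactly what $\theta$-stability of $x$ provides. The equality $X^{\theta\text{-s}}=X^{\theta_0\text{-s}}$ across the whole connected component also removes any need for $U_{\theta_0}$ to be star-shaped.

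Your Ehresmann-type argument is sound in outline, and your vector field $\widetilde{\eta}(x)=J(\beta_x^{-1}\eta)^*_x$ is the natural one (it is precisely the $i\mathfrak h$-direction in the orbit, the same direction the paper moves in), but the step you flag as ``the analytic heart'' is not actually carried out. Saying that Kempf--Ness-type inequalities ``should produce an a~priori estimate on $\|x(t)\|$'' is not yet an estimate; you would need, for instance, to show that along the integral curve the function $t\mapsto \phi^{\gamma(t),x(t)}(\Id)$ is controlled, and then turn that into a bound on $\|x(t)\|$ uniform over the compact segment $\gamma([0,1])$. This can be done, but it amounts to re-proving properness of the Kempf--Ness function on the stable locus, which is essentially the content the paper uses directly. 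Similarly, the convexity of $U_{\theta_0}$ that you invoke is plausible (the chambers are complements of finitely many sub-cones) but is neither stated nor needed in the paper's argument. In short: your route can be made to work, but the paper's closed-form construction is cleaner precisely because it replaces the ODE completeness problem by the variational existence statement already built into $\theta$-stability.
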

To prove this theorem, first we prove that for any $\theta\in U_{\theta_0}$ and $x\in X^{\theta_0\text{-s}}$ there exists a unique $Y(\theta,x)\in \mathfrak{h}$ such that $\exp(i Y(\theta,x)).x \in \mu\inv(\theta)$. This is achieved thanks to Migliorini's version of Kempf-Ness theory \cite{migliorini} which applies to affine varieties and real parameters $\chi^{\theta}\in \mathcal{X}^*(G)^{\RR}$.  Then the map $f$ is defined by 
\[
f(\theta,x):=\exp\left(i Y(\theta,x)\right).x
\] and similarly for its inverse 
\[
f\inv(x)=\left(\mu(x), \exp\left(i Y\left(\theta_0, x \right) \right).x\right).
\]
The smoothness of $f$ and its inverse is proved in \ref{subsect_triv_moment} with the implicit function theorem.

\subsection{Nakajima quiver varieties and hyperkähler moment map}
The quiver varieties considered in this paper were introduced by Nakajima \cite{nakajima1994}. Let $\widetilde{\Gamma}$ be an extended quiver with vertices $\Omega_0$ and edges $\widetilde{\Omega}$, fix a dimension vector $v\in \mathbb{N}^{\Omega_0}$. The space of representations of $\widetilde{\Gamma}$ with dimension vector $v$ is
\[
\Rep\left(\widetilde{\Gamma},v\right)=\bigoplus_{\gamma\in\widetilde{\Omega}}\Mat_{\C}(v_{h(\gamma)},v_{t(\gamma)}).
\]
with $h(\gamma)\in \Omega_0$ the head of the edge $\gamma$ and $t(\gamma)\in\Omega_0$ its tail. This space is acted upon by the group 
\[
G_v\cong \left\lbrace \left.(g_j)_{j\in {\Omega_0}}\in \prod_{j\in\Omega_0 }\GL_{v_j}\right| \prod_{j\in {\Omega_0}} \det(g_{j})=1 \right\rbrace.
\]
This action is described in  \ref{subsect_generalities}, it restricts to a unitary action of the maximal compact subgroup
\[
U_v=\left\lbrace \left.(g_j)_{j\in {\Omega_0}}\in \prod_{j\in {\Omega_0}} U_{v_j}\right| \prod_{j\in {\Omega_0}} \det(g_{j})=1 \right\rbrace
\]
with $U_{v_j}$ the group of unitary matrices of size $v_j$. Denote by $\uv$ the Lie algebra of $U_v$. This is a particular instance of the general situation of Section \ref{sect_kempf}: a unitary action of a compact group on a smooth complex affine variety. Let $\theta\in \mathbb{Z}^{\Omega_0}$ such that $\sum_j v_j \theta_j = 0$. Define $\chi^{\theta}$ a linear character of $G_v$ by
\begin{equation}
\chi^{\theta}\left((g_j)_{j\in\Omega_0}\right) := \prod_{j\in\Omega_0} \det(g_j)^{-\theta_j}.
\end{equation}
For quiver representations, the correspondence between linear characters and elements in the center of $\uv$ is easily described: to the character $\chi^{\theta}$ is associated the element $(-i\theta_j \Id_{v_j})_{j\in\Omega_0}\in Z(\uv)$. This element is still denoted by $\theta$, and $Z(\uv)$ is identified in this way with a subspace of $\RR^{\Omega_0}$.

A well-known theorem from King \cite{king} gives a characterization of $\theta$-stability for quiver representations. In \ref{subsect_King} this result is generalized to real parameters corresponding to elements $\chi^{\theta}\in\mathcal{X}^*(G)^{\RR}$.
\begin{theorem}
For $\theta\in\mathbb{R}^{\Omega_0}$ such that $\sum_{j\in\Omega_0} \theta_j v_j = 0$ and associated element $\chi^{\theta}\in\mathcal{X}^*(G_v)^{\RR}$. A quiver respresentation $(V,\phi)$ is $\theta$-stable if and only if for all subrepresentation $W\subset V$
\[
\sum_{j\in \Omega_0} \theta_{j} \dim W_j < 0.
\]
unless $W=V$ or $W=0$.
\end{theorem}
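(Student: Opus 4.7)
The plan is to deduce the theorem from the Lie-group Hilbert-Mumford criterion of the previous theorem, following King \cite{king} but using skew-Hermitian elements of $\uv$ in place of one-parameter subgroups, so that the argument works with real parameters. $\theta$-stability of $(V,\phi)$ is equivalent to $\langle \theta,Y\rangle<0$ for every non-zero $Y\in\uv$ such that $\lim_{t\to+\infty}\exp(itY).(V,\phi)$ exists, and both sides of this criterion will be translated into filtrations of $(V,\phi)$ by subrepresentations.

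Given $Y\in\uv$, the operator $A_j:=-iY_j$ on $V_j$ is Hermitian, so $V_j=\bigoplus_\mu V_j^\mu$ decomposes into real weight spaces. Under $\exp(itY)$ the block of $\phi_\gamma$ sending $V_{t(\gamma)}^\lambda$ to $V_{h(\gamma)}^\mu$ is rescaled by a real exponential with rate $\lambda-\mu$, so the limit at $+\infty$ exists if and only if $\phi_\gamma(V_{t(\gamma)}^\lambda)\subset\bigoplus_{\mu\geq\lambda}V_{h(\gamma)}^\mu$ for every arrow $\gamma$. Writing $\mu_1>\cdots>\mu_N$ for the distinct weights, this is exactly the statement that $W^{(k)}_j:=\bigoplus_{\mu\geq\mu_k}V_j^\mu$ is a filtration of $(V,\phi)$ by subrepresentations, with $W^{(k)}$ proper and non-zero for $1\leq k\leq N-1$.

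Using the identification $\theta\leftrightarrow(-i\theta_j\Id_{v_j})$ in $Z(\uv)$ and the invariant form recalled in the introduction, a direct trace computation yields $\langle\theta,Y\rangle=\sum_j\theta_j\sum_\mu \mu\dim V_j^\mu$ up to a fixed overall sign. Abel summation together with the constraint $\sum_j\theta_j v_j=0$ makes the top-degree term drop out and gives
\[
\langle\theta,Y\rangle \;=\; c\sum_{k=1}^{N-1}(\mu_k-\mu_{k+1})\sum_{j\in\Omega_0}\theta_j\dim W^{(k)}_j,
\]
with $c=\pm 1$, each $\mu_k-\mu_{k+1}$ strictly positive, and each $W^{(k)}$ a proper non-zero subrepresentation of $(V,\phi)$.

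Both directions now follow by inspection. For the converse, the assumed inequality $\sum_j\theta_j\dim W_j<0$ on every proper non-zero subrepresentation forces every summand in the display to have a uniform sign, so $\langle\theta,Y\rangle<0$ whenever $Y\neq 0$ (i.e.\ $N\geq 2$), and $\theta$-stability follows from the Hilbert-Mumford criterion. For the direct implication, given a proper non-zero subrepresentation $W$ I would test the criterion on the explicit operator $Y\in\uv$ acting by $i(\dim V-\dim W)\,\Id$ on $W_j$ and by $-i\dim W\,\Id$ on a Hermitian complement of $W_j$; its traces cancel so that $Y\in\uv$, the limit exists precisely because $W$ is a subrepresentation, and $\langle\theta,Y\rangle$ reduces to $\dim V\cdot\sum_j\theta_j\dim W_j$ up to the same overall sign. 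The only delicate point I anticipate is keeping the sign conventions coherent between the character $\chi^\theta$, its associated element of $Z(\uv)$, and the chosen invariant form on $\uv$, so that the three inequalities ``$\theta$-stable'', ``$\langle\theta,Y\rangle<0$'' and ``$\sum_j\theta_j\dim W_j<0$'' line up with the correct direction.
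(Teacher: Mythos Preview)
Your proposal is correct and follows essentially the same route as the paper's proof of Theorem~\ref{th_king_lie}: both reduce to the Lie-group Hilbert--Mumford criterion (Theorem~\ref{theorem_HM}), translate the existence of $\lim_{t\to+\infty}\exp(itY).x$ into a filtration of $(V,\phi)$ by subrepresentations (this is exactly Lemma~\ref{lemma_subrep}), and then use Abel summation together with $\sum_j\theta_j v_j=0$ to rewrite $\langle\theta,Y\rangle$ as a positive combination of the quantities $\sum_j\theta_j\dim W^{(k)}_j$. The only differences are cosmetic: the paper diagonalises $iY$ and filters by $V_{\le\lambda}$ with increasing eigenvalues, while you diagonalise $-iY$ and filter by $\bigoplus_{\mu\ge\mu_k}V^\mu$ with decreasing weights; these are the same filtration. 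Your caution about signs is well placed but unnecessary here: with the paper's conventions $\theta=(-i\theta_j\Id_{v_j})$ and $\langle Y,Z\rangle=\sum_j\tr(Y_jZ_j)$, your constant is $c=+1$, and the inequalities line up as stated.
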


The space $\Rep\left(\widetilde{\Gamma},v\right)$ admits three complex structures denoted by  $I, J$ and $K$, they are detailed in \ref{subsect_hk_rep}. There is a real moment map for each one of this complex structure, they are denoted by  $\mu_I, \mu_J$ and $\mu_K$. They are defined as in equation \eqref{intro_eq_moment}, for instance
\[
    \left\langle \mu_I (x),Y\right\rangle=\frac{1}{2}\left.\frac{d}{dt} ||\exp (t.I. Y).x||^2\right|_{t=0}
\]
and
\[
    \left\langle \mu_J (x),Y\right\rangle=\frac{1}{2}\left.\frac{d}{dt} ||\exp (t.J. Y).x||^2\right|_{t=0}.
\]
Together they form the hyperkähler moment map $\mu_{\mathbb{H}}=(\mu_I,\mu_J,\mu_K)$, it takes values in $\uv^{\oplus 3}$.

Nakajima quiver varieties are constructed for $(\theta_I,\theta_J,\theta_K)\in Z(\uv)^{\oplus 3}$ as quotients of fibers of the hyperkähler moment map.
\[
\mathfrak{m}_v(\theta_I,\theta_J,\theta_K) = \mu_{\mathbb{H}}\inv(\theta_I,\theta_J,\theta_K) / U_v.
\]
The hyperkähler regular locus in $Z(\uv)^{\oplus 3}$ is defined by:
\begin{definition}[Hyperkähler regular locus]
For $w\in\mathbb{N}^{\Omega_0}$ a  dimension vector
\begin{equation*}
    H_w:=\left\lbrace(\theta_I,\theta_J,\theta_K)\in\left(\RR^{\Omega_0}\right)^3\left|\sum_{j} w_j \theta_{I,j} = \sum_j w_j \theta_{J,j}= \sum_j w_j \theta_{K,j} =0\right.\right\rbrace.
\end{equation*}
The regular locus is
\begin{equation}
    H_v^{\reg}=H_v \setminus \bigcup_{w< v} H_w
\end{equation}
the union is over dimension vector $w\ne v$ such that $0\le w_i\le v_i$.
\end{definition}
In \ref{subsect_triviality_hk} various trivializations of the hyperkähler moment map are discussed. We prove that the hyperkähler moment map is a locally trivial fibration by consecutive use of constructions of Theorem \ref{th_intro_moment} for each complex structure and associated moment map. The idea of consecutive use of different complex structures comes from Kronheimer \cite{kronheimer1989} and Nakajima \cite{nakajima1994}.
\begin{theorem}[Local triviality of the hyperkähler moment map]
Over the regular locus $H_v^{\reg}$, the hyperkähler moment map $\mu_{\mathbb{H}}$ is a locally trivial fibration compatible with the $U_v$-action:

Any $(\theta_I,\theta_J,\theta_K )\in H_v^{\reg} $ admits an open neighborhood $V$, and a diffeomorphism $f$ such that the following diagram commutes
\begin{equation*}
\begin{tikzcd}
V\times {\mu}_{\mathbb{H}}\inv (\theta_I,\theta_J,\theta_K )\arrow[swap]{rd}{} \arrow[r,"f"',"\sim"] &  \mu_{\mathbb{H}}\inv(V) \arrow{d}{\mu_{\mathbb{H}}} \\
 &  V
\end{tikzcd}
\end{equation*}
Moreover $f$ is compatible with the $U_v$-action so that the diagram goes down to quotient
\begin{equation*}
\begin{tikzcd}
V\times {\mu}_{\mathbb{H}}\inv (\theta_I,\theta_J,\theta_K )/U_v\arrow[swap]{rd}{} \arrow[r,"\sim"] &   \mu_{\mathbb{H}}\inv(V) /U_v \arrow[d,"p"] \\
 &  V
\end{tikzcd}
\end{equation*}

\end{theorem}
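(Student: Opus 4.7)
The plan is to iterate Theorem \ref{th_intro_moment} three times, once for each complex structure $I$, $J$, $K$, in the spirit of the Kronheimer--Nakajima constructions. Fix $(\theta_I^0, \theta_J^0, \theta_K^0) \in H_v^{\reg}$ and a point $x_0 \in \mu_{\mathbb{H}}\inv(\theta_I^0, \theta_J^0, \theta_K^0)$. The first step is to verify that the hyperkähler regularity condition implies, for each complex structure separately, that the corresponding real parameter lies in the regular locus $B^{\reg}$ from Section \ref{sect_kempf} for the appropriate slice. This is a hyperkähler analogue of the King-type criterion: the definition of $H_v^{\reg}$ forbids subrepresentations on which all three moment-map equations can hold with the prescribed central values, which in turn forces trivial stabilizers and stability at each subsequent step.

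Next, apply Theorem \ref{th_intro_moment} with complex structure $I$ to obtain an $U_v$-equivariant diffeomorphism
\[
f_I : V_I \times \mu_I\inv(\theta_I^0) \xrightarrow{\sim} \mu_I\inv(V_I),
\qquad (\theta_I, x) \mapsto \exp\bigl(I\, Y_I(\theta_I, x)\bigr).x,
\]
where $V_I$ is a connected open neighborhood of $\theta_I^0$ in $Z(\uv)$. The key observation, which makes consecutive complex structures compatible, is that $\exp(I Y)$ belongs to the complexification $G_v$ with respect to $I$, and the combination $\mu_J + i\mu_K$ is the \emph{holomorphic} moment map for that complex structure. Being $G_v$-equivariant under the coadjoint action, and since the central values $\theta_J^0$ and $\theta_K^0$ are fixed by that action, $f_I$ leaves $\mu_J$ and $\mu_K$ unchanged.

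Therefore $f_I$ restricts to a trivialization that carries $(\mu_J, \mu_K)$-values along. Now apply Theorem \ref{th_intro_moment} inside $\mu_I\inv(\theta_I^0)$ with complex structure $J$ and moment map $\mu_J$: the flow $\exp(J Y)$ belongs to the complexification with respect to $J$, so by the same reasoning $\mu_I$ and $\mu_K$ (which together assemble into the holomorphic moment map for $J$) are preserved on central values, and the trivialization remains inside $\mu_I\inv(\theta_I^0)$; this yields a neighborhood $V_J$ of $\theta_J^0$. A final iteration with complex structure $K$, restricted to $\mu_I\inv(\theta_I^0) \cap \mu_J\inv(\theta_J^0)$, produces a neighborhood $V_K$ of $\theta_K^0$. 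Composing the three maps gives the desired diffeomorphism $f$ over $V := V_I \times V_J \times V_K$, and $U_v$-equivariance at each step ensures the diagram descends to quotients.

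The main obstacle is the reduction from the combinatorial regularity $H_v^{\reg}$ to the stability-theoretic regularity $B^{\reg}$ needed to invoke Theorem \ref{th_intro_moment} three times. One must show that after restricting to $\mu_I\inv(\theta_I^0)$, the parameter $\theta_J^0$ still lies in the regular locus of $\mu_J$ restricted to that level set, and likewise at the last stage. Concretely, this amounts to ruling out proper subrepresentations for which $\mu_I$, $\mu_J$, $\mu_K$ could simultaneously take central values compatible with $(\theta_I^0,\theta_J^0,\theta_K^0)$, which is precisely what the exclusion of the walls $H_w$ for $w < v$ in the definition of $H_v^{\reg}$ provides; routine but careful bookkeeping with the generalized King criterion from \ref{subsect_King} then closes this gap.
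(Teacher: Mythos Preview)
Your proposal is correct and follows essentially the same route as the paper: iterate the single-complex-structure trivialization of Section~\ref{sect_kempf} three times, using that $\exp(I\cdot Y)$ lies in the $I$-complexification $G_v$ and therefore preserves the $I$-holomorphic moment map $\mu_J+i\mu_K$ (and cyclically for $J$, $K$), so each step lands in the right fiber. The only notable difference is that the paper invokes Proposition~\ref{remark_y_theta_x} and Lemma~\ref{lemma_diff} directly on the ambient space $\Rep(\widetilde{\Gamma},v)$ rather than the full Theorem~\ref{theorem_triviality}, and verifies the needed stability/trivial-stabilizer condition via Maffei's surjectivity of $d\mu_{\mathbb{H}}$ on $H_v^{\reg}$ (the lemma preceding the theorem) instead of reducing to $B^{\reg}$ via the King criterion as you outline.
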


A similar trivialization of the hyperkähler moment map over lines is described in \cite[Lemma 2.3.3]{CB_VDB}. In Theorem \ref{th_cbvdb} we provide an extension of their result using twistor spaces as suggested by Nakajima.

Denote by $\pi$ the map obtained by taking quotient of the hyperkähler moment map over the regular locus
\[
\mu_{\mathbb{H}} \inv(H_v^{\reg})/U_v\xrightarrow{\pi} H_v^{\reg}.
\]
Consider $\mathcal{H}^i \pi_* \qlbar$, the cohomology sheaves of the derived pushforward of the constant sheaf. As a direct corollary of the local triviality of the hyperkähler moment map, those sheaves are locally constant. Moreover as $H_v^{\reg}$ is simply connected, those sheaves are constant. They provide the local system of the cohomology of the fibers.

\subsection*{Aknowledgement} I am very grateful to Hiraku Nakajima for his very useful comments and suggestions and for clarifying many historical facts. I want to thank Michèle Vergne for interesting discussions about the paper. The motivation of this work originated in discussions with Andrea Maffei about his paper \cite{Maffei}. I want to thank Florent Schaffhauser and Andrea Maffei for their very detailed and helpful proofreading. This work is part of my PhD thesis at Université de Paris and Scuola Internazionale Superiore di Studi Avanzati. I am thankful to my advisors Emmanuel Letellier and Fernando Rodriguez-Villegas for introducing me to quiver varieties and for their support.

\section{Kempf-Ness theory for affine varieties}\label{sect_kempf}
Kempf-Ness \cite{kempf_ness} relate geometric invariant theory quotients to symplectic quotients. In this section we recall Migliorini's version of this theory \cite{migliorini} which applies to affine varieties and real parameter $\chi^{\theta}\in\mathcal{X}^*(G)^{\mathbb{R}}$. Then we prove that the real moment map is a locally trivial fibration over a regular locus.

$G$ is a connected reductive group acting on a smooth affine variety $X$. The action is assumed to have a trivial kernel.
\subsection{Characterization of semistability from a differential geometry point of view}
For $\chi^{\theta}\in \mathcal{X}^*(G) $ a linear character of $G$, a regular function $f\in\C\left[X\right]$ is $\theta$-equivariant if there exists a strictly positive integer $r$ such that $f(g.x)=\chi^\theta(g)^r f(x)$ for all $x\in X$.
\begin{definition}\label{def_stab}
A point $x\in X$ is $\theta$-semistable if there exists a $\theta$-equivariant regular function $f$ such that $f(x)\ne 0$. The set of $\theta$-semistable points is denoted by  $X^{\theta\text{-ss}}$.

A point $x\in X$ is $\theta$-stable if it is $\theta$-semistable and if its orbit $G.x$ is closed in $X^{\theta\text{-ss}}$ and its stabilizer is finite. The set of $\theta$-stable points is denoted by  $X^{\theta\text{-s}}$.
\end{definition}
The GIT quotient as defined by Mumford \cite{mumford_git} is denoted by  $X^{\theta\text{-ss}}\to X^{\theta\text{-ss}}//G$. A point of this quotient represents a closed $G$-orbit in $X^{\theta\text{-ss}}$. When working over the field of complex numbers, such quotients are related to symplectic quotients. The affine variety $X$ can be embedded as a closed subvariety of an hermitian space $W$ with hermitian pairing denoted by  $p(\dots,\dots)$. The embedding can be chosen so that the action of $G$ on $X$ comes from a linear action on $W$ and the action of a maximal compact subgroup $H\subset G$ preserves the hermitian pairing, $p(h.u,h.v)=p(u,v)$ for all $h\in H$ and $u,v\in W$. Then $G$ can be identified with a subgroup of $\GL(W)$. The hermitian pairing induces a symplectic form on the underlying real space 
\begin{equation}\label{symp_herm}
\omega(\dots,\dots):=\Ree p(i\dots,\dots)
\end{equation}
with $i$ a square root of $-1$ and $\Ree$ the real part. The hermitian pairing on the ambient space induces an hermitian metric on $X$. As $X$ is a smooth subvariety of $W$, its tangent space is stable under multiplication by $i$, hence the non-degeneracy of the hermitian metric implies the non degeneracy of the restriction of the symplectic form $\omega$ to the tangent space of $X$ and the symplectic form on $W$ restricts to a symplectic form on $X$. Then the action of $G$ on $X$ induces a symplectic action of $H$ on $X$.

For $x\in X$ introduce the Kempf-Ness map
\begin{equation*}
    \map{ \phi^{\theta,x} }{ G }{ \RR }{ g }{ ||g.x||^2-\log\left(|\chi^\theta(g)|^2\right) }
\end{equation*}
with $||\dots||$ the hermitian norm.
\begin{theorem}[\cite{migliorini} Theorem A.4 ]\label{th_critical_stable}
A point $x_0\in X$ is $\theta$-semistable if and only if there exists in the closure of its orbit a point $x\in \overline{G.x_0}$ such that $\phi^{\theta,x}$ has a minimum at the identity.
\end{theorem}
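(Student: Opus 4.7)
The starting move is the polar decomposition $G = H\cdot\exp(i\mathfrak{h})$. For $g = h\exp(iY)$ with $h\in H$ and $Y\in\mathfrak{h}$, unitarity of $h$ gives $\|g.x\|^2 = \|\exp(iY).x\|^2$, while $|\chi^\theta|_H\equiv 1$ and the identification $\langle\theta,Y\rangle = i\,d\chi^\theta(Y)$ yield $d\chi^\theta(iY)=\langle\theta,Y\rangle\in\RR$ and hence
\[
\phi^{\theta,x}(h\exp(iY)) = \|\exp(iY).x\|^2 - 2\langle\theta,Y\rangle.
\]
Thus $\phi^{\theta,x}$ factors through the symmetric space $G/H\cong\exp(i\mathfrak{h})$. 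Differentiating at $Y=0$ and using \eqref{intro_eq_moment}, the first-order critical condition at the identity is exactly $\mu(x)=\theta$; geodesic convexity of the Kempf-Ness function along 1-parameter subgroups of $\exp(i\mathfrak{h})$ then upgrades any critical point to a global minimum. So the minimum condition at $e$ is the analytic avatar of $\mu(x)=\theta$.

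For the implication $(\Leftarrow)$, assume $x\in\overline{G.x_0}$ satisfies $\phi^{\theta,x}(g)\geq\phi^{\theta,x}(e)=\|x\|^2$ for all $g\in G$. The rearrangement $\|g.x\|^2\geq\|x\|^2+\log|\chi^\theta(g)|^2$ prevents the orbit of $x$ from escaping ``to zero'' against the twist $\chi^\theta$. Formalized via the lifted action on $X\times\C$ defined by $g\cdot(y,z)=(g.y,\chi^\theta(g)\inv z)$, this is precisely Mumford's criterion $0\notin\overline{G\cdot(x,1)}$, producing an equivariant $f$ with $f(x)\neq 0$. Since $f(g.x_0)=\chi^\theta(g)^r f(x_0)$, if $f(x_0)$ vanished then $f$ would vanish on $G.x_0$ and by continuity on $\overline{G.x_0}$, contradicting $f(x)\neq 0$. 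Hence $x_0\in X^{\theta\text{-ss}}$.

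For the implication $(\Rightarrow)$, fix $f$ equivariant with $f(x_0)\neq 0$. The identity $|\chi^\theta(g)|^{2r}=|f(g.x_0)|^2/|f(x_0)|^2$ lets us rewrite
\[
\phi^{\theta,x_0}(g) = \|g.x_0\|^2 - \frac{2}{r}\log|f(g.x_0)| + \frac{2}{r}\log|f(x_0)|.
\]
Since $f$ is polynomial, $\|y\|^2 - (2/r)\log|f(y)|$ is bounded below on $X$ (it blows up both at infinity of $X$ and near the zero-set of $f$), so let $c:=\inf_G\phi^{\theta,x_0}$. Pick a minimizing sequence $(g_n)$; the lower bound forces $\|g_n.x_0\|$ to be bounded, so after extraction $g_n.x_0\to x\in\overline{G.x_0}$. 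The translation identity $\phi^{\theta,g.x_0}(h)=\phi^{\theta,x_0}(hg)+\log|\chi^\theta(g)|^2$ at $h=e$ gives $\|g_n.x_0\|^2=\phi^{\theta,x_0}(g_n)+\log|\chi^\theta(g_n)|^2$, so $\log|\chi^\theta(g_n)|^2\to\|x\|^2-c$, a finite limit. Passing to the limit in $\phi^{\theta,g_n.x_0}(h)\geq c+\log|\chi^\theta(g_n)|^2$ yields $\phi^{\theta,x}(h)\geq\|x\|^2=\phi^{\theta,x}(e)$ for every $h\in G$.

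The main obstacle is compactness in the $(\Rightarrow)$ direction: guaranteeing that a minimizing sequence has a subsequential limit at which the character does not degenerate. Pleasantly, this is self-correcting --- the finiteness of $\lim\log|\chi^\theta(g_n)|^2$ forces $f(x)\neq 0$ automatically, so $x$ is itself semistable. Substantively this rests on the polynomial growth of the equivariant function $f$ (which makes $\phi^{\theta,x_0}$ proper modulo stabilizers) and the Kempf-Ness convexity along geodesics in $G/H$ (which promotes critical points to minima); these are exactly the ingredients Migliorini isolates to extend the analytic side of Kempf-Ness to real characters $\chi^\theta\in\mathcal{X}^*(G)^{\RR}$.
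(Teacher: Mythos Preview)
The paper does not prove this statement; it is cited as Theorem~A.4 of \cite{migliorini} and immediately used to motivate Definition~\ref{def_real_stab} for real parameters. The nearest the paper comes is Lemma~\ref{prop_critical_moment}, which establishes by an explicit eigenvalue computation the fact you invoke as ``geodesic convexity'': that a critical point of $\phi^{\theta,x}$ at the identity is necessarily a global minimum.

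Your argument is the standard Kempf--Ness one and is essentially correct. One imprecision worth flagging: in the $(\Leftarrow)$ direction, the relevant criterion for $\theta$-semistability under the lifted action $g\cdot(y,z)=(g.y,\chi^\theta(g)^{-1}z)$ on $X\times\C$ is not ``$0\notin\overline{G\cdot(x,1)}$'' but rather that $\overline{G\cdot(x,1)}$ is disjoint from the entire zero section $X\times\{0\}$. It is this disjointness of closed $G$-invariant sets that allows separation by a $G$-invariant polynomial on $X\times\C$, whose homogeneous components in $z$ then yield the desired $\theta$-equivariant $f$ on $X$. Your inequality $\|g.x\|^2\geq\|x\|^2+\log|\chi^\theta(g)|^2$ does prove exactly this stronger disjointness (a limit point in $X\times\{0\}$ would require $|\chi^\theta(g_n)|\to\infty$ while $g_n.x$ remains bounded, contradicting the bound), so the issue is only in the phrasing, not the substance.
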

\begin{remark}
Let $\mathcal{X}^*(G)^\RR :=\mathcal{X}^*(G)\otimes_{\ZZ}\RR$, the definiton of $\phi^{\theta,x}$ makes sense not only for linear characters but for any $\chi^\theta\in \mathcal{X}^*(G)^\RR$. It provides the following generalization of the definition of $\theta$-semistability and $\theta$-stability for any $\chi^\theta\in\mathcal{X}^*(G)^\RR$. 
\end{remark}
\begin{definition}[Semistable points]\label{def_real_stab}
Let $\chi^\theta\in\mathcal{X}^*(G)^\RR$, a point $x_0$ is $\theta$-semistable if there exists $x\in\overline{G.x_0}$ such that $\phi^{\theta,x}$ has a minimum at the identity.

A point $x_0$ is $\theta$-stable if it is $\theta$-semistable, its orbit is closed in $X^{\theta\text{-ss}}$ and its stabilizer is finite.
\end{definition}
In the following of the article, $\theta$-stability and $\theta$-semistability as well as the notations $X^{\theta\text{-s}}$ and $X^{\theta\text{-ss}}$  always refer to this definition.
\subsection{Correspondence between linear characters and elements in the center of the Lie algebra of $H$}\label{subsect_correspondence_character}
The Lie algebra of $G$ is denoted by  $\mathfrak{g}$ and the real Lie algebra of $H$ is $\mathfrak{h}$. Fix a non-degenerate scalar product $\left\langle\dots,\dots\right\rangle$ on $\mathfrak{h}$ invariant under the adjoint action.
\begin{proposition}[Polar decomposition]
For all $g\in G$ there exists a unique $(h,Y)\in H\times \mathfrak{h}$ such that $g=h\exp(i Y)$ such an expression is called a polar decomposition. This implies for the Lie algebra $\mathfrak{g}=\mathfrak{h}\oplus i\mathfrak{h}$.
\end{proposition}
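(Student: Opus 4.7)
The plan is to deduce the polar decomposition of $G$ from the classical polar decomposition in the ambient group $\GL(W)$ inherited from the Kempf-Ness embedding. In that embedding $G$ is realised as a closed complex reductive subgroup of $\GL(W)$, and since $H \subset U(W)$ is a maximal compact subgroup of $G$, one has $H = G \cap U(W)$ and, more importantly, $G$ is stable under the unitary adjoint $g \mapsto g^{*}$ (this is the standard statement that the complexification of a compact subgroup $H$ of $U(W)$ is invariant under the Cartan involution).

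In $\GL(W)$, the spectral theorem applied to the positive hermitian operator $g^{*}g$ produces a unique positive hermitian square root $p = (g^{*}g)^{1/2}$, and setting $u = g p^{-1}$ yields the ambient polar decomposition $g = u\,p$ with $u \in U(W)$; writing $p = \exp(A)$ with $A$ the unique hermitian logarithm gives $g = u \exp(A)$. The key step is to show that $p$ (equivalently $A$) lies in $G$ when $g \in G$. Since $G$ is stable under $g \mapsto g^{*}$, we have $g^{*}g \in G$, and functional calculus (approximating $t \mapsto t^{1/2}$ by polynomials on the positive spectrum of $g^{*}g$) shows $p$ lies in the closure of polynomials in $g^{*}g$, hence in $G$. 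The same argument applied to $t \mapsto t^{r}$ yields $\exp(rA) \in G$ for all rational $r$, and density gives the whole one-parameter subgroup $\{\exp(tA) : t \in \RR\}$ inside $G$, so $A \in \mathfrak{g}$. As $A$ is hermitian, $iA$ is skew-hermitian, so $iA \in \mathfrak{g}\cap \mathfrak{u}(W) = \mathfrak{h}$, giving $A = iY$ with $Y \in \mathfrak{h}$. Then $u = g \exp(-iY) \in G$ is unitary, so $u \in G \cap U(W) = H$.

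Uniqueness descends immediately from uniqueness in $\GL(W)$: any two expressions $g = h_1 \exp(iY_1) = h_2 \exp(iY_2)$ are polar decompositions in $\GL(W)$, forcing $h_1 = h_2$ and $Y_1 = Y_2$. For the Lie algebra statement, differentiating the induced bijection $H \times \mathfrak{h} \to G$, $(h, Y) \mapsto h \exp(iY)$ at $(e, 0)$ yields a linear isomorphism $\mathfrak{h} \oplus i\mathfrak{h} \xrightarrow{\sim} \mathfrak{g}$; the sum is direct since any element in $\mathfrak{h} \cap i\mathfrak{h}$ would be simultaneously skew-hermitian and hermitian, hence zero. The main obstacle is checking that the hermitian factor from the ambient polar decomposition actually lies in $G$; this hinges on the stability of $G$ under the adjoint involution, which is the substantive input coming from $H$ being a maximal compact subgroup of $G$.
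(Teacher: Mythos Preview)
The paper does not prove this proposition; it simply cites a standard reference (Theorem 6.6 of \cite{lie_groups_III}). Your attempt to supply a self-contained argument via the ambient polar decomposition in $\GL(W)$ is a reasonable strategy, but there is a genuine gap at the crucial step.

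The problematic sentence is: ``functional calculus (approximating $t \mapsto t^{1/2}$ by polynomials on the positive spectrum of $g^{*}g$) shows $p$ lies in the closure of polynomials in $g^{*}g$, hence in $G$.'' The conclusion does not follow: $G$ is a \emph{group}, not a subalgebra of $\End(W)$, so expressions like $(g^{*}g)^{2}+(g^{*}g)$ or $c_{0}\Id+c_{1}(g^{*}g)$ have no reason to lie in $G$, and neither do their limits. The same objection applies to your next sentence about $t\mapsto t^{r}$ for rational $r$. What you actually need is that if $p\in G$ is positive hermitian then the whole one-parameter family $p^{t}=\exp(tA)$ stays in $G$; this does \emph{not} follow from approximation by polynomials.

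The standard fix uses that $G\subset\GL(W)$ is Zariski closed. Writing $p$ in diagonal form, each matrix entry of $p^{t}$ is a finite linear combination of exponentials $e^{\mu t}$ with real $\mu$. Hence for any polynomial $f$ vanishing on $G$, the function $t\mapsto f(p^{t})$ is an exponential polynomial; since it vanishes at every integer $t$ (because $p^{n}\in G$), linear independence of the sequences $(e^{\mu n})_{n\in\ZZ}$ for distinct real $\mu$ forces it to vanish identically. This yields $p^{t}\in G$ for all $t\in\RR$, hence $A\in\mathfrak{g}$, after which your argument goes through. Alternatively, one may bypass this step entirely by first establishing $\mathfrak{g}=\mathfrak{h}\oplus i\mathfrak{h}$ from the $*$-stability of $\mathfrak{g}$ and then showing that the smooth map $H\times\mathfrak{h}\to G$, $(h,Y)\mapsto h\exp(iY)$, is an open and closed embedding, hence a diffeomorphism onto $G$; this is essentially the route taken in the reference the paper cites.
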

\begin{proof}
It follows from \cite{lie_groups_III} Theorem 6.6.
\end{proof}

The first step in Kempf-Ness theory is to associate to a character $\chi^\theta\in\mathcal{X}^*(G)$ an element in the center $Z(\mathfrak{h})$ of the Lie algebra $\mathfrak{h}$. As $H$ is compact, its image under a complex character lies in the unit circle. Consider the differential of the character at the identity, it is a $\C$-linear map $d\chi^\theta_{\Id}:\mathfrak{g}\to \C$. The inclusion $\chi^\theta(H)\subset S^1$ implies for the Lie algebra $d\chi^\theta_{\Id}(\mathfrak{h})\subset i \RR$. By $\C$-linearity, $d\chi^\theta_{\Id}(i \mathfrak{h})\subset \RR$ and the following map is $\RR$-linear
\begin{equation}
    \map{d\chi^\theta_{\Id}(i\dots)}{\mathfrak{h}}{\RR}{Y}{d\chi^\theta_{\Id}(i Y)}.
\end{equation}
The invariant scalar product on $\mathfrak{h}$ identifies this linear form with an element of $\mathfrak{h}$ denoted by  $\theta$ satisfying for all $Y\in\mathfrak{h}$
\[
\left\langle \theta ,  Y \right\rangle = i d\chi^\theta_{\Id}( Y ).
\]
Moreover, as the scalar product is invariant for the adjoint action and so is the character $\chi^{\theta}$, the element $\theta$ lies in the center of $\mathfrak{h}$. This construction is $\ZZ$-linear so that it extends to an $\RR$-linear map 
\begin{equation*}
\begin{array}{ccccccc}
      \iota & : & \mathcal{X}^* (G)^\RR & \to & Z(\mathfrak{h}) \\
       & & \chi^\theta & \mapsto & \theta
\end{array}
\end{equation*}
\begin{proposition}
The $\RR$-linear map $\iota$ is an isomorphism from $\mathcal{X}^*(G)^{\RR}$ to $Z(\mathfrak{h})$.
\end{proposition}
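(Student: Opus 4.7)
The plan is to prove that $\iota$ is an isomorphism by first matching the dimensions of the two finite-dimensional $\RR$-vector spaces and then establishing injectivity; since $\iota$ is already $\RR$-linear, this will suffice.

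First I would reduce the computation of $\dim_\RR \mathcal{X}^*(G)^\RR$ to the torus case. Since $G$ is connected reductive, one has the almost direct product decomposition $G = Z(G)^0 \cdot [G,G]$, where $Z(G)^0$ is a complex algebraic torus and $[G,G]$ is semisimple with no nontrivial characters. Any character of $G$ is therefore trivial on $[G,G]$ and is determined by its restriction to $Z(G)^0$, so the restriction map $\mathcal{X}^*(G) \hookrightarrow \mathcal{X}^*(Z(G)^0)$ is injective, and its image has finite index (the obstruction is supported on the finite group $Z(G)^0 \cap [G,G]$). After tensoring with $\RR$, this gives an isomorphism $\mathcal{X}^*(G)^\RR \xrightarrow{\sim} \mathcal{X}^*(Z(G)^0)^\RR$, whose dimension is $\dim_\mathbb{C} Z(G)^0 = \dim_\mathbb{C} Z(\mathfrak{g})$. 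On the other hand, $Z(H)^0$ is the maximal compact subgroup of the complex torus $Z(G)^0$, so its real Lie algebra $Z(\mathfrak{h})$ is a real form of $Z(\mathfrak{g})$ and has real dimension equal to $\dim_\mathbb{C} Z(\mathfrak{g})$. The two dimensions therefore agree.

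Next I would show injectivity. Suppose $\iota(\chi^\theta) = 0$; then by definition of $\iota$ one has $d\chi^\theta_{\Id}(Y) = 0$ for every $Y \in \mathfrak{h}$. For an integral character the differential $d\chi_{\Id}: \mathfrak{g} \to \mathbb{C}$ is $\mathbb{C}$-linear (coming from a holomorphic homomorphism), and this property is preserved under $\RR$-linear combinations. Hence $d\chi^\theta_{\Id}$ vanishes on $\mathfrak{h} \oplus i\mathfrak{h} = \mathfrak{g}$, and in particular on $Z(\mathfrak{g})$. The key remaining point is that the differentiation map
\begin{equation*}
d: \mathcal{X}^*(G) \longrightarrow Z(\mathfrak{g})^*, \qquad \chi \longmapsto d\chi_{\Id}|_{Z(\mathfrak{g})},
\end{equation*}
realises $\mathcal{X}^*(G)$ as a full $\ZZ$-lattice in the complex dual $Z(\mathfrak{g})^*$. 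After the reduction above this is exactly the standard statement that for a complex torus $T$ the map $\mathcal{X}^*(T) \to \mathfrak{t}^*$ is the inclusion of the dual of the kernel lattice of $\exp: \mathfrak{t} \to T$, and in particular its image generates $\mathfrak{t}^*$ as an $\RR$-vector space. Tensoring this injection with $\RR$ remains injective, so $d\chi^\theta_{\Id} = 0$ forces $\chi^\theta = 0$, proving $\iota$ injective. Combined with the dimension equality, $\iota$ is the desired isomorphism.

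The main obstacle I expect is not a conceptual difficulty but the careful bookkeeping in the dimension matching: one has to keep track of the three flavours of ``dimension'' involved (the real rank of $\mathcal{X}^*(G)$, the complex dimension of $Z(G)^0$, and the real dimension of $Z(\mathfrak{h})$) and invoke the decomposition $G = Z(G)^0 \cdot [G,G]$ cleanly so that the finite intersection is harmless after tensoring with $\RR$. Once the reduction to the torus case is made, both injectivity and the dimension count are immediate from the classical description of characters of a complex torus via the kernel lattice of the exponential map.
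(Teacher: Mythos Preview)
Your proof is correct and follows essentially the same strategy as the paper: establish the dimension equality via the decomposition $G = Z(G)^0 \cdot [G,G]$ (the paper writes $G = Z(G)\mathcal{D}(G)$) to reduce to the torus case, and then prove injectivity by using the $\mathbb{C}$-linearity of $d\chi^\theta_{\Id}$ together with $\mathfrak{g} = \mathfrak{h}\oplus i\mathfrak{h}$. Your injectivity argument is in fact slightly more careful than the paper's, which argues ``$d\chi^\theta_{\Id}=0 \Rightarrow \chi^\theta$ trivial'' only for genuine characters (using connectedness of $G$) and leaves implicit the lattice argument needed to pass to $\mathcal{X}^*(G)^{\RR}$ that you make explicit.
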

\begin{proof}
As $G$ is a complex reductive group $G=Z(G) \mathcal{D}(G)$ with $Z(G)$ its center and $\mathcal{D}(G)$ its derived subgroup. Then $\mathcal{X}^*(G)$ identifies with the set of linear characters of the torus $Z(G)$. Hence $\mathcal{X}^*(G)$ is a $\ZZ$-module of rank the complex dimension of $Z(G)$ so that $\dim_{\RR} \mathcal{X}^*(G)^{\RR} = \dim_{\RR} Z(\mathfrak{h})$. It remains to prove that $\iota$ is injective. Let $\chi^{\theta}$ a linear character such that $d\chi^\theta_{\Id}(i Y)=0$ for all $Y\in \mathfrak{h}$. By $\C$-linearity and polar decomposition $d\chi^\theta_{\Id}=0$. Hence for any $g\in G$ the differential at $g$ is also zero $d\chi^\theta_{g}=0$. As $G$ is connected, $\chi^{\theta}$ is the trivial character.
\end{proof}
\begin{remark}
This isomorphism justifies the notation $\chi^{\theta}$ for elements in $\mathcal{X}^*(G)^{\RR}$, such elements are uniquely determined by a choice of $\theta\in Z(\mathfrak{h})$, moreover 
\[
\chi^{\theta}\chi^{\theta'}=\chi^{\theta+\theta'}.
\]
\end{remark}
\subsection{Correspondence between symplectic quotient and GIT quotient}
\begin{definition}[Real moment map]\label{def_moment}
The real moment map $\mu : X\to \mathfrak{h}$ is defined thanks to the invariant scalar product $\left\langle\dots,\dots\right\rangle$ by
\begin{equation*}
    \left\langle \mu(x),Y\right\rangle=\frac{1}{2}\left.\frac{d}{dt} ||\exp (i t Y).x||^2\right|_{t=0}
\end{equation*}
for all $Y\in\mathfrak{h}$ and $x\in X$. In this section the real moment map is just called the moment map. Later on complex and hyperkähler moment maps are also considered.
\end{definition}
\begin{example}\label{example_torus}
Assume the compact group $H$ is a torus $T$. The ambient space decomposes as an orthogonal direct sum $W=\bigoplus_{\chi^{\alpha}} W_{\chi^{\alpha}}$ with $\chi^{\alpha}$ linear characters of $T$ and
\[
W_{\chi^{\alpha}} = \left\lbrace x\in W \left|  t.x=\chi^{\alpha} (t) w \text{ for all } t\in T  \right.  \right\rbrace
\]
Similarly to \ref{subsect_correspondence_character},  a character $\chi^{\alpha}$ is uniquely determined by an element $\alpha$ in $\mathfrak{t}$ the Lie algebra of $T$ such that
\[
i d\chi_{\Id}^{\alpha}( Y ) = \left\langle \alpha , Y \right\rangle.
\]
Let $A$ the finite subset of elements $\alpha\in\mathfrak{t}$ such that $W_{\chi^{\alpha}} \ne \left\lbrace0\right\rbrace$. Let us compute $\mu_T$ the moment map for the torus action. Let $x = \sum_{\alpha\in A} x_{\chi^{\alpha}}$ in $W$, for $Y$ in $\mathfrak{t}$ the Lie algebra of $T$
\begin{eqnarray*}
\left\langle \mu_T (x) , Y \right\rangle & = &  \frac{1}{2}\left.\frac{d}{dt} ||\exp (i t Y).x||^2\right|_{t=0}  \\
& = &  \sum_{\alpha\in A} i d\chi^{\alpha}_{\Id}(Y)   \left|\left| x_{\chi^{\alpha}}  \right|\right|^2 \\
& = &  \left\langle \sum_{\chi\in A}\left|\left| x_{\chi^{\alpha}}\right|\right|^2  \alpha  , Y \right\rangle 
\end{eqnarray*}
Therefore the non-degeneracy of the scalar product implies  $\mu_T (x)=\sum_{\chi\in A}\left|\left| x_{\chi^{\alpha}}\right|\right|^2  \alpha$. In particular the image of $\mu_T$ is the cone $C(A)\subset \mathfrak{t}$ spanned by positive coefficients combinations of elements $\alpha\in A$. This example proves to be useful later on.
\end{example}
\begin{proposition}[Guillemin-Sternberg \cite{Guillemin}]\label{prop_surj}
$d_x \mu$ the differential of the moment map at $x$ is surjective if and only if the stabilizer of $x$ in $H$ is finite.
\end{proposition}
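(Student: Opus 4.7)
\emph{Plan.} The plan is to reduce the statement to the familiar duality between surjectivity of $d_x\mu$ and injectivity of the infinitesimal action map $\mathfrak{h}\to T_xX$, $Y\mapsto Y^{\#}_x:=\left.\frac{d}{dt}\exp(tY).x\right|_{t=0}$, and then invoke compactness of $H$.

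First I would establish the standard moment-map identity
\[
\langle d_x\mu(v),Y\rangle=2\,\omega_x(Y^{\#}_x,v),\qquad v\in T_xX,\ Y\in\mathfrak{h}.
\]
Starting from Definition~\ref{def_moment}, a direct computation gives $\langle\mu(x),Y\rangle=\Ree\,p(iY.x,x)$. Differentiating in $v\in T_xX$ yields $\Ree\,p(iY.v,x)+\Ree\,p(iY.x,v)$; since $Y\in\mathfrak{h}$ acts by a skew-Hermitian endomorphism of $W$ (because $H$ acts unitarily), $iY$ acts Hermitianly and the two terms coincide, producing $2\,\Ree\,p(iY.x,v)=2\,\omega_x(Y^{\#}_x,v)$ by~\eqref{symp_herm}.

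Next I would use the invariant scalar product to identify $\mathfrak{h}^{*}\cong\mathfrak{h}$. Then $d_x\mu\colon T_xX\to\mathfrak{h}$ is surjective if and only if its transpose $Y\mapsto\langle d_x\mu(\cdot),Y\rangle$ is injective, that is, if and only if
\[
K_x:=\{Y\in\mathfrak{h}\mid \omega_x(Y^{\#}_x,v)=0\text{ for all }v\in T_xX\}=0.
\]
Since $Y^{\#}_x\in T_xX$ (the action preserves $X$) and $\omega$ restricts to a non-degenerate form on $T_xX$ (as recalled just after \eqref{symp_herm}), $K_x$ coincides with the kernel of the infinitesimal action $Y\mapsto Y^{\#}_x$, which is precisely $\mathrm{Lie}(\Stab_H(x))$.

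Finally I would conclude: $\Stab_H(x)$ is a closed subgroup of the compact Lie group $H$, hence is itself a compact Lie group, and therefore is finite if and only if its Lie algebra is trivial. Combined with the previous step this yields the claimed equivalence. There is no serious obstacle here; the only point requiring care is the bookkeeping of conventions (the factor $2$ and the sign coming from $\omega(u,v)=\Ree\,p(iu,v)$), after which the argument is essentially formal.
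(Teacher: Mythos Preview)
Your argument is correct and essentially coincides with the paper's proof: both derive the identity $\langle d_x\mu(v),Y\rangle=c\,\omega_x(Y^{\#}_x,v)$ and use non-degeneracy of $\omega$ on $T_xX$ to identify the annihilator of the image of $d_x\mu$ with the Lie algebra of $\Stab_H(x)$. You are in fact a bit more careful than the paper, both in tracking the constant (the paper writes the identity without the factor $2$) and in spelling out why, for a closed subgroup of a compact group, triviality of the Lie algebra is equivalent to finiteness.
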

\begin{proof}
A computation using the definition of the moment map and the symplectic form gives for $v\in T_x X$ a tangent vector at $x$ and $Y\in\mathfrak{h}$
\[
\left\langle d_x\mu (v), Y  \right\rangle = \omega\left(\left.\frac{d}{d t} \exp(t Y).x\right|_{t=0}, v \right).
\]
This relation is often taken as a definition of the moment map. By non degeneracy of the symplectic form $\omega$ it implies that $Y$ is orthogonal to the image of $d_x \mu$ if and only if the stabilizer of $x$ contains $\exp(t Y)$ for all $t\in\RR$. Hence the differential of the moment map is surjective if and only if the stabilizer of $x$ is finite.
\end{proof}

\begin{lemma}\label{prop_critical_moment}
Let $\chi^\theta\in\mathcal{X}^*(G)^\RR$ and $x\in X$, then $\phi^{\theta,x}$ has a minimum at the identity if and only if $\mu(x)=\theta$. 

Moreover if $\phi^{\theta,x}$ has a minimun at the identity and at a point $h \exp(i Y)$ with $h\in H$ and $Y\in\mathfrak{h}$, then $\exp(i Y).x=x$. 
\end{lemma}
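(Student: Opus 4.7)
The plan is to parametrize $G$ via the polar decomposition $g = h\exp(iY)$ and to exploit the left $H$-invariance of $\phi^{\theta,x}$, which follows from the unitarity of the $H$-action on $W$ together with $|\chi^\theta(h)|=1$ for $h\in H$. This reduces both statements to analysis of the one-variable curves $t \mapsto \phi^{\theta,x}(\exp(itY))$ for $Y\in\mathfrak{h}$. A direct computation, using Definition \ref{def_moment} for the first term and, from Section \ref{subsect_correspondence_character}, the identity $\log|\chi^\theta(\exp(itY))|^2 = 2t\langle\theta,Y\rangle$ for the second, gives
\[
\frac{d}{dt}\phi^{\theta,x}(\exp(itY))\Big|_{t=0} = 2\langle\mu(x)-\theta, Y\rangle.
\]

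The key structural input is that the curve $t \mapsto \phi^{\theta,x}(\exp(itY))$ is convex. Indeed, since $Y\in\mathfrak{h}$ acts skew-Hermitianly on the ambient hermitian space $W$, the operator $iY$ is Hermitian and produces an orthogonal eigenspace decomposition $W = \bigoplus_\alpha W_\alpha$ with $\alpha\in\RR$. Writing $x = \sum_\alpha x_\alpha$ one obtains $\|\exp(itY).x\|^2 = \sum_\alpha e^{2t\alpha}\|x_\alpha\|^2$, which is convex, while the log term is affine in $t$.

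From here the first assertion follows in both directions. If $\mu(x)=\theta$ the vanishing of the derivative on every convex curve makes $\Id$ a global minimum along $\exp(i\mathfrak{h})$, and left $H$-invariance together with polar decomposition extends this to a global minimum on $G$; conversely a minimum at $\Id$ forces $\langle\mu(x)-\theta,Y\rangle = 0$ for all $Y\in\mathfrak{h}$, hence $\mu(x)=\theta$ by non-degeneracy of the scalar product. For the second assertion, left $H$-invariance turns the minimum at $h\exp(iY)$ into one at $\exp(iY)$, so the convex function $f(t) := \phi^{\theta,x}(\exp(itY))$ attains its minimum at both $t=0$ and $t=1$ and is therefore constant on $[0,1]$. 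Differentiating twice yields $f''(t) = 4\sum_\alpha \alpha^2 e^{2t\alpha}\|x_\alpha\|^2$, whose vanishing on $(0,1)$ forces $\alpha x_\alpha = 0$ for every $\alpha$. Hence $iY.x = 0$ and $\exp(iY).x = x$. I expect the only delicate step to be setting up the convexity cleanly, but it comes essentially for free once the spectral decomposition of $iY$ in the ambient hermitian space is invoked.
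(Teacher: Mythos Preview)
Your proof is correct and follows essentially the same approach as the paper: both reduce via left $H$-invariance and polar decomposition to the one-parameter curves $t\mapsto\phi^{\theta,x}(\exp(itY))$, compute the derivative at $t=0$ as $2\langle\mu(x)-\theta,Y\rangle$, and use the spectral decomposition of the hermitian operator $iY$ on $W$ to control the norm term. The only difference is packaging: where you invoke convexity of $f(t)$ and argue via $f''\equiv 0$ on $[0,1]$, the paper computes $\phi^{\theta,x}(h\exp(iY))-\phi^{\theta,x}(\Id)$ directly, substitutes the critical-point condition to kill the linear part, and reads off $\sum_j(\exp(2\lambda_j)-2\lambda_j-1)|p(e_j,x)|^2\ge 0$ with equality iff $\exp(iY).x=x$. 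One minor remark: the affinity of $\log|\chi^\theta(\exp(itY))|^2$ that you cite from Section~\ref{subsect_correspondence_character} is stated explicitly only later as Lemma~\ref{lemma_log_chi}, though it is an immediate consequence of the homomorphism property and $\RR$-linear extension, so no circularity arises.
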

\begin{proof}
Up to a shift in the definition of the moment map, this result is \cite[Corollary A.7]{migliorini}. The proof is recalled as it is useful for next proposition.

For all $h\in H$ and $g\in G$ \[\phi^{\theta,x}(h g)=\phi^{\theta,x}(g)\]
so that the differential of $\phi^{\theta,x}$ at the identity vanishes on $\mathfrak{h}$. For $Y'+ i Y\in\mathfrak{h}\oplus i \mathfrak{h}$ this differential is
\begin{eqnarray*}
d\phi^{\theta,x}_{\Id} (Y'+i Y)=d\phi^{\theta,x}_{\Id} (i Y) &=&  \left.\frac{d}{dt} ||\exp (i t Y).x||^2\right|_{t=0} - d\chi^\theta_{\Id}(i Y)-\overline{d\chi^\theta_{\Id}(i Y)} \\
&=& 2\left\langle\mu(x),Y\right\rangle - 2\left\langle\theta,Y\right\rangle.
\end{eqnarray*}
last equality follows from the definition of the moment map $\mu$ and the discussion in \ref{subsect_correspondence_character} defining $\theta$ and proving the reality of $d\chi^\theta_{\Id}(i Y)$.

So far we proved that $\phi^{\theta,x}$ has a critical point at the identity if and only if $\mu(x)=\theta$, it remains to prove that this critical point is necessarily a minimum. Let $\phi^{\theta,x}$ be critical a the identity and $g\in G$ written in polar form $g=h\exp(i Y)$. The action of $i Y$ is hermitian so that it can be diagonalized in an orthonormal basis $(e_j)$ such that $i Y.e_j=\lambda_j e_j$ with $\lambda_j \in \RR$.
\begin{eqnarray*}
    \phi^{\theta,x}(h\exp(i Y))-\phi^{\theta,x}(\Id) =& \phi^{\theta,x}(\exp(i Y))-\phi^{\theta,x}(\Id)& \\
    =& \sum_j \left|\exp(\lambda_j) p(e_j,x)\right|^2 &-\log\left(\prod_j \exp(2 r_j\lambda_j)\right)\\
    & & -\sum_j \left|p(e_j,x)\right|^2 
\end{eqnarray*}
with $r_j$ real parameters determined by $\chi^\theta\in \mathcal{X}^*(G)^\RR$. As $\phi^{\theta,x}$ is critical at the identity:
\begin{equation*}
    0=\left.\frac{d}{dt} \phi^{\theta,x}\left(exp(i t Y)\right)\right|_{t=0}=\sum_j\left(2\lambda_j \left|p(e_j,x)\right|^2 -2 r_j  \lambda_j\right).
\end{equation*}
Combining the two previous equations
\begin{equation*}
    \phi^{\theta,x}(h\exp(i Y))-\phi^{\theta,x}(\Id) = \sum_j (\exp(2\lambda_j)-2\lambda_j -1)\left|p(e_j,x)\right|^2.
\end{equation*}
So that $\phi^{\theta,x}(h\exp(i Y))-\phi^{\theta,x}(\Id)\ge 0$ with equality if and only if $\exp(i Y).x=x$. Hence when $\phi^{\theta,x}$ has a critical point at the identity, it is necessarily a minimum.

\end{proof}
\begin{proposition}\label{corollary_orbit}
Let $\chi^\theta\in\mathcal{X}^*(G)^\RR$ then $\mu\inv(\theta)\subset X^{\theta\text{-ss}}$. Moreover, a point $x_0$ is $\theta$-stable if and only if the orbit $G.x_0$ intersects $\mu\inv(\theta)$ exactly in a $H$-orbit.
\end{proposition}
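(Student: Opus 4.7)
\noindent\emph{Proof plan.} The plan is to reduce both assertions to Lemma~\ref{prop_critical_moment}, which identifies $\mu(x)=\theta$ with the Kempf-Ness map $\phi^{\theta,x}$ attaining a minimum at the identity, together with Definition~\ref{def_real_stab}.

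The inclusion $\mu\inv(\theta)\subset X^{\theta\text{-ss}}$ is immediate: if $\mu(x)=\theta$, then Lemma~\ref{prop_critical_moment} gives a minimum of $\phi^{\theta,x}$ at $\Id$, and since $x\in\overline{G.x}$ trivially, this matches the criterion of Definition~\ref{def_real_stab}.

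For the forward direction of the second statement, I would first apply Definition~\ref{def_real_stab} to $x_0$ to obtain $y\in\overline{G.x_0}$ with $\phi^{\theta,y}$ minimal at $\Id$, so $\mu(y)=\theta$, and the first part of the proposition gives $y\in X^{\theta\text{-ss}}$. Closedness of $G.x_0$ in $X^{\theta\text{-ss}}$ then forces $y\in G.x_0$, so the intersection $G.x_0\cap\mu\inv(\theta)$ is non-empty. To show it is a single $H$-orbit, I would take $y_1,y_2\in G.x_0\cap\mu\inv(\theta)$, write $y_2=g.y_1$ with polar decomposition $g=h\exp(iY)$, and exploit the translation identity
\[
\phi^{\theta,y_2}(k) = \phi^{\theta,y_1}(kg) + \log|\chi^\theta(g)|^2
\]
to conclude that $\phi^{\theta,y_1}$ attains a minimum at $g$ as well as at $\Id$. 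The second assertion of Lemma~\ref{prop_critical_moment} then yields $\exp(iY).y_1=y_1$, whence $y_2=h.y_1\in H.y_1$.

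For the reverse direction, $\theta$-semistability is immediate from the first part applied to any point of $G.x_0\cap\mu\inv(\theta)$. Closedness of $G.x_0$ in $X^{\theta\text{-ss}}$ I would prove by contradiction: any boundary point $y\in\overline{G.x_0}\cap X^{\theta\text{-ss}}$ with $y\notin G.x_0$ is itself $\theta$-semistable, so Definition~\ref{def_real_stab} produces $y'\in\overline{G.y}\subset\overline{G.x_0}$ with $\mu(y')=\theta$, and combining $\mu(y')=\theta$ with the single-$H$-orbit hypothesis should give the required contradiction. Finiteness of the stabilizer I would obtain by applying Proposition~\ref{prop_surj} at a point $x$ of the intersection, combined with the second assertion of Lemma~\ref{prop_critical_moment}, which via polar decomposition identifies $\Stab_G(x)$ with the complexification of $\Stab_H(x)$. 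The main obstacle is precisely this closedness step: one must promote the limit $y'\in\overline{G.x_0}\cap\mu\inv(\theta)$ into $G.x_0$, which rests on the standard Kempf-Ness fact that any $G$-orbit meeting $\mu\inv(\theta)$ is automatically closed in $X^{\theta\text{-ss}}$, itself deducible from the minimum property of $\phi^{\theta,y'}$ and the polar-decomposition argument already used for uniqueness.
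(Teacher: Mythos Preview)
Your overall strategy coincides with the paper's: both reduce everything to Lemma~\ref{prop_critical_moment}, the translation identity for $\phi^{\theta,\cdot}$, and polar decomposition. The inclusion $\mu^{-1}(\theta)\subset X^{\theta\text{-ss}}$ and the forward implication are handled exactly as in the paper.

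For the converse, two differences are worth noting. First, closedness: the paper does not set up a contradiction but simply cites Migliorini \cite[Proposition~A.9]{migliorini}, which says that any $G$-orbit meeting $\mu^{-1}(\theta)$ is closed in $X^{\theta\text{-ss}}$. Your contradiction argument is extra scaffolding around this same external fact, as you yourself acknowledge; once that fact is granted, nothing further is needed and the paper's one-line citation suffices. Since by hypothesis $G.x_0$ already meets $\mu^{-1}(\theta)$, the detour through a boundary point $y$ and an auxiliary $y'$ is unnecessary.

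Second, and this is a genuine gap, your finiteness argument does not go through as written. Proposition~\ref{prop_surj} is a biconditional (surjectivity of $d_x\mu$ $\Leftrightarrow$ finiteness of $\Stab_H(x)$), and at a point $x\in\mu^{-1}(\theta)$ you have neither side a priori; invoking it to obtain finiteness is circular. The paper instead works directly with the Kempf--Ness map. Taking $Y\in\mathfrak h$ with $\exp(iY)\in\Stab_G(x)$, one has $\|\exp(iY).x\|=\|x\|$, and minimality of $\phi^{\theta,x}$ at the identity forces $|\chi^\theta(\exp(iY))|=1$, since otherwise one of $\phi^{\theta,x}(\exp(\pm iY))$ would lie strictly below $\phi^{\theta,x}(\Id)$. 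From $\phi^{\theta,x}(\exp(iY))=\phi^{\theta,x}(\Id)$ the paper deduces $Y=0$; polar decomposition then gives $\Stab_G(x)\subset H$, a compact complex algebraic subgroup of $G$, hence finite. It is this direct exploitation of the minimum of $\phi^{\theta,x}$, rather than an appeal to Proposition~\ref{prop_surj}, that completes the argument.
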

\begin{proof}
First statement follows from definition of stability \ref{def_real_stab} and Lemma \ref{prop_critical_moment}.

Assume $x_0$ is $\theta$-stable, then its orbit is closed in $X^{\theta\text{-ss}}$ and  $G.x_0\cap\mu\inv(\theta)$ is not empty. Let $x$ lies in this intersection, then $\phi^{\theta,x}$ has a minimum at the identity. For all $g,g'\in G$
\begin{equation*}
    \phi^{\theta,g.x}(g')=\phi^{\theta,x}(g'g)+\log\left(\left|\chi^\theta(g)\right|^2\right)
\end{equation*}
Hence $\phi^{\theta,g.x}(g')$ is minimum for $g'=g\inv$.
Now if $g\in G$ verifies $g.x\in\mu\inv(\theta)$ by Lemma \ref{prop_critical_moment}, $\phi^{\theta,g.x}(g')$ has a minimum not only at $g'=g\inv$ but also at the identity. By the second statement of previous lemma, $g\inv=h\exp(i Y)$ with $h\in H$ and $\exp(i Y).x=x$. As $x$ is stable, its stabilizer is finite so that $\exp(i Y)=\Id$ and $g\inv\in H$. Moreover for any $h\in H$, the map $\phi^{\theta,h.x}$ has a minimum at identity hence $h.x\in\mu\inv(\theta)$ so that $G.x_0 \cap \mu\inv(\theta)=H.x$.

Conversely suppose $G.x_0 \cap \mu\inv(\theta)=H.x $. First $x_0$ is $\theta$-semistable. By Migliorini \cite[Proposition A.9]{migliorini}, the orbit $G.x_0$ is closed in $X^{\theta\text{-ss}}$. It remains to prove that the stabilizer of $x_0$ is finite. By Lemma \ref{prop_critical_moment} the map $\phi^{\theta,x}$ is minimum at the identity. Let $Y\in\mathfrak{h}$ such that $\exp(i Y)$ is in the stabilizer of $x$. Then $\left|\chi^{\theta}\left(\exp(i Y)\right)\right|=1$, otherwise either $\phi^{\theta,x}\left(\exp( i Y)\right)<\phi^{\theta,x}(\Id)$ or $\phi^{\theta,x}\left(\exp(- i Y)\right)<\phi^{\theta,x}(\Id)$. Hence $\phi^{\theta,x}(\exp(i Y))=\phi^{\theta,x}(\Id)$ and $\exp(i Y)\in H$ so that $Y=0$ and the stabilizer of $x$ is finite.
\end{proof}

\begin{remark}\label{remark_kempf_bij}
For $\chi^{\theta}\in\mathcal{X}^*(G)$ such that $\theta$-stability and $\theta$-semistability coincide. Last proposition implies that the inclusion $\mu\inv(\theta)\subset X^{\theta\text{-ss}}$ goes down to a continuous bijective map 
\begin{equation*}
\mu\inv(\theta)/H \xrightarrow{\sim} X^{\theta\text{-ss}} // G.
\end{equation*} 
This result is a particular instance of Kempf-Ness theory, it gives a natural bijection between a symplectic quotient and a GIT quotient. Hoskins \cite{Hoskins} proved that this map is actually an homeomorphism.
\end{remark}

\subsection{Hilbert-Mumford criterion for stability}\label{subsect_HM}
Next theorem is a variant of the usual Hilbert-Mumford criterion for stability. It applies to real parameters $\chi^{\theta}\in \mathcal{X}^*(G)^{\RR}$ not only to to linear characters. Instead of algebraic one-parameter subgroups it relies on one-parameter real Lie groups defined for $Y\in\mathfrak{h}$ by
\[
\begin{array}{ccc}
    \RR & \to & G  \\
     t &\mapsto  & \exp(i t Y)
\end{array}
\]
Many variants of Hilbert-Mumford criterion for one-parameter real Lie groups are given in \cite{GVR_Hilbert}. Before proving the criterion, two classical technical lemmas are necessary.
\begin{lemma}\label{lemma_log_chi}
Let $\chi^{\theta}\in \mathcal{X}^*(G)^{\RR}$ and $Y\in\mathfrak{h}$, for $t\in \RR$
\[
\log\left|\chi^{\theta}\left(\exp(i t Y )\right)\right|^2= 2 \left\langle \theta, Y\right\rangle t.
\]
\end{lemma}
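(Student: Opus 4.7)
The plan is to reduce to the case of a genuine integer character and then use the fact that the restriction of such a character to a one-parameter subgroup is determined by its differential at the identity.

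First, both sides of the identity are $\RR$-linear in $\theta\in Z(\mathfrak{h})$ (equivalently, in $\chi^\theta\in\mathcal{X}^*(G)^\RR$), where on the left-hand side the quantity $\log|\chi^\theta(g)|^2$ for a real character is by definition the $\RR$-linear extension of $\log|\chi^\theta(g)|^2$ from the $\ZZ$-module $\mathcal{X}^*(G)$, and on the right-hand side linearity follows from the definition of $\iota$ in \ref{subsect_correspondence_character}. Since $\mathcal{X}^*(G)^\RR=\mathcal{X}^*(G)\otimes_\ZZ\RR$, it suffices to verify the formula when $\chi^\theta$ is a genuine linear character $G\to\C^*$.

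Second, assuming $\chi^\theta\in\mathcal{X}^*(G)$, I would consider the map $\psi:\RR\to\C^*$ defined by $\psi(t):=\chi^\theta(\exp(itY))$. This is a smooth group homomorphism from $(\RR,+)$ to $(\C^*,\cdot)$, and by the elementary classification of such homomorphisms it is determined by its derivative at $0$:
\[
\psi(t)=\exp\bigl(t\, d\chi^\theta_{\Id}(iY)\bigr).
\]
Taking the squared modulus and the logarithm yields
\[
\log|\chi^\theta(\exp(itY))|^2 = 2t\,\Ree\bigl(d\chi^\theta_{\Id}(iY)\bigr).
\]

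Third, I would identify this real number with $\langle\theta,Y\rangle$ using the discussion of \ref{subsect_correspondence_character}. By $\C$-linearity of $d\chi^\theta_{\Id}$ one has $d\chi^\theta_{\Id}(iY)=i\,d\chi^\theta_{\Id}(Y)$. Since $\chi^\theta(H)\subset S^1$ implies $d\chi^\theta_{\Id}(Y)\in i\RR$, the quantity $d\chi^\theta_{\Id}(iY)$ is already real, so $\Ree\bigl(d\chi^\theta_{\Id}(iY)\bigr)=d\chi^\theta_{\Id}(iY)=i\,d\chi^\theta_{\Id}(Y)=\langle\theta,Y\rangle$ by the very definition of the element $\theta$ attached to the character $\chi^\theta$. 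Substituting back gives the claimed identity.

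There is no real obstacle here; the lemma is essentially a bookkeeping statement unwinding the definition of the correspondence $\iota$. The only mild subtlety is checking that the formula makes sense for $\chi^\theta\in\mathcal{X}^*(G)^\RR$, which is handled by $\RR$-linearity in the first step.
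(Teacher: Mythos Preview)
Your proof is correct and follows essentially the same approach as the paper: both reduce to genuine characters by $\RR$-linearity, then exploit that $t\mapsto\chi^\theta(\exp(itY))$ is a one-parameter subgroup of $\C^*$ determined by its derivative at the identity. The only cosmetic difference is that the paper computes the derivative of $\log|\chi^\theta(\exp(itY))|^2$ at an arbitrary $s$, shows it is the constant $2\langle\theta,Y\rangle$, and integrates, whereas you invoke the classification of smooth homomorphisms $\RR\to\C^*$ directly; the content is the same.
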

\begin{proof}
We prove it for $\chi^{\theta}\in \mathcal{X}^*(G)$ and deduce for elements in $\mathcal{X}^*(G)^{\RR}$ by $\RR$-linearity.
\begin{eqnarray*}
\left.\frac{d}{d t}\right|_{t=s} \log\left|\chi^{\theta}\left(\exp(i t Y )\right)\right|^2 &= & \frac{1}{ \left|\chi^{\theta}\left(\exp(i s Y )\right)\right|^2} \left.\frac{d}{d t}\right|_{t=s}\left|\chi^{\theta}\left(\exp(i t Y )\right)\right|^2 \\
& = &\left.\frac{d}{d t}\right|_{t=s}\left|\chi^{\theta}\left(\exp(i (t-s) Y )\right)\right|^2 \\
& = & \left.\frac{d}{d t}\right|_{t=0}\left|\chi^{\theta}\left(\exp(i t Y )\right)\right|^2 \\
& = & 2 d \chi^{\theta}_{\Id}(i Y)
\end{eqnarray*}
By the construction of the element $\theta\in Z(\mathfrak{h})$ from \ref{subsect_correspondence_character} we conclude that
\[
\left.\frac{d}{d t}\right|_{t=s} \log\left|\chi^{\theta}\left(\exp(i t Y )\right)\right|^2=  2\left\langle\theta,Y\right\rangle
\]
and
\[
\log\left|\chi^{\theta}\left(\exp(i t Y )\right)\right|^2 = 2 \left\langle \theta, Y\right\rangle t .
\]
\end{proof}

\begin{lemma}\label{lemma_f_Z}
Let $x_0\in X^{\theta\text{-s}}$ such that $\phi^{\theta,x_0}$ is minimum at the identity. Let $Z\in\mathfrak{h}$ and decompose $x_0$ in a basis of eigenvectors of the hermitian endomorphism $i Z$
\[
x_0=\sum_{\lambda} x^0_{\lambda}
\]
with
\[
\exp(i Z) x^0_{\lambda} = \exp(\lambda) x^0_{\lambda}.
\]
Then either $\left\langle \theta, Z\right\rangle <0$ or there exists $\lambda>0$ with $x^0_{\lambda}\ne 0$.
\end{lemma}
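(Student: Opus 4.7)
The plan is to argue by contradiction, testing the Kempf--Ness functional $\phi^{\theta, x_0}$ along the one-parameter subgroup $t \mapsto \exp(i t Z)$. Taking $Z \ne 0$ as the meaningful case, suppose both alternatives in the conclusion fail: $\langle \theta, Z \rangle \geq 0$ and every eigenvalue $\lambda$ with $x_\lambda^0 \ne 0$ satisfies $\lambda \leq 0$. I will show this contradicts the assumption that $\phi^{\theta, x_0}$ is minimized at the identity.

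Since $i Z$ is hermitian, the diagonalizing basis $(e_j)$ can be chosen orthonormal, and the norm computation used in the proof of Lemma \ref{prop_critical_moment} adapts immediately to give
\[
||\exp(i t Z).x_0||^2 = \sum_\lambda \exp(2 \lambda t) \, ||x_\lambda^0||^2.
\]
Combining this with Lemma \ref{lemma_log_chi} yields
\[
\phi^{\theta, x_0}(\exp(i t Z)) = \sum_\lambda \exp(2 \lambda t)\, ||x_\lambda^0||^2 - 2 t \langle \theta, Z \rangle.
\]

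If $\langle \theta, Z \rangle > 0$, the sum remains bounded as $t \to +\infty$ because every exponent $2 \lambda t$ is non-positive, while $-2 t \langle \theta, Z \rangle \to -\infty$, so $\phi^{\theta, x_0}(\exp(i t Z)) \to -\infty$. This contradicts the minimum at the identity, and the strict case is dispatched easily.

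The delicate case is $\langle \theta, Z \rangle = 0$: the function is now a sum of non-increasing exponentials on $[0, +\infty)$ bounded above by its value $\phi^{\theta, x_0}(\Id) = ||x_0||^2$ at $t = 0$. The minimum property forces it to be constantly equal to $||x_0||^2$, which by linear independence of the exponentials $\exp(2 \lambda t)$ with distinct $\lambda$ forces $||x_\lambda^0||^2 = 0$ for every $\lambda < 0$. Together with the ruling out of $\lambda > 0$, this gives $i Z . x_0 = 0$, so $\{\exp(i t Z) : t \in \RR\}$ sits inside $\Stab(x_0)$; since $x_0$ is $\theta$-stable, this stabilizer is finite, and a connected one-parameter subgroup of a finite group is trivial, forcing $Z = 0$, a contradiction. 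The main obstacle will be precisely this equality case, whose resolution depends essentially on the finiteness of the stabilizer granted by $\theta$-stability.
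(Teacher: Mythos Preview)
Your argument is correct and runs along the same lines as the paper's: both compute $\phi^{\theta,x_0}(\exp(itZ))$ explicitly via Lemma~\ref{lemma_log_chi} and then exploit the $\theta$-stability of $x_0$ (not merely the minimality at the identity) to force the dichotomy. The only organizational difference is that the paper observes $f_Z(t):=\phi^{\theta,x_0}(\exp(itZ))$ is convex with its unique minimum at $t=0$ (since the minimum of $\phi^{\theta,x_0}$ is attained exactly on $H$), hence $f_Z(t)\to+\infty$, and reads off the conclusion from the formula; you reach the same endpoint by a case split on the sign of $\langle\theta,Z\rangle$ and a direct finite-stabilizer argument in the equality case.
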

\begin{proof}
By Lemma \ref{prop_critical_moment} and Proposition \ref{corollary_orbit}, as $x_0$ is $\theta$-stable, the Kempf-Ness map $\phi^{\theta,x_0}$ reaches its minimum exactly on $H$. For $Z\in\mathfrak{h}$ consider the map $f_Z$ defined for $t$ real by
\begin{equation*}
f_Z (t) = \phi^{\theta,x_0}\left(\exp(i Z t) \right).
\end{equation*}
$f_Z$ reaches its minimum only at $t=0$. We can compute $f_Z(t)$ using the decomposition of $x_0$ in eigenvectors of $i Z$ and Lemma \ref{lemma_log_chi}
\begin{equation} \label{f_Z}
f_Z(t) = \sum_{\lambda} \exp(2 t\lambda) \left|\left|x_{\lambda}^0\right|\right|^2-2\left\langle\theta,Z\right\rangle t .
\end{equation}
Its second derivative is 
\[
f''_Z(t)=\sum_{\lambda} 4 \lambda^2 \exp(2 t\lambda) \left|\left|x_{\lambda}^0\right|\right|^2.
\]
Then $f_Z$ is convex, moreover it reaches its minimum only at $t=0$ so that 
\[
\lim_{t\to+\infty} f_Z(t) = +\infty.
\]
Looking at equation \eqref{f_Z} this implies either $\left\langle \theta,Z\right\rangle<0$ or there exists $\lambda>0$ with $x^0_{\lambda}\ne 0$.
\end{proof}

\begin{theorem}[Hilbert-Mumford criterion for stability]\label{theorem_HM}
Let $\theta\in\mathcal{X}^*(G)^{\RR}$ and $x\in X$. The following statements are equivalent
\begin{enumerate}[label=(\roman*)]
\item\label{item:1}$x$ is $\theta$-stable.
\item  For all $Y\in\mathfrak{h}$, different from zero, such that $\lim_{t\to+\infty}\exp(i t Y).x$ exists  then $\left\langle\theta,Y\right\rangle < 0$.
\end{enumerate}
\end{theorem}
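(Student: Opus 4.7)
\textbf{Direction (i) $\Rightarrow$ (ii).} The central tool is the explicit expansion
\[
\phi^{\theta,x}(\exp(itY)) \;=\; \sum_\lambda \exp(2t\lambda)\|x_\lambda\|^2 \,-\, 2\langle\theta,Y\rangle\,t,
\]
obtained as in the proof of Lemma \ref{prop_critical_moment} by decomposing $x = \sum_\lambda x_\lambda$ in an orthonormal eigenbasis of the hermitian operator $iY$; existence of $x_\infty := \lim_{t\to +\infty}\exp(itY).x$ is equivalent to $x_\lambda = 0$ for every $\lambda > 0$, in which case the sum stays bounded as $t \to +\infty$. Assume $x$ is $\theta$-stable and such a $Y \neq 0$ exists. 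By Proposition \ref{corollary_orbit} pick $g_0 \in G$ with $g_0.x \in \mu\inv(\theta)$; the identity $\phi^{\theta,x}(g'g_0) = \phi^{\theta,g_0.x}(g') - \log|\chi^\theta(g_0)|^2$ from the proof of Proposition \ref{corollary_orbit}, combined with Lemma \ref{prop_critical_moment}, shows that $\phi^{\theta,x}$ is bounded below on $G$. In the expansion this rules out $\langle\theta,Y\rangle > 0$. To exclude $\langle\theta,Y\rangle = 0$: then $\phi^{\theta,x}(\exp(itY)) \to \|x_\infty\|^2$, so the coset sequence $\exp(it_n Y)H$ lies in a sublevel set of $\phi^{\theta,x}$; properness of $\phi^{\theta,x}$ on $G/H$ (a standard consequence of $\theta$-stability in Kempf-Ness theory, see Hoskins \cite{Hoskins}) yields a polar decomposition $\exp(it_n Y) = h_n g_n$ with $g_n \to g_\infty \in G$ along a subsequence. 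Hence $x_\infty = g.x$ for some $g \in G$, and $g\inv \exp(it_n Y).x \to x$; the finiteness of $\Stab(x)$ (which makes the orbit map a local diffeomorphism) then forces $\exp(it_n Y)$ itself to converge in $\GL(W)$. But $iY \ne 0$ is a nonzero hermitian operator (the action is faithful), so its spectrum contains a nonzero real number, and consequently $\exp(it_n Y)$ has a spectral value tending to $0$ or $+\infty$, incompatible with convergence to an invertible operator.

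\textbf{Direction (ii) $\Rightarrow$ (i).} Argue by contrapositive. Under (ii), $\Stab(x)$ is finite: any $Y \in \mathfrak{h}\setminus\{0\}$ in the stabilizer Lie algebra gives the trivial limit $\exp(itY).x = x$, so (ii) applied to both $Y$ and $-Y$ would force $\langle\theta,Y\rangle < 0$ and $\langle\theta,-Y\rangle < 0$, a contradiction; using that the stabilizer's Lie algebra $\mathfrak{g}_x$ is a complex (algebraic) subalgebra of $\mathfrak{g}$ together with the polar decomposition $\mathfrak{g} = \mathfrak{h} \oplus i\mathfrak{h}$, the conclusion $\mathfrak{g}_x \cap i\mathfrak{h} = 0$ is promoted to $\mathfrak{g}_x = 0$. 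For $\theta$-semistability, I would study $\psi(Y) := \phi^{\theta,x}(\exp(iY))$ on $\mathfrak{h}$: along each nonzero ray $tY_0$, the same expansion and (ii) yield $\psi(tY_0) \to +\infty$ as $|t|\to\infty$, and this ray condition is upgraded to global properness of $\psi$ by a normalization argument (a hypothetical sequence $Y_n$ with $\|Y_n\|\to\infty$ and $\psi(Y_n)$ bounded produces, after extracting $Z_n = Y_n/\|Y_n\| \to Z_\infty \ne 0$, a direction with $\lim_{t\to +\infty}\exp(itZ_\infty).x$ existing and $\langle\theta,Z_\infty\rangle \ge 0$, contradicting (ii)). Properness yields a minimizer $Y_0$ of $\psi$, Lemma \ref{prop_critical_moment} gives $\mu(\exp(iY_0).x) = \theta$, and Proposition \ref{corollary_orbit} combined with the finite stabilizer concludes that $x$ is $\theta$-stable.

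\textbf{Main obstacle.} The hardest step is upgrading the ray-by-ray growth to genuine properness of $\psi$ in (ii) $\Rightarrow$ (i): the normalization $Z_n = Y_n/\|Y_n\|$ produces a candidate limit direction $Z_\infty$ on the unit sphere of $\mathfrak{h}$, but the eigenspace decomposition of $x$ under the family of hermitian operators $iZ_n$ is only upper-semicontinuous at eigenvalue crossings, so proving that ``no positive weight can appear in the limit direction'' requires a careful semicontinuity argument on the spectral data of $iZ_n$ acting on the fixed vector $x$. Modulo this technical point, the rest of the argument is a direct consequence of (ii) and of the machinery --- Proposition \ref{corollary_orbit} and Lemmas \ref{prop_critical_moment}, \ref{lemma_log_chi}, \ref{lemma_f_Z} --- already developed in this section.
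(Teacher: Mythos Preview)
Your overall strategy --- reduce everything to the explicit expansion of $\phi^{\theta,x}(\exp(itY))$ and a normalization/compactness argument on directions in $\mathfrak h$ --- is exactly the paper's strategy. The paper organises it as ``not (i) $\Rightarrow$ not (ii)'' (produce an unbounded minimizing sequence $\exp(iY_n)$, normalize, extract a bad direction) and ``(i) $\Rightarrow$ (ii)'' (write $\exp(inY)=h_n\exp(iZ_n)g_0$, normalize $Z_n$, and use Lemma~\ref{lemma_f_Z} to force $\phi^{\theta,x}(\exp(inY))\to+\infty$, whence $\langle\theta,Y\rangle<0$). Two points deserve comment.

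\emph{The ``main obstacle'' you flag is the whole point of the paper's proof.} The paper does not try to pass the eigenspace decomposition of $x$ to the limit. Instead it picks, for each $n$, an orthonormal eigenbasis $B^n=(e^n_1,\dots,e^n_d)$ of $iY_n$; by compactness of the set of orthonormal frames it extracts $B^n\to B$, and simultaneously $\lambda^n_k/\Sigma_n\to\lambda_k$, $Y_n/\Sigma_n\to Y$. Then $B$ is automatically an eigenbasis of $iY$ with eigenvalues $\lambda_k$, and the components $x^n_k\to x_k$ are just coordinates in a convergent frame. This completely sidesteps the semicontinuity issue; your sketch is missing precisely this device.

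\emph{Your separate ``finite stabilizer'' step has a genuine gap.} The implication ``$\mathfrak g_x\cap i\mathfrak h=0$ and $\mathfrak g_x$ complex $\Rightarrow \mathfrak g_x=0$'' is false: take $G=\SL_2(\mathbb C)$ acting on $W=\mathbb C^2$ and $x=(1,0)$, so $\mathfrak g_x=\mathbb C\,E_{12}$; no nonzero element of $\mathbb C\,E_{12}$ is hermitian, yet $\mathfrak g_x\neq 0$. The Cartan involution need not preserve $\mathfrak g_x$. The paper avoids this by not isolating the stabilizer: if $x$ is not $\theta$-stable but $\phi^{\theta,x}$ does attain its minimum, then (semistability and closed orbit being automatic) the stabilizer is infinite, $|\chi^\theta|\equiv 1$ on it, and the minimum is reached on the unbounded set $g\cdot\Stab(x)$ --- so in \emph{every} non-stable case one has an unbounded minimizing sequence to feed into the normalization argument. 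Your properness-of-$\psi$ argument already captures this once you drop the separate stabilizer step and run it as a contrapositive.

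Finally, in (i) $\Rightarrow$ (ii) you invoke properness of $\phi^{\theta,x}$ on $H\backslash G$ from the literature. Note that once you have it, the $\langle\theta,Y\rangle=0$ case dies immediately: boundedness of $\phi^{\theta,x}(\exp(it_nY))$ plus properness forces $t_nY$ bounded in $\mathfrak h$, contradicting $t_n\to\infty$, $Y\neq 0$ --- the orbit-closure detour is unnecessary. The paper prefers to stay self-contained for real $\theta$ and instead proves $\phi^{\theta,x}(\exp(inY))\to+\infty$ directly via Lemma~\ref{lemma_f_Z} and the same frame-compactness trick as above.
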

\begin{proof}

\underline{not $(i)$ implies not $(ii)$}

Let $x\in X\setminus X^{\theta\text{-s}}$. Then if $\phi^{\theta,x}$ admits a minimum, the stabilizer of $x$ is not finite and this minimum is reached on an unbounded subset of $G$. Thus there exists an unbounded minimizing sequence for $\phi^{\theta,x}$. By polar decomposition and $H$ invariance we can assume it has the following form $\left(\exp i Y_n \right)_{n\in\mathbb{N}}$ with $(Y_n)_{n\in\mathbb{N}}\in \mathfrak{h}^{\mathbb{N}}$ unbounded. The hermitian space $W$ admits an orthonormal basis $B^n=\left(e^n_1,\dots,e^n_d\right)$ made of eigenvectors of $i Y_n$ with associated eigenvalues $\lambda^n_1,\dots , \lambda^n_d$.
\[
\exp(i Y_n).e^n_k=\exp(\lambda^n_k) e^n_k.
\]
This basis allows to compute:
\[
\phi^{\theta,x}\left(\exp i Y_n\right)=\sum_{k=1}^d \exp\left(2 \lambda^n_k \right) \left|\left|x^n_k\right|\right|^2 - 2 \left\langle \theta, Y_n\right\rangle
\]
with $x_k^n=p(x,e^n_k)e^n_k$ the components of $x$ in the basis $B^n$. By compactness of the set of orthonormal frames, we can assume the sequence of basis $(B^n)_{n\in \NN}$ converges to an orthonormal basis $B=\left(e_1,\dots,e_k\right)$. Let $x_k=p(x,e_k)e_k$ the components of $x$ in the basis $B$. Then $\lim_{n\to +\infty} x^n_k = x_k$. Let
\[
\Sigma_n =\sum_{k=1}^d \left|\lambda_k^n\right| 
\]
As $(Y_n)_{n\in\NN}$ is unbounded, up to an extraction of a subsequence, we can assume that $\lim_{n\to +\infty} \Sigma_n=+\infty $ and that the following limit exist and are finite:
\[
Y:=\lim_{n\to +\infty} \frac{Y_n}{\Sigma_n}
\]
and
\[
\lambda_k:=\lim_{n\to +\infty} \frac{\lambda_k^n}{\Sigma_n}.
\]
Now one can bound from bellow the values $\phi^{\theta,x}\left(\exp i Y_n\right)$ of the minimizing sequence
\begin{eqnarray*}
\phi^{\theta,x}\left(\exp i Y_n\right) &\ge& \sum_{\left\lbrace k\left|x_k\ne 0\right.\right\rbrace} \exp\left(2 \lambda^n_k \right) \left|\left|x^n_k\right|\right|^2  -2 \left\langle \theta, Y_n\right\rangle.  \\
&\ge & \sum_{\left\lbrace k\left|x_k\ne 0\right.\right\rbrace} \exp\left( 2\left(\lambda_k+o(1)\right)\Sigma_n \right) \left(\left|\left|x_k\right|\right|^2+o(1)\right) \\
& & -  2 \left( \left\langle \theta, Y\right\rangle +o(1) \right) \Sigma_n
\end{eqnarray*}
with $o(1)$ some sequences going to zero when $n$ goes to infinity. As the left-hand side is the value of a minimizing sequence, it cannot go to plus infinity. Hence $\left\langle \theta, Y\right\rangle \ge 0$, moreover if $x_k\ne 0$ Then $\lambda_k\le 0$. We conclude as $Y$ satisfies $\lim_{t\to+\infty} \exp(i t Y ).x$ exists and $\left\langle \theta, Y\right\rangle \ge 0$.

\underline{$(i)$ implies $(ii)$}

Let $x\in X^{\theta\text{-s}}$, by Lemma \ref{prop_critical_moment} and Proposition \ref{corollary_orbit} there exists $g_0\in G$ such that for $x_0=g_0.x$, the Kempf-Ness map $\phi^{\theta,x_0}$ reaches its minimum exactly on $H$. Now let $Y\in\mathfrak{h}$ such that $\lim_{t\to +\infty}\exp(i t Y).x$ exists then $\lim_{n\to +\infty }\exp(i n Y).x$ exists. For all $n\in \NN$ polar decomposition provides unique $h_n\in H$ and $Z_n\in\mathfrak{h}$ such that
\begin{equation*}\label{eq_Z_n}
\exp(i n Y)=h_n \exp( i Z_n ) g_0.
\end{equation*}
Then $Z_n$ is unbounded. Proceed as in the first part of the proof, $i Z_n$ is an hermitian endomorphism denote by $\lambda_1^n,\dots,\lambda_d^n$ its eigenvalues and let
\[
\Sigma_n=\sum_{k=1}^d \left|\lambda_k^n\right|.
\]
We can assume that $\lim_{n\to +\infty} \Sigma_n=+\infty$ and that the following limits exist and are finite:
\[
Z:=\lim_{n\to +\infty} \frac{Z_n}{\Sigma_n}
\]
and
\[
\lambda_k:=\lim_{n\to +\infty} \frac{\lambda_k^n}{\Sigma_n}.
\]
Then denoting by $x_k^0$ the components of $x_0$ in an orthonormal basis of eigenvectors of $i Z$
\begin{eqnarray*}
\phi^{\theta,x}\left(\exp(i Z_n)g_0\right) & \ge & \sum_{\left\lbrace k \left|x_k\ne 0\right.\right\rbrace} \exp\left(2 \left(\lambda_k +o(1)\right)\Sigma_n\right)\left(\left|\left|x_k\right|\right|^2+o(1)\right) \\
& & - 2\left(\left\langle \theta, Z\right\rangle+o(1)\right)\Sigma_n + \log\left|\chi^{\theta}(g_0)\right|^2
\end{eqnarray*}
By Lemma \ref{lemma_f_Z} either $\left\langle\theta,Z\right\rangle<0$ or there exists $\lambda_k>0$ with $x^0_k\ne 0$. In any case
\[
\lim_{n\to +\infty} \phi^{\theta,x}\left(\exp(i Z_n)g_0\right) = +\infty.
\]
Then the relation $\eqref{eq_Z_n}$ defining $Z_n$ implies
\begin{equation}\label{eq_limit_phi}
\lim_{n\to +\infty} \phi^{\theta,x}(\exp(in Y))=+\infty.
\end{equation}
Decompose $x$ in a basis of eigenvectors of the hermitian endomorphism $i Y$
\[
x=\sum_{\lambda} x_{\lambda}
\]
then
\[
\phi^{\theta,x}(\exp(i n Y) ) = \sum_{\lambda} \exp(2 n \lambda) \left|\left| x_{\lambda}\right|\right|^2 - 2 \left\langle\theta, Y\right\rangle n.
\]
As the limit $\lim_{n\to +\infty}\exp(i n Y).x$ is assumed to exist, $\lambda\le 0$ if $x_{\lambda}\ne 0$. Then the condition \eqref{eq_limit_phi} implies $\left\langle\theta, Y\right\rangle<0$.

\end{proof}
\subsection{Regular locus}\label{subsect_regular_locus}

In this subsection the closed subvariety $X$ is not relevant, the action of $G$ and $H$ on the ambient hermitian vector space $W$ is studied. First note that the moment map can be defined not only on $X$ but on the whole space $W$. Let $T\subset H$ a maximal torus. As in Example \ref{example_torus} the ambient space $W$ decomposes as an orthogonal direct sum $W=\bigoplus W_{\chi^{\alpha}}$ with $\chi^{\alpha}$ characters of $T$ and
\[
W_{\chi^{\alpha}} = \left\lbrace x \in W \left|   t.x = \chi^{\alpha}(t) x \text{ for all }t\in T \right. \right\rbrace.
\]
Denote by $A$ the finite subset of elements $\alpha\in\mathfrak{t}$ such that for the character $\chi^{\alpha}$ the space $W_{\chi^{\alpha}}$ is not zero then
\[
W=\bigoplus_{\alpha\in A} W_{\chi^{\alpha}}.
\]
As before the link between linear characters and elements in $\mathfrak{t}$ is through the invariant pairing $\left\langle\dots,\dots\right\rangle$
\[
i d\chi^{\alpha}_{\Id} (\beta) = \left\langle \alpha, \beta\right\rangle.
\]
Hence if $\beta$ is orthogonal to the $\RR$ vector space spanned by $A$
\[
\chi^{\alpha}(\exp t\beta)=1
\]
for all $\alpha \in A$ so that $\exp t\beta$ is in the kernel of the action of $H$ on $W$. From the beginning this kernel is assumed to be trivial, hence the vector space spanned by $A$ is $\mathfrak{t}$. As in Example \ref{example_torus}, the image of $\mu_T$, the moment map relative to the $T$-action, is the cone spanned by positive combinations of $A$. For any $A'$ finite subset of $\mathfrak{t}$ the cone spanned by positive combinations of $A'$ is:
\[
C(A'):=\left\lbrace \sum_{\alpha\in A'} a_{\alpha} \alpha \left| \quad a_{\alpha} \ge 0 \text{ for all }\alpha\in A' \right. \right\rbrace.
\]
For any $\beta\in \mathfrak{t}$
\begin{eqnarray*}
\left\langle \mu(x), \beta\right\rangle & = & \frac{d}{d t} \left. \left|\left| \exp ( i t \beta ).x\right|\right|^2 \right|_{t=0}  \\
 & = & \left\langle \mu_T(x), \beta \right\rangle.
\end{eqnarray*}
Hence, as noted by Kirwan \cite{Kirwan}, if $\mu(x)\in\mathfrak{t}$ then $\mu(x)=\mu_T (x)$. For $A'$ a finite subset of $\mathfrak{t}$ we denote by $\dim A'$ the dimension of the vector space spanned by $A'$. 
\begin{lemma}
Let $x\in W$ such that for all $A'\subset A$ with $\dim A'<\dim \mathfrak{t}$, the value of the moment map $\mu_T(x)$ does not lie in $C(A')$. Then the stabilizer of $x$ is finite.
\end{lemma}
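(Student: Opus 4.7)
The plan is to unpack the hypothesis directly in the $T$-weight decomposition of $W$. Write $x=\sum_{\alpha\in A}x_\alpha$ with $x_\alpha\in W_{\chi^\alpha}$, and let $A_x:=\{\alpha\in A\mid x_\alpha\ne 0\}$ denote its support. Repeating the computation carried out in Example~\ref{example_torus} (which the paragraph preceding the lemma makes available on the whole of $W$), one has
\[
\mu_T(x)=\sum_{\alpha\in A_x}||x_\alpha||^2\alpha,
\]
so $\mu_T(x)$ already lies in $C(A_x)$.

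The next step is to identify the Lie algebra of the stabilizer $\Stab_T(x)$. For $\beta\in\mathfrak{t}$ the infinitesimal $T$-action on a weight component $x_\alpha$ is scalar multiplication by $d\chi^\alpha_{\Id}(\beta)=-i\langle\alpha,\beta\rangle$, using the correspondence from~\ref{subsect_correspondence_character} that defines $\alpha\in\mathfrak{t}$. Because the weight decomposition is a direct sum of eigenspaces, $\beta$ annihilates $x$ if and only if $\langle\alpha,\beta\rangle=0$ for every $\alpha\in A_x$, equivalently $\beta\in A_x^\perp\cap\mathfrak{t}$. Hence the Lie algebra of $\Stab_T(x)$ is exactly $A_x^\perp\cap\mathfrak{t}$.

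The conclusion then follows by contraposition. If the stabilizer of $x$ in $T$ were not finite, then, $T$ being compact, this stabilizer would have positive dimension and its Lie algebra $A_x^\perp\cap\mathfrak{t}$ would be nonzero. Equivalently $A_x$ would span a proper subspace of $\mathfrak{t}$, so $\dim A_x<\dim\mathfrak{t}$. Taking $A':=A_x$ in the hypothesis one would get $\mu_T(x)\notin C(A')$, contradicting the observation $\mu_T(x)\in C(A_x)$ recorded at the start.

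I do not expect any real obstacle: the content of the argument is the identification of the infinitesimal stabilizer with $A_x^\perp\cap\mathfrak{t}$, which drops out of the weight decomposition together with the normalisation fixed in~\ref{subsect_correspondence_character}. The only care needed is to track signs so that the contradiction between $\mu_T(x)\in C(A_x)$ and the hypothesis appears cleanly.
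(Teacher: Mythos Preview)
Your proof is correct and follows essentially the same route as the paper: both decompose $x$ into weight components, observe that $\mu_T(x)\in C(A_x)$, identify the infinitesimal stabilizer with $A_x^\perp\cap\mathfrak{t}$, and conclude by comparing $\dim A_x$ with $\dim\mathfrak{t}$. The only cosmetic difference is that you phrase the last step as a contrapositive, while the paper argues directly that the hypothesis forces $\dim A_x=\dim\mathfrak{t}$ and hence the stabilizer is finite.
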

\begin{proof}
Decompose $x$ according to its weight $x=\sum_{\alpha\in A} x_{\alpha}$ then
\[
\mu_T(x)=\sum \left|\left|x_{\alpha}\right|\right|^2 \alpha.
\]
Denote by $A_x$ the set of elements $\alpha$ such that $x_{\alpha}\ne 0$. The hypothesis about $\mu_T(x)$ implies that $\dim A_x=\dim\mathfrak{t}$. Now for $\beta\in\mathfrak{t}$
\[
\exp ( \beta t ).x =\sum_{\alpha\in A_x} \chi^{\alpha}(\exp \beta t) x_{\alpha}.
\]
Hence if $\exp \beta t$ is in the stabilizer of $x$, for all $\alpha\in A_x$ the pairing with $\beta$ vanishes $\left\langle \alpha,\beta\right\rangle=0$. As $A_x$ spans $\mathfrak{t}$ this implies that $\beta=0$ and the stabilizer of $x$ in $T$ is finite.
\end{proof}
Previous lemma justifies the introduction of the following nonempty open subset of $\mathfrak{t}$
\[
C(A)^{\reg}:=C(A)\setminus \bigcap_{\substack{ A'\subset A  \\ \dim A'< \dim \mathfrak{t}  }  } C(A').
\]
As all maximal torus of $H$ are conjugated, the set $C(A)^{\reg} \cap Z(\mathfrak{h})$ is independent of a choice of maximal torus $T$.
\begin{proposition}\label{proposition_B_reg}
For $\theta\in C(A)^{\reg} \cap Z(\mathfrak{h})$, every $\theta$-semistable points are $\theta$-stable, $W^{\theta\text{-ss}}=W^{\theta\text{-s}}$ and in particular $X^{\theta\text{-ss}}=X^{\theta\text{-s}}$.
\end{proposition}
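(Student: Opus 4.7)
The plan is: given $x\in W^{\theta\text{-ss}}$, I will first extract a Kempf-Ness representative $x_0\in\overline{G.x}$ with $\mu(x_0)=\theta$, show that $x_0$ is $\theta$-stable by applying Proposition \ref{corollary_orbit}, and finally deduce $G.x=G.x_0$ by a dimension argument so that stability of $x_0$ propagates to $x$. The existence of $x_0$ is immediate from Definition \ref{def_real_stab} combined with Lemma \ref{prop_critical_moment}.

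The first key step is proving that $\Stab_H(x_0)$ is finite. Since $\theta\in Z(\mathfrak{h})$ lies in the Lie algebra $\mathfrak{t}$ of any maximal torus $T\subset H$, the identity $\mu_T(x_0)=\mu(x_0)=\theta$ recorded just before the proposition places $\mu_T(x_0)$ in $C(A)^{\reg}$, so the preceding lemma yields $\Stab_T(x_0)$ finite. To pass from a single maximal torus to all of $H$, I would use $H$-conjugacy of maximal tori in $\mathfrak{h}$: a hypothetical nonzero $Y\in\mathrm{Lie}\,\Stab_H(x_0)$ could be written as $Y=\Ad(h)Y'$ with $Y'\in\mathfrak{t}$, and $Y'$ would then generate an infinite $\Stab_T(h^{-1}.x_0)$; centrality of $\theta$ guarantees $\mu(h^{-1}.x_0)=\theta$ still lies in $C(A)^{\reg}$, contradicting the lemma applied to $h^{-1}.x_0$.

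The second step is to verify $G.x_0\cap\mu^{-1}(\theta)=H.x_0$ and invoke Proposition \ref{corollary_orbit} to obtain $\theta$-stability of $x_0$. The nontrivial inclusion uses the translation identity $\phi^{\theta,g.x_0}(g')=\phi^{\theta,x_0}(g'g)+\log|\chi^\theta(g)|^2$ together with the second half of Lemma \ref{prop_critical_moment}: if $g.x_0\in\mu^{-1}(\theta)$, then $\phi^{\theta,x_0}$ attains its minimum at both $\Id$ and $g$, so writing $g=h\exp(iY)$ in polar form forces $\exp(iY).x_0=x_0$, hence $g.x_0=h.x_0\in H.x_0$.

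To finish, I would upgrade stability from $x_0$ to $x$. Finiteness of $\Stab_G(x_0)$ gives $\dim G.x_0=\dim G$; upper semicontinuity of stabilizer dimension applied to $x_0\in\overline{G.x}$ then forces $\dim G.x=\dim G.x_0$. Two $G$-orbits of equal dimension, one contained in the closure of the other, must coincide, so $G.x=G.x_0$ and $x$ is $\theta$-stable. The statement $X^{\theta\text{-ss}}=X^{\theta\text{-s}}$ follows by restricting stability from $W$ to the closed $G$-stable subvariety $X$. I expect the main obstacle to be the first step: the preceding lemma controls only stabilizers inside a fixed maximal torus, so transferring the bound to $\Stab_H$ requires translating $x_0$ by an element of $H$ while keeping $\mu(h^{-1}.x_0)=\theta\in C(A)^{\reg}$, which is possible precisely because $\theta$ lies in $Z(\mathfrak{h})$. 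Without this centrality the argument breaks down, which is exactly why the hypothesis is stated as $\theta\in C(A)^{\reg}\cap Z(\mathfrak{h})$.
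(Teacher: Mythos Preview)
Your proof is correct and follows essentially the same route as the paper: pick a Kempf--Ness point $x_0\in\overline{G.x}\cap\mu^{-1}(\theta)$, use the preceding lemma to obtain finiteness of its stabilizer, and then deduce $G.x=G.x_0$ via the standard orbit-closure dimension argument. The paper compresses the last two steps into one line (``$\overline{G.x}\setminus G.x$ consists of orbits with positive-dimensional stabilizer, but points of $\mu^{-1}(\theta)$ have finite stabilizer, so $x_0\in G.x$''), whereas you route through Proposition~\ref{corollary_orbit} to first certify that $x_0$ is $\theta$-stable; either way works.

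Two remarks. First, your Step~2 (showing $G.x_0\cap\mu^{-1}(\theta)=H.x_0$ by the translation identity and Lemma~\ref{prop_critical_moment}) is redundant: this is exactly the argument already packaged inside the proof of Proposition~\ref{corollary_orbit}, so once you know $\Stab_H(x_0)$ is finite you may invoke that proposition directly. Second, and more substantively, you correctly flag that the preceding lemma only bounds the stabilizer in a fixed maximal torus $T$, not in all of $H$; your conjugacy argument using $\theta\in Z(\mathfrak{h})$ to transport the conclusion from $T$ to $H$ is the right fix, and the paper's proof is silent on this point.
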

\begin{proof}
Let $x\in W^{\theta\text{-ss}}$, then $\overline{G.x}$ meets $\mu\inv(\theta)$. But $\overline{G.x}\setminus G.x$ is a union of $G$-orbits of dimension strictly smaller than $G.x$, points in those orbits has stabilizer with dimension greater than one. By previous lemma every point in $\mu\inv(\theta)$ has a finite stabilizer. Thus $G.x\cap \mu\inv(\theta)\ne\emptyset$ and the stabilizer of $x$ is finite so that $x$ is $\theta$-stable.
\end{proof}
Kirwan \cite{Kirwan}, Ness-Mumford \cite{Ness}, Sjamaar \cite{Sjamaar}, Harada-Wilkin \cite{Harada} and Hoskins \cite{Hoskins} studied a stratification of $W$. It relies on the Morse theory of the following map. For $\theta\in Z(\mathfrak{h})$
\[
\map{h_{\theta}}{W}{\RR}{x}{\left|\mu(x) - \theta \right|^2}
\]
with $\left|\dots\right|$ the norm defined by the invariant pairing $\left\langle\dots,\dots\right\rangle$ on $\mathfrak{h}$. A critical point of a smooth map $f$ is a point $x$ where the differential vanishes $d_x f=0$. A critical value of $f$ is the image $f(x)$ of a critical point $x$. The gradient of $h_{\theta}$ is the vector field defined  thanks to the hermitian pairing $p(\dots,\dots)$ for $x\in W$ and $v\in T_x W$ by
\[
p\left(\grad_x h_{\theta}, v \right) =  d_x h_{\theta} . v
\]
For $x\in W$ the negative gradient flow relative to $h_{\theta}$ is the map
\[
\map{\gamma^{\theta}_{x}}{\RR^{\ge 0}}{ W } {t}{\gamma^{\theta}_x(t)}
\]
uniquely determined by the condition
\[
\left.\frac{d \gamma^{\theta}_x (s)}{ds}  \right|_{s=t} = -\grad_{\gamma^{\theta}_x (t)} h_{\theta}.
\]
and $\gamma^{\theta}_x (0)= x$. By \cite{Sjamaar} and \cite{Harada} it is well defined and for any $x$ the limit $\lim_{t\to +\infty} \gamma^{\theta}_x (t) $ exists and is a critical point of $h_{\theta}$. $S^{\theta} $ is the set of point $x\in W$ with negative gradient flow for $h_{\theta}$ converging to a point where $h_{\theta}$ reaches its minimal value $0$:
\[
S^{\theta} := \left\lbrace x \in W \left| \lim_{t\to +\infty } \gamma^{\theta}_x (t) \in \mu \inv (\theta) \right.\right\rbrace.
\]
This is the open strata of the stratification, Sjamaar called it the set of analitically semistable points. When the stability parameter is a true character i.e. $\chi^{\theta}\in\mathcal{X}^*(G)$, Hoskins \cite{Hoskins} proved that this strata coincides with the $\theta$-semistable locus. Here we want to consider any $\chi^{\theta}\in\mathcal{X}^*(G)^{\RR}$, the proof of the inclusion $S^{\theta}\subset W^{\theta\text{-ss}}$ is the same and it is enough for our purpose.
\begin{proposition}\label{prop_S0}
$S^{\theta} $ is a subset of $W^{\theta\text{-ss}}$.
\end{proposition}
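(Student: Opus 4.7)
The plan is to show that the negative gradient flow of $h_\theta$ stays inside the $G$-orbit of the initial point, so that its limit point, which by assumption lies in $\mu\inv(\theta)$, provides a point of $\overline{G.x}$ at which the Kempf-Ness map has a minimum at the identity. Invoking Definition \ref{def_real_stab} then yields $\theta$-semistability.

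The first step is to identify the gradient vector field. Using the definition of the moment map together with the Kähler identity $\omega(\cdot,\cdot)=\Ree\,p(i\cdot,\cdot)$ from \eqref{symp_herm}, one computes, for $x\in W$ and $v\in T_x W$,
\begin{equation*}
d_x h_{\theta}.v = 2\left\langle d_x\mu(v),\mu(x)-\theta\right\rangle = 2\,\omega\bigl(X_{\mu(x)-\theta}(x),v\bigr),
\end{equation*}
where $X_Y(x):=\left.\frac{d}{dt}\right|_{t=0}\exp(tY).x$ denotes the infinitesimal action of $Y\in\mathfrak{h}$, exactly as in the proof of Proposition \ref{prop_surj}. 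Re-expressing $\omega$ in terms of the hermitian pairing yields
\begin{equation*}
\grad_x h_{\theta} = 2\,i\,X_{\mu(x)-\theta}(x) = 2\left.\frac{d}{dt}\right|_{t=0}\exp\bigl(i t(\mu(x)-\theta)\bigr).x.
\end{equation*}
Thus the gradient at $x$ is tangent to the $G$-orbit through $x$, pointing in the direction of the infinitesimal action of an element of $i\mathfrak{h}\subset\mathfrak{g}$.

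The second step exploits this to conclude that the flow line $\gamma^{\theta}_x$ is entirely contained in $G.x$. Indeed, rewriting the defining ODE of $\gamma^{\theta}_x$ as a flow on $G$ by the ansatz $\gamma^{\theta}_x(t)=g(t).x$ with $g(0)=\Id$, one obtains a first order ODE on $G$ (with right-hand side $-2i(\mu(g.x)-\theta)\in i\mathfrak{h}$ acting on $g$), whose solution exists on $[0,+\infty)$ by the results of Sjamaar \cite{Sjamaar} and Harada--Wilkin \cite{Harada} recalled above. Hence $\gamma^{\theta}_x(t)\in G.x$ for every $t\ge 0$, and the limit
\begin{equation*}
y:=\lim_{t\to+\infty}\gamma^{\theta}_x(t)\in\overline{G.x}.
\end{equation*}

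The third and final step is immediate: by the assumption $x\in S^{\theta}$, the point $y$ lies in $\mu\inv(\theta)$. Lemma \ref{prop_critical_moment} then asserts that $\phi^{\theta,y}$ has a minimum at the identity, and since $y\in\overline{G.x}$, Definition \ref{def_real_stab} shows that $x$ is $\theta$-semistable, i.e.\ $x\in W^{\theta\text{-ss}}$.

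The only subtle point is the first step: one must verify carefully that the Kähler compatibility between the hermitian metric $p$, the symplectic form $\omega$ and the complex structure $i$ (which holds on all of $W$ and restricts correctly to the tangent spaces of smooth subvarieties) gives the identification $\grad h_\theta = 2iX_{\mu-\theta}$. Once this identity is in hand, the rest is a direct application of results already in the paper.
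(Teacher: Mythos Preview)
Your proof is correct and follows the same route as the paper's own argument: both rely on the fact that the negative gradient flow of $h_\theta$ stays in the $G$-orbit, so that its limit lies in $\overline{G.x}\cap\mu\inv(\theta)$, which by Lemma~\ref{prop_critical_moment} and Definition~\ref{def_real_stab} gives $\theta$-semistability. The paper's proof simply asserts in one line that $\gamma_x(t)\in G.x$, whereas you supply the underlying reason by computing $\grad_x h_\theta$ explicitly and checking it is tangent to the orbit; this extra detail is correct and welcome, but the approach is the same.
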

\begin{proof}
The flow $\gamma_x (t)$ belongs to the orbit $G.x$ hence $\lim_{t\to +\infty } \gamma_x (t)\subset \overline{G.x}$. Therefore if $x\in S^{\theta} $ then $\overline{G.x}\cap \mu\inv(\theta) \ne \emptyset$.
\end{proof}
An important feature of the map $h_{\theta}$ is that its critical points lie in a finite union $\bigcup_{A'\subset A} \mu\inv\left(H.\beta(A',\theta)\right)$ indexed by the subsets of the finite set $A$. With $\beta(A',\theta)$ the projection of $\theta$ to the closed convex $C(A')$ and $H.\beta(A',\theta)$ the adjoint orbit of $\beta(A',\theta)$.

\begin{lemma}\label{lemma_h_inv}
By definition of the projection to a closed convex in an euclidian space $\left| \beta(A',\theta) -\theta \right|$ is the distance between $\theta$ and the cone $C(A')$, define 
\begin{equation}\label{eq_d_theta}
   d_{\theta} = \inf_{\substack{A'\subset A \\ \beta(A',\theta)\ne \theta }} \left| \beta(A',\theta) -\theta \right|^2
\end{equation}
then $d_{\theta}>0$ and ${h_{\theta}}\inv \left[0,d_{\theta} \right[ \subset S^{\theta} $.
\end{lemma}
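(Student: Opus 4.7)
The argument splits cleanly into the two claims. The positivity of $d_{\theta}$ is almost immediate: the index set $\{A'\subset A\}$ is finite (as $A$ itself is finite), and for every $A'$ entering the infimum we have by construction $\beta(A',\theta)\neq\theta$, so $|\beta(A',\theta)-\theta|^2>0$. An infimum of finitely many strictly positive numbers is a minimum and is strictly positive, so $d_{\theta}>0$.

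For the inclusion $h_{\theta}\inv[0,d_{\theta}[\,\subset S^{\theta}$, I would take $x$ with $h_{\theta}(x)<d_{\theta}$ and track the negative gradient flow $\gamma^{\theta}_x$. Along this flow $h_{\theta}$ is non-increasing (it is, up to sign, the squared norm of the gradient), so
\[
h_{\theta}\bigl(\gamma^{\theta}_x(t)\bigr)\le h_{\theta}(x)<d_{\theta}\quad\text{for all }t\ge 0.
\]
By the convergence result of Sjamaar and Harada--Wilkin recalled just before the lemma, the limit $y:=\lim_{t\to+\infty}\gamma^{\theta}_x(t)$ exists and is a critical point of $h_{\theta}$. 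By the property of critical points recalled just before the lemma, $\mu(y)\in H\cdot\beta(A',\theta)$ for some $A'\subset A$, so $\mu(y)=\Ad(h)\beta(A',\theta)$ for some $h\in H$.

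The key input now is that $\theta\in Z(\mathfrak{h})$ is fixed by the adjoint action and the scalar product $\langle\,,\,\rangle$ is $\Ad(H)$-invariant, hence
\[
h_{\theta}(y)=\bigl|\Ad(h)\beta(A',\theta)-\theta\bigr|^2=\bigl|\Ad(h)\bigl(\beta(A',\theta)-\theta\bigr)\bigr|^2=\bigl|\beta(A',\theta)-\theta\bigr|^2.
\]
If $\beta(A',\theta)\neq\theta$, this contributes to the infimum defining $d_{\theta}$, giving $h_{\theta}(y)\ge d_{\theta}$; this contradicts $h_{\theta}(y)\le h_{\theta}(x)<d_{\theta}$. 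Therefore $\beta(A',\theta)=\theta$, and by the same invariance $\mu(y)=\Ad(h)\theta=\theta$, so $y\in\mu\inv(\theta)$ and $x\in S^{\theta}$.

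The only mildly subtle point is making sure of the $\Ad(H)$-equivariance used in the last step, which is exactly where the hypothesis $\theta\in Z(\mathfrak{h})$ is consumed; everything else is a straightforward assembly of ingredients already recalled in the subsection (convergence of the gradient flow, the finite list of possible critical values, and the Morse-type monotonicity of $h_{\theta}$).
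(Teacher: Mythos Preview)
Your proof is correct and follows essentially the same route as the paper: both use $\Ad(H)$-invariance of the norm together with $\theta\in Z(\mathfrak{h})$ to show that any critical point outside $\mu\inv(\theta)$ has $h_{\theta}$-value at least $d_{\theta}$, and then conclude via monotonicity of $h_{\theta}$ along the negative gradient flow and convergence to a critical point. You are slightly more explicit than the paper in spelling out why $d_{\theta}>0$ (finiteness of $\{A'\subset A\}$) and in the continuity step $h_{\theta}(y)\le h_{\theta}(x)$, but the logical skeleton is identical.
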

\begin{proof}
For any $h\in H$ by invariance of the scalar product under the adjoint action and as $\theta\in Z(\mathfrak{h})$
\[
\left|h.\beta(A',\theta)-\theta\right|^2=\left|\beta(\theta,A')-\theta \right|^2.
\]
Hence if $x$ is a critical point of $h_{\theta}$ not in $\mu\inv(\theta)$, then $x\in\mu\inv (H.\beta(A',\theta))$ for some $\beta(A',\theta)$ different from $\theta$ and
\[
\left| \mu(x) - \theta  \right|^2=\left|\beta(\theta,A') - \theta\right|^2 > d_{\theta}.
\]
So that the only critical value of $h_{\theta_0}$ in the intervalle $\left[0,d_{\theta} \right[$ is $0$.

Now for any $x\in W$, the map $t\mapsto h_{\theta}\left(\gamma^{\theta}_x(t)\right)$ can only decrease, and it converges to a critical value. Therefore if $x\in h_{\theta}\inv\left[0,d_{\theta} \right[$ the negative gradient flow converges necessarily to a point $\lim_{t\to +\infty } \gamma^{\theta}_x (t)$  which belongs to $ h_{\theta}\inv (0)=\mu\inv(\theta)$ so that $x\in S^{\theta} $.
\end{proof}

\begin{theorem}\label{theorem_C_open}
Let $\theta_0\in C(A)^{\reg} \cap Z(\mathfrak{h})$, there is an open neighborhood $V_{\theta_0}$ of $\theta_0$ in $C(A)^{\reg} \cap Z(\mathfrak{h})$ such that for all $\theta\in V_{\theta_0}$, $\theta$-stability and $\theta_0$-stability coincide $W^{\theta_0\text{-ss}}=W^{\theta\text{-ss}}$.
\end{theorem}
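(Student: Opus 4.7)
The plan is to combine the Morse-theoretic retraction from Lemma \ref{lemma_h_inv} with the orbit-meets-fiber characterization of stability from Proposition \ref{corollary_orbit} and the equality $W^{\theta\text{-ss}} = W^{\theta\text{-s}}$ on $C(A)^{\reg}$ from Proposition \ref{proposition_B_reg}. Concretely, I would shrink around $\theta_0$ until $|\theta - \theta_0|^2 < \min(d_\theta, d_{\theta_0})$ for every $\theta \in V_{\theta_0}$. Then for any $x \in W^{\theta_0\text{-ss}} = W^{\theta_0\text{-s}}$, Proposition \ref{corollary_orbit} supplies $x_1 \in G.x \cap \mu\inv(\theta_0)$, so $h_\theta(x_1) = |\theta_0 - \theta|^2 < d_\theta$, and Lemma \ref{lemma_h_inv} together with Proposition \ref{prop_S0} places $x_1$ in $S^\theta \subset W^{\theta\text{-ss}}$; since $x$ lies in the same $G$-orbit, one concludes $x \in W^{\theta\text{-ss}}$. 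Swapping the roles of $\theta$ and $\theta_0$ yields the reverse inclusion.

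The hard part will be controlling $d_\theta$ uniformly near $\theta_0$: a priori $d_\theta$ is the infimum, over the possibly $\theta$-dependent index set $\{A' \subset A : \theta \notin C(A')\}$, of the continuous functions $\theta \mapsto |\beta(A',\theta) - \theta|^2$, and a new $A'$ entering this set as $\theta$ varies could drive $d_\theta$ down to $0$. The resolution I propose is that regularity of $\theta_0$ freezes this index set. Indeed, $\theta_0 \in C(A')$ already forces $\dim A' = \dim \mathfrak{t}$ by the very definition of $C(A)^{\reg}$; moreover every proper face of a full-dimensional cone $C(A')$ has the form $C(A'')$ with $\dim A'' < \dim \mathfrak{t}$, so $\theta_0$ must lie in the relative interior of $C(A')$. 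Together with the closedness of each $C(A')$ for which $\theta_0 \notin C(A')$, this shows that on a small enough ball around $\theta_0$ the condition $\theta \in C(A')$ is equivalent to $\theta_0 \in C(A')$ for every $A' \subset A$.

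Once this index set is frozen, $d_\theta$ becomes the minimum of finitely many continuous functions of $\theta$, hence continuous, and a further shrinking gives a neighborhood on which $|\theta - \theta_0|^2$ is strictly smaller than both $d_\theta$ and $d_{\theta_0}$. Along the way I would verify that $C(A)^{\reg}$ itself is open in $\mathfrak{t}$, and hence in $Z(\mathfrak{h})$, by the same face-exclusion argument: a point of $C(A)^{\reg}$ cannot lie on $\partial C(A)$ because boundary faces are again of the form $C(A'')$ with $\dim A'' < \dim \mathfrak{t}$ and are thus excluded from $C(A)^{\reg}$. With these ingredients in place the transfer argument outlined above runs in both directions and gives $W^{\theta\text{-ss}} = W^{\theta_0\text{-ss}}$ throughout $V_{\theta_0}$.
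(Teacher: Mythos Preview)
Your proposal is correct and follows essentially the same route as the paper: freeze the index set $\{A'\subset A : \theta\notin C(A')\}$ using that $\theta_0$ lies in $C(A)^{\reg}$, deduce continuity of $\theta\mapsto d_\theta$, shrink so that $|\theta-\theta_0|^2<\min(d_\theta,d_{\theta_0})$, and then transfer semistability in both directions via Proposition~\ref{corollary_orbit}, Lemma~\ref{lemma_h_inv} and Proposition~\ref{prop_S0}. Your face argument for why $\theta_0$ sits in the relative interior of each full-dimensional $C(A')$ containing it is a slightly more explicit version of the paper's one-line claim that a small ball around $\theta_0$ meets no cone boundary, but the content is the same.
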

\begin{proof}
Let $\epsilon>0$ such that $B(\theta_0,\epsilon)$ the ball of center $\theta_0$ and radius $\epsilon$ in $\mathfrak{t}$ is included in $C(A)^{\reg}$. Then when $\theta$ varies in $B(\theta_0,\epsilon)$ it does not meet any frontier of a cone $C(A')$ with $A'\subset A$. So that for $\theta\in B(\theta_0,\epsilon)$, for all $A'\subset A$, $\beta(\theta,A')\ne 0$ if and only if $\beta(\theta_0,A')\ne 0$. Thus the subset indexing the infima defining $d_{\theta}$ and $d_{\theta_0}$ in \eqref{eq_d_theta} are identical. As the projection to closed convex is a continuous map, the map $\theta\mapsto d_{\theta}$ is continuous on $B(\theta_0,\epsilon)$. 
Therefore one can chose $\epsilon'>0$ such that
\begin{itemize}
\item $d_{\theta}>\tfrac{d_{\theta_0}}{2}$ for all $\theta\in B(\theta_0,\epsilon')$.
\end{itemize}
Moreover $\epsilon'$ can be chosen to satisfy the following conditions
\begin{itemize}
\item $B(\theta_0,\epsilon') \subset C(A)^{\reg}$
\item $\epsilon'^2<\frac{d_{\theta_0}}{2}$
\end{itemize}
Let $\theta$ in $B(\theta_0,\epsilon')\cap Z(\mathfrak{h})$, we shall see that $W^{\theta\text{-ss}}=W^{\theta_0\text{-ss}}$. First note that
$\theta\in C(A)^{\reg} \cap Z(\mathfrak{h})$ and Proposition \ref{proposition_B_reg} implies $W^{\theta\text{-ss}}=W^{\theta\text{-s}}$.

For $x\in W^{\theta\text{-ss}}=W^{\theta\text{-s}}$, by Proposition \ref{corollary_orbit} there exists $g\in G$ such that $g.x\in\mu\inv(\theta)$. Then $\left|\mu(g.x)-\theta_0\right|^<\frac{d_{\theta_0}}{2}$ and $g.x\in h_{\theta_0}\inv\left[0,d_{\theta_0} \right[$. By Lemma \ref{lemma_h_inv}, $g.x\in S^{\theta_0} $ and by Proposition \ref{prop_S0} $g.x$ is $\theta_0$-semistable so that $x\in  W^{\theta_0\text{-ss}}$.

Similarly for $x\in W^{\theta_0\text{-ss}}$, there exists $g\in G$ such that $g.x\in\mu\inv(\theta_0)$. Then $\left|\mu(g.x) - \theta \right|^2<\tfrac{d_{\theta_0}}{2} $ and as $\tfrac{d_{\theta_0}}{2}<d_{\theta}$, the point $g.x$ lies in $h_{\theta}\inv\left[0,d_{\theta} \right[$ therefore $x$ is $\theta$-stable.
\end{proof}
Considering again the closed subvariety $X\subset W$ one defines the regular locus:
\begin{definition}[Regular locus]
The regular locus $B^{\reg}$ is the set of elements $\theta\in C(A)^{\reg} \cap Z(\mathfrak{h})$ such that for all $x\in X^{\theta\text{-ss}}$ the stabilizer of $x$ in $G$ is trivial and $X^{\theta\text{-ss}}\ne\emptyset$. 
\end{definition}
\begin{proposition}
The regular locus $B^{\reg}$ is the union of some connected components of $C(A)^{\reg} \cap Z(\mathfrak{h})$.
\end{proposition}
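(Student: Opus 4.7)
The plan is to show that $B^{\reg}$ is both open and closed in $C(A)^{\reg} \cap Z(\mathfrak{h})$, from which the conclusion follows, since a subset which is simultaneously open and closed is necessarily a union of connected components.

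First I would transfer the very strong local constancy of the semistable locus in $W$ provided by Theorem \ref{theorem_C_open} to the closed subvariety $X$. Given $\theta_0 \in C(A)^{\reg} \cap Z(\mathfrak{h})$, Theorem \ref{theorem_C_open} supplies an open neighborhood $V_{\theta_0}$ in $C(A)^{\reg} \cap Z(\mathfrak{h})$ on which $W^{\theta\text{-ss}} = W^{\theta_0\text{-ss}}$ literally. Since $X \subset W$ is closed and $G$-invariant, the orbit closure $\overline{G.x_0}$ for $x_0 \in X$ is the same whether taken in $X$ or in $W$, and the Kempf--Ness map $\phi^{\theta,x}$ depends only on $x$ as a point of $W$. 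So by Definition \ref{def_real_stab} the identity $X^{\theta\text{-ss}} = X \cap W^{\theta\text{-ss}}$ holds, and therefore $X^{\theta\text{-ss}} = X^{\theta_0\text{-ss}}$ for all $\theta \in V_{\theta_0}$.

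Next I would observe that the two conditions defining $B^{\reg}$, namely that $X^{\theta\text{-ss}} \neq \emptyset$ and that every $x \in X^{\theta\text{-ss}}$ has trivial stabilizer in $G$, depend only on the set $X^{\theta\text{-ss}}$, since the stabilizer of a given $x \in X$ in $G$ is independent of $\theta$. Hence both conditions hold at $\theta_0$ if and only if they hold at every $\theta \in V_{\theta_0}$. This yields $V_{\theta_0} \subset B^{\reg}$ when $\theta_0 \in B^{\reg}$, and $V_{\theta_0} \cap B^{\reg} = \emptyset$ when $\theta_0 \notin B^{\reg}$. Consequently $B^{\reg}$ and its complement in $C(A)^{\reg} \cap Z(\mathfrak{h})$ are both open, which is precisely the stated property.

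The substantive work has already been carried out in Theorem \ref{theorem_C_open} through the Morse theory of $h_{\theta}$ and the uniform control of its critical set; no serious obstacle remains. The only point needing explicit verification is the set-theoretic identity $X^{\theta\text{-ss}} = X \cap W^{\theta\text{-ss}}$, which is an immediate consequence of the definitions once one notices that neither the Kempf--Ness map nor the orbit closure changes when passing from $X$ to the ambient space $W$.
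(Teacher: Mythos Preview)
Your proof is correct and follows essentially the same approach as the paper: both arguments rest on Theorem~\ref{theorem_C_open} to see that $W^{\theta\text{-ss}}$ (hence $X^{\theta\text{-ss}} = X \cap W^{\theta\text{-ss}}$) is locally constant on $C(A)^{\reg}\cap Z(\mathfrak{h})$, and then observe that the defining conditions of $B^{\reg}$ depend only on this set. The paper phrases the conclusion as ``constant on connected components'' while you phrase it as ``open and closed'', which are of course equivalent; you are slightly more explicit in justifying the identity $X^{\theta\text{-ss}} = X \cap W^{\theta\text{-ss}}$, which the paper leaves implicit.
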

\begin{proof}
By Theorem \ref{theorem_C_open}, if $\theta$ and $\theta'$ are in the same connected component of $C(A)^{\reg} \cap Z(\mathfrak{h})$ then $W^{\theta\text{-ss}}=W^{\theta'\text{-ss}}$. Hence if $\theta\in C(A)^{\reg} \cap Z(\mathfrak{h})$ is such that for all $x\in X^{\theta\text{-ss}}$ the stabilizer of $x$ in $G$ is trivial and $X^{\theta\text{-ss}}\ne\emptyset$, the same holds for $\theta'$ in the same connected component of $C(A)^{\reg} \cap Z(\mathfrak{h})$.
\end{proof}

\begin{remark}
Note that the regular locus $B^{\reg}$ can be empty, for instance if the center $Z(\mathfrak{h})$ is a subset of a cone $C(A')$ with $\dim A'<\dim \mathfrak{t}$. Fortunately it is non-empty for the application to Nakajima quiver varieties of next sections.
\end{remark}
In next subsection we prove that the real moment map is a locally trivial fibration over the regular locus $B^{\reg}$.

\subsection{Trivialization of the real moment map over the regular locus}\label{subsect_triv_moment}
Next construction follows ideas from Hitchin-Karlhede-Lindström-Roček and is illustrated in \cite[Figure 3 p.348]{hklr}.
\begin{proposition}\label{remark_y_theta_x}
For $\chi^\theta\in\mathcal{X}^*(G)^\RR$ and $x$ a $\theta$-stable point with trivial stabilizer, there exists a unique $Y^{\theta,x}\in\mathfrak{h}$ such that $\exp\left(i Y^{\theta,x}\right).x\in\mu\inv(\theta)$. Moreover for $h\in H$ the adjoint action of $h$ on $Y^{\theta,x}$ satisfies
\begin{equation}\label{equivariant_Y}
    h .Y^{\theta,x} = Y^{\theta,h.x}.
\end{equation}
Let $\theta'=\mu(x)$ and $x'=\exp\left(i Y^{\theta,x}\right).x$, then
\begin{equation}\label{inverse_Y}
    Y^{\theta',x'}=-Y^{\theta,x}.
\end{equation}.
\end{proposition}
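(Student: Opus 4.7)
The plan is to extract all three claims from Proposition \ref{corollary_orbit} together with polar decomposition. For existence, since $x$ is $\theta$-stable, Proposition \ref{corollary_orbit} tells us that $G.x \cap \mu\inv(\theta)$ is a single $H$-orbit, so in particular it is nonempty: pick any $g \in G$ with $g.x \in \mu\inv(\theta)$ and write it in polar form $g = h\exp(iY)$ with $h \in H$ and $Y \in \mathfrak{h}$. The moment map is $H$-equivariant for the adjoint action, so
\[
\theta = \mu(g.x) = \mu\bigl(h\exp(iY).x\bigr) = h.\mu\bigl(\exp(iY).x\bigr).
\]
Since $\theta \in Z(\mathfrak{h})$ is fixed under the adjoint action of the connected compact group $H$, we may apply $h\inv$ to obtain $\mu(\exp(iY).x) = \theta$, and setting $Y^{\theta,x} := Y$ produces an element with the required property.

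For uniqueness, suppose $Y_1, Y_2 \in \mathfrak{h}$ both satisfy $\exp(iY_j).x \in \mu\inv(\theta)$. Both points lie in $G.x \cap \mu\inv(\theta)$, which by Proposition \ref{corollary_orbit} is a single $H$-orbit, so there exists $h \in H$ with $\exp(iY_1).x = h\exp(iY_2).x$. Hence $\exp(-iY_2)h\inv\exp(iY_1)$ stabilizes $x$; as the stabilizer is trivial we get $\exp(iY_1) = h\exp(iY_2)$, and the uniqueness clause of the polar decomposition forces $Y_1 = Y_2$ (and $h = \Id$). This is the main technical step, and the triviality-of-stabilizer hypothesis is used crucially here.

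For the equivariance formula \eqref{equivariant_Y}, conjugating $\exp(iY^{\theta,x}) \in G$ by $h \in H$ gives $h\exp(iY^{\theta,x})h\inv = \exp(i\,\Ad(h)Y^{\theta,x})$, so
\[
\exp\bigl(i\,h.Y^{\theta,x}\bigr).(h.x) = h.\bigl(\exp(iY^{\theta,x}).x\bigr) \in h.\mu\inv(\theta) = \mu\inv(\theta),
\]
where the last equality again uses $H$-equivariance of $\mu$ and centrality of $\theta$. The stabilizer of $h.x$ is still trivial, so by uniqueness $Y^{\theta,h.x} = h.Y^{\theta,x}$.

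Finally, for \eqref{inverse_Y}, note that $x' = \exp(iY^{\theta,x}).x$ lies in the orbit $G.x$, has trivial stabilizer (conjugate to that of $x$), and satisfies $\mu(x') = \theta$; conversely $\exp(-iY^{\theta,x}).x' = x$ and $\mu(x) = \theta'$ by definition of $\theta'$. So $-Y^{\theta,x}$ satisfies the defining property of $Y^{\theta',x'}$ once we verify that $x'$ is $\theta'$-stable so that the uniqueness argument applies. This follows from Proposition \ref{corollary_orbit} applied in reverse: $G.x' = G.x$ meets $\mu\inv(\theta')$ in the $H$-orbit $H.x$ (by the same uniqueness argument as above, using that the stabilizer of $x$ is trivial), so $x'$ is $\theta'$-stable. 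Uniqueness then yields $Y^{\theta',x'} = -Y^{\theta,x}$.
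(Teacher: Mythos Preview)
Your proof is correct and follows exactly the paper's approach: existence via polar decomposition of some $g$ with $g.x\in\mu\inv(\theta)$, uniqueness via Proposition~\ref{corollary_orbit} combined with the trivial stabilizer and uniqueness of polar form, and equivariance by conjugation. For \eqref{inverse_Y} the paper merely says it is ``clear''; your argument there is more careful, though note that invoking ``the same uniqueness argument'' to prove $G.x\cap\mu\inv(\theta')=H.x$ is slightly circular, since that argument already used $\theta$-stability through Proposition~\ref{corollary_orbit}---the direct route is Lemma~\ref{prop_critical_moment}: $\mu(x)=\theta'$ makes $\phi^{\theta',x}$ minimal at the identity, and the second clause of that lemma then forces $\exp(iY).x=x$ whenever $\exp(iY).x\in\mu\inv(\theta')$, giving $Y=0$ by triviality of the stabilizer.
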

\begin{proof}
As $x$ is $\theta$-stable, by Proposition \ref{corollary_orbit} the orbit $G.x$ intersects $\mu\inv (\theta)$ exactly on a $H$-orbit. There exists $g\in G$ such that $g.x\in\mu\inv(\theta)$. Apply polar decomposition to this element $g=h_0 \exp\left(i Y^{\theta,x}\right)$ with $h_0\in H$ and $Y^{\theta,x}\in\mathfrak{h}$. Then 
\begin{equation*}
    \mu\inv(\theta)\cap G.x = H. \exp\left(i Y^{\theta,x} \right).x
\end{equation*}
Take $Y'$ such that $\exp\left(i Y'\right).x\in\mu\inv (\theta)$ then 
\[
\exp(i Y').x=h \exp(i Y^{\theta,x}).x
\]
for some $h$ in $H$. By triviality of the stabilizer of $x$ and uniqueness of polar decomposition $Y'=Y^{\theta,x}$ hence $Y^{\theta,x}$ is uniquely determined.
Let us check $H$-equivariance, for $h\in H$
\begin{equation*}
    \mu\inv(\theta)\ni h \exp(i Y^{\theta,x}).x=\exp\left(i  h . Y^{\theta,x}\right).h.x
\end{equation*}
by uniqueness $Y^{\theta,h.x}= h . Y^{\theta,x}$. Equation \eqref{inverse_Y} is clear.
\end{proof}  

\begin{remark}
The assumption that $x$ has a trivial stabilizer can be relaxed. Then there exists $Y^{x,\theta}\in \mathfrak{h}$ such that
\[
\left\lbrace Y \in \mathfrak{h} \left| \right. \exp(i Y).x\in \mu\inv(\theta)  \right\rbrace = \left(\Stab_H x \right). Y^{\theta,x}
\]
The right-hand side is the orbit of $Y^{\theta,x}$ under the adjoint action of the stabilizer of $x$ in $H$. For applications to quiver varieties we only need to consider the case of a trivial stabilizer.
\end{remark}

\begin{lemma}\label{lemma_diff}
Let $\theta\in Z(\mathfrak{h})$ and $x_0$ a $\theta$-stable point with trivial stabilizer. There exists an open neighborhood $U_{\theta,x_0}$ of $(\theta,x_0)$ in $\mathfrak{h}\times X$ and a smooth  map
\begin{equation*}
    \map{Y}{U_{\theta,x_0}}{\mathfrak{h}}{(\theta',x')}{Y(\theta',x')}
\end{equation*}
such that $\mu\left(\exp\left(i {Y}(\theta',x')\right).x'\right)=\theta'$.

\end{lemma}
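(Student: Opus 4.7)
The plan is to apply the implicit function theorem to the smooth map
\[
F\colon \mathfrak{h} \times \mathfrak{h} \times X \longrightarrow \mathfrak{h}, \qquad F(Y, \theta', x') := \mu\bigl(\exp(iY).x'\bigr) - \theta',
\]
at the base point $(Y_0, \theta, x_0)$, where $Y_0 := Y^{\theta, x_0}$ is furnished by Proposition \ref{remark_y_theta_x}. Then $F(Y_0, \theta, x_0) = 0$ by construction, so the question reduces to proving that the partial differential $\partial_Y F$ at this base point is a linear isomorphism of $\mathfrak{h}$. Setting $y_0 := \exp(iY_0).x_0$ and using Duhamel's formula for the derivative of the matrix exponential,
\[
\left.\tfrac{d}{dt}\right|_{t=0}\exp(iY_0 + itZ) = \tilde\Theta(Z)\cdot\exp(iY_0), \qquad \tilde\Theta(Z) := \int_0^1 \Ad_{\exp(s\cdot iY_0)}(iZ)\,ds \in \mathfrak{g},
\]
the chain rule gives $\partial_Y F(Z) = d_{y_0}\mu\cdot\tilde\Theta(Z)^*(y_0)$, where $V^*(y)$ denotes the infinitesimal $G$-action at $y$. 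Expanding the integral using $\mathrm{ad}_{iY_0} = i\,\mathrm{ad}_{Y_0}$ and splitting $\mathfrak{g} = \mathfrak{h}\oplus i\mathfrak{h}$ gives $\tilde\Theta(Z) = \tilde\Theta_1(Z) + i\tilde\Theta_2(Z)$ with $\tilde\Theta_j\colon \mathfrak{h}\to\mathfrak{h}$, and a short calculation identifies
\[
\tilde\Theta_2 \;=\; \frac{\sin(\mathrm{ad}_{Y_0})}{\mathrm{ad}_{Y_0}}.
\]

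The hypothesis $\theta \in Z(\mathfrak{h})$ is used crucially through the $H$-equivariance $\mu(h.x) = \Ad_h\mu(x)$: differentiating at $h = \mathrm{Id}$ one gets $d_{y_0}\mu(U^*(y_0)) = [U,\mu(y_0)] = [U,\theta] = 0$ for every $U \in \mathfrak{h}$, so the summand $d_{y_0}\mu\bigl(\tilde\Theta_1(Z)^*(y_0)\bigr)$ vanishes. The $G$-action on $X$ being holomorphic, $(iW)^*(y_0) = J\,W^*(y_0)$ with $J$ the complex structure, hence
\[
\partial_Y F(Z) = d_{y_0}\mu\cdot J\,\tilde\Theta_2(Z)^*(y_0).
\]
Pairing with $Z'\in\mathfrak{h}$ via the invariant inner product, using the identity $\langle d_{y_0}\mu(v), Z'\rangle = \omega((Z')^*(y_0), v)$ from Proposition \ref{prop_surj} together with the Kähler relation $\omega(u, Jv) = g(u,v)$ (where $g := \mathrm{Re}\,p$ is the Riemannian metric induced by the hermitian pairing) yields
\[
\langle \partial_Y F(Z), Z'\rangle \;=\; g\bigl((Z')^*(y_0),\ \tilde\Theta_2(Z)^*(y_0)\bigr).
\]

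The main obstacle is then injectivity of $\partial_Y F$, and it follows from two observations. First, $\mathrm{ad}_{Y_0}$ is skew-adjoint on $\mathfrak{h}$ for the $H$-invariant inner product; its complexified eigenvalues are purely imaginary $i\lambda$, so on each eigenblock the operator $\sin(\mathrm{ad}_{Y_0})/\mathrm{ad}_{Y_0}$ acts by the scalar $\sinh(\lambda)/\lambda > 0$. Hence $\tilde\Theta_2$ is a self-adjoint positive-definite automorphism of $\mathfrak{h}$. Second, $\Stab_G(y_0) = \exp(iY_0)\,\Stab_G(x_0)\,\exp(-iY_0)$ is trivial, so $W \mapsto W^*(y_0)$ embeds $\mathfrak{h}$ into $T_{y_0}X$. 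If $\partial_Y F(Z) = 0$, then choosing $Z' = \tilde\Theta_2(Z)$ in the displayed pairing forces $g(\tilde\Theta_2(Z)^*(y_0), \tilde\Theta_2(Z)^*(y_0)) = 0$, so $\tilde\Theta_2(Z)^*(y_0) = 0$, then $\tilde\Theta_2(Z) = 0$, and finally $Z = 0$. An injective endomorphism of a finite-dimensional space being invertible, the implicit function theorem delivers the smooth map $Y\colon U_{\theta,x_0} \to \mathfrak{h}$ required by the lemma.
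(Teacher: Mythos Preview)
Your argument is correct and follows the same overall plan as the paper --- apply the implicit function theorem to the map $F(Y,\theta',x')=\mu(\exp(iY).x')-\theta'$ at the point $(Y^{\theta,x_0},\theta,x_0)$ --- but your verification that $\partial_Y F$ is invertible is genuinely different from the paper's.

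The paper argues geometrically: at $y_0=\exp(iY^{\theta,x_0}).x_0\in\mu^{-1}(\theta)$ it invokes Proposition~\ref{corollary_orbit} to get $\mu^{-1}(\theta)\cap G.y_0=H.y_0$, hence the kernel of $d_{y_0}\mu$ restricted to $T_{y_0}G.y_0$ is exactly $T_{y_0}H.y_0$, and a dimension count gives the short exact sequence $0\to T_{y_0}H.y_0\to T_{y_0}G.y_0\to\mathfrak{h}\to 0$. The paper then asserts that $\partial_Y F$ ``identifies with an invertible map $i\mathfrak{h}\to\mathfrak{h}$''. Strictly speaking this last identification glosses over the Duhamel factor $\tilde\Theta$ (it is literally correct only when $Y^{\theta,x_0}=0$); to make it rigorous one must observe that the image of $\tilde\Theta\colon\mathfrak{h}\to\mathfrak{g}$ is transverse to $\mathfrak{h}$, which is exactly the invertibility of your $\tilde\Theta_2$. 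So your computation also fills a small gap that the paper leaves implicit.

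Your route is more self-contained: you compute $\partial_Y F$ explicitly via Duhamel, use centrality of $\theta$ (through the equivariance identity $d_{y_0}\mu(U^*(y_0))=[U,\theta]=0$) to discard the $\mathfrak{h}$-component, and finish with the positivity of $\sinh(\lambda)/\lambda$ on the spectrum of the skew-adjoint operator $\mathrm{ad}_{Y_0}$. This avoids appealing to Proposition~\ref{corollary_orbit} and gives a direct quantitative reason for invertibility. The paper's route, by contrast, is shorter and more conceptual once one accepts the exact sequence, and it makes the role of Kempf--Ness theory (the equality $\mu^{-1}(\theta)\cap G.x=H.x$) more visible. One cosmetic remark: in this section the complex structure is simply called $i$, so writing ``$J$'' for it risks confusion with the later quiver sections where $I,J,K$ denote distinct structures.
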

\begin{proof}
Note that when $\theta\in Z(\mathfrak{h})$ necessarily ${Y}(\theta,x)$ is equal to the $Y^{\theta,x} $ introduced in previous proposition. Let $Y^{\theta,x_0}$ such that $x:=\exp\left(i Y^{\theta,x_0}\right).x_0$ is in the intersection $G.x_0\cap \mu\inv(\theta)$. Consider the map
\begin{equation*}
    \map{f}{\mathfrak{h}\times\mathfrak{h}\times X}{\mathfrak{h}}{(Y',\theta',x')}{\mu\left(\exp(i Y').x'\right)-\theta'}
\end{equation*}
in order to use the implicit function theorem on a neighborhood of $\left(Y^{\theta,x_0},\theta,x_0\right)$ we first prove that the differential of $f$ with respect to $Y'$ at $(Y^{\theta,x_0},\theta,x_0)$ is invertible. As $x$ has a finite stabilizer, the embedding of tangent spaces $T_{x} H.x\xhookrightarrow{} T_{x} G.x$ identifies with the embedding 
\begin{equation}\label{eq_tangent_space}
    \mathfrak{h}\cong T_{\Id} H\xhookrightarrow{ }T_{\Id} G\cong \mathfrak{h}\oplus i\mathfrak{h}.
\end{equation}
By Proposition \ref{prop_surj}, $d\mu$ is surjective so that $\mu\inv(\theta)$ is a smooth manifold and $\ker d\mu_{x}=T_{x} \mu\inv(\theta)$. Proposition \ref{corollary_orbit} implies $\mu\inv(\theta)\cap G.x= H.x$ Restricting $d_x \mu $ to the tangent space of the $G$-orbit we obtain the following short exact sequence
\begin{equation*}
0\xhookrightarrow{} T_x H.x \xhookrightarrow{} T_x G.x \xrightarrow{\left.d_x \mu \right|_{T_x G.x}}  \mathfrak{h}\xrightarrow{} 0.
\end{equation*}
the surjectivity follows from dimension counting and the identification of the tangent spaces with \eqref{eq_tangent_space}. Thus we obtain the expected invertibility of the differential with respect to $Y'$ of $f$ at $(Y^{\theta,x_0},\theta,x_0)$, the map $d_{Y'}f_{(Y^{\theta,x_0},\theta,x_0)}$, identifies with an invertible map $i\mathfrak{h}\to \mathfrak{h}$. The implicit function theorem applies and gives the existence of $U_{\theta,x_0}\subset \mathfrak{h}\times X$ an open neighborhood of $(\theta,x_0)$ and the expected smooth map $Y(\dots,\dots)$.

\end{proof}

Next theorem is a first result concerning local triviality of the moment map, over the regular locus $B^{\reg}$ the real moment map is a locally trivial fibration.
\begin{theorem}\label{theorem_triviality}
Let $\theta_0$ in $B^{\reg}$, and $U_{\theta_0}$ the connected component of $B^{\reg}$ containing $\theta_0$. There is a diffeomorphism $f$ such that the following diagram commutes

\begin{equation*}
\begin{tikzcd}
 U_{\theta_0} \times {\mu}\inv \left(\theta_0\right)\arrow[swap]{rd}{} \arrow[r,"f"',"\sim"] &  \mu\inv( U_{\theta_0} ) \arrow{d}{{\mu}_{}} \\
 &   U_{\theta_0} 
\end{tikzcd}
\end{equation*}
Moreover $f$ is $H$ equivariant so that the diagram goes down to quotient
\begin{equation*}
\begin{tikzcd}
 U_{\theta_0} \times {\mu}\inv \left(\theta_0\right)/H\arrow[swap]{rd}{} \arrow[r,"\sim"] &  \mu\inv( U_{\theta_0} )/H \arrow{d} \\
 &   U_{\theta_0} 
\end{tikzcd}
\end{equation*}

\end{theorem}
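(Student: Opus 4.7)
The plan is to define $f$ and its inverse explicitly in terms of the map $Y$ produced by Proposition \ref{remark_y_theta_x}, and then extract smoothness from Lemma \ref{lemma_diff}. For $(\theta,x)\in U_{\theta_0}\times\mu\inv(\theta_0)$, the point $x$ is $\theta_0$-stable with trivial stabilizer by definition of $B^{\reg}$, and by Theorem \ref{theorem_C_open} the $\theta$-stable locus is constant as $\theta$ ranges over the connected component $U_{\theta_0}\subset B^{\reg}$, so $x$ is also $\theta$-stable with trivial stabilizer. Proposition \ref{remark_y_theta_x} then produces a unique $Y(\theta,x)\in\mathfrak{h}$ with $\exp(iY(\theta,x)).x\in\mu\inv(\theta)$, and I set
\[
f(\theta,x):=\exp\left(iY(\theta,x)\right).x\in\mu\inv(\theta)\subset\mu\inv(U_{\theta_0}).
\]
Commutativity of the triangle, namely $\mu\circ f=\mathrm{pr}_1$, is built into this construction.

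For the inverse, given $x'\in\mu\inv(U_{\theta_0})$ the parameter $\theta':=\mu(x')$ again lies in $U_{\theta_0}$, so $x'$ is $\theta'$-stable and hence $\theta_0$-stable by the same connected-component argument. I define
\[
g(x'):=\left(\mu(x'),\exp\left(iY(\theta_0,x')\right).x'\right).
\]
The identities $g\circ f=\Id$ and $f\circ g=\Id$ follow from uniqueness of $Y$ combined with equation \eqref{inverse_Y}: starting from $(\theta,x)$ with $\mu(x)=\theta_0$, the element $x'=\exp(iY(\theta,x)).x$ satisfies $\exp(-iY(\theta,x)).x'=x\in\mu\inv(\theta_0)$, so uniqueness forces $Y(\theta_0,x')=-Y(\theta,x)$ and hence $\exp(iY(\theta_0,x')).x'=x$; the symmetric verification is identical.

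The core work is smoothness, and the input is Lemma \ref{lemma_diff}: near any pair $(\theta,x_0)$ with $x_0$ a $\theta$-stable point of trivial stabilizer, the implicit function theorem applied to $(Y',\theta',x')\mapsto\mu(\exp(iY').x')-\theta'$ supplies a local smooth branch of $Y$. Uniqueness of $Y(\theta,x)$ forces any two such branches to agree on overlaps, so they patch together to a globally smooth function on the open set of admissible pairs, and in particular on $U_{\theta_0}\times\mu\inv(\theta_0)$ and on $\mu\inv(U_{\theta_0})$. Hence both $f$ and $g$ are smooth. Finally, for $H$-equivariance: since $\theta\in Z(\mathfrak{h})$ is fixed by the adjoint action of $H$, equation \eqref{equivariant_Y} gives $h.Y(\theta,x)=Y(\theta,h.x)$, and therefore $f(\theta,h.x)=\exp(ih.Y(\theta,x)).h.x=h.f(\theta,x)$, so $f$ descends to the quotient. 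The only step I expect to require genuine care is the patching argument that promotes the local branches of $Y$ from Lemma \ref{lemma_diff} to a single smooth map on the whole admissible locus; the remainder is bookkeeping on the results already established.
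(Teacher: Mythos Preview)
Your argument is correct and follows essentially the same route as the paper: define $f(\theta,x)=\exp(iY(\theta,x)).x$ with inverse $f^{-1}(x')=(\mu(x'),\exp(iY(\theta_0,x')).x')$, invoke Proposition \ref{remark_y_theta_x} and equation \eqref{inverse_Y} for bijectivity, Lemma \ref{lemma_diff} for smoothness, and equation \eqref{equivariant_Y} for equivariance. Your write-up is in fact more explicit than the paper's---particularly in citing Theorem \ref{theorem_C_open} for constancy of the stable locus along $U_{\theta_0}$ and in spelling out the uniqueness-based patching of the local branches of $Y$---but the underlying strategy is identical.
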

\begin{proof}
For $\theta\in U_{\theta_0}$ we know from \ref{subsect_regular_locus} that $X^{\theta\text{-s}}=X^{\theta_0\text{-s}}\ne\emptyset$. Define $f$ by
\begin{equation*}
    f(\theta,x):=\exp\left(i Y(\theta,x)\right).x
\end{equation*}
It follows from Proposition \ref{remark_y_theta_x} that it is invertible with inverse 
\begin{equation*}
    f\inv(x')=\left(\mu(x'),\exp\left(i Y(\theta_0,x')\right).x'\right).
\end{equation*}
Lemma \ref{lemma_diff} implies that $f$ is a diffeomorphism. Equivariance follows from equation \eqref{equivariant_Y} so that $f(\theta,h.x)=h.f(\theta,x)$  and $f$ goes down to a diffeomorphism between quotients.
\end{proof}

In next sections Nakajima quiver varieties are considered, they admit an additional hyperkähler structure. A similar trivialization is established in this hyperkähler context.

\section{Quiver varieties and stability} \label{section_quiver}
\subsection{Generalities about quiver varieties}\label{subsect_generalities}
The quiver varieties considered in this paper were introduced by Nakajima \cite{nakajima1994}.
Let $\Gamma$ be a quiver with vertices $\Omega_0$ and edges $\Omega_1$. For an edge $\gamma\in\Omega_1$ we denote $t(\gamma)\in \Omega_0$ its tail and $h(\gamma)\in \Omega_0$ its head, we define the reverse edge $\gammabar$ such that $t(\gammabar)=h(\gamma)$ and $h(\gammabar)=t(\gamma)$.
\begin{equation*}
    \begin{tikzcd}
    {}^{t(\gamma)}\bullet \arrow[r,bend left, "\gamma"] &\arrow[l,bend left, "\overline{\gamma}"] \bullet^ {h(\gamma)}
    \end{tikzcd}
\end{equation*}
Let $\overline{\Omega}_1:=\left\lbrace \gammabar \left|\gamma\in\Omega_1\right.\right\rbrace$ and $\widetilde{\Omega}:=\Omega_1\sqcup\overline{\Omega}_1$. For $\gammabar\in\overline{\Omega}_1$ we set $\overline{\gammabar}:=\gamma$ to obtain an involution on $\widetilde{\Omega}$.
The extended quiver $\widetilde{\Gamma}$ is obtained by adding an inverse to all edges in $\Omega_1$, its set of vertices is $\Omega_0$ and its set of edges is $\widetilde{\Omega}$. Let $\epsilon:\widetilde{\Omega}\to \left\lbrace-1,1\right\rbrace$ be the map
\[
\left\lbrace
\begin{array}{ccc}
     \epsilon(\gamma)= 1 &\text{ if }&\gamma\in\Omega_1  \\
     \epsilon(\gamma)= -1 &\text{ if }&\gamma\in\overline{\Omega}_1
\end{array}
\right.
\]
We fix a dimension vector $v\in \mathbb{N}^{\Omega_0}$. A representation of the quiver $\Gamma$ with dimension vector $v$ is a pair $(V,\phi)$ with $V=\bigoplus_{j\in \Omega_0}V_j$ a graded vector space with $\dim V_j=v_j$ and $\phi=(\phi_{\gamma})_{\gamma\in\Omega_1}$ a collection of linear maps $\phi_{\gamma}:V_{t(\gamma)}\to V_{h(\gamma)}$. A subrepresentation is a subspace $W\subset V$ with a compatible $\Omega_0$-grading and preserved by $\phi$. The set of quiver representations with dimension vector $v$ is identified with 
\[
\Rep\left(\Gamma,v\right):=\bigoplus_{\gamma\in\Omega_1}\Mat_{\C}(v_{h(\gamma)},v_{t(\gamma)}).
\]
For construction of quiver varieties it is interesting to consider representations of the extended quiver $\widetilde{\Gamma}$
\[
\Rep\left(\widetilde{\Gamma},v\right):=\bigoplus_{\gamma\in\widetilde{\Omega}}\Mat_{\C}(v_{h(\gamma)},v_{t(\gamma)}).
\]
It is a complex vector space, the complex structure considered in this section is
\[
I.(\phi_{\gamma} )_{\gamma\in\widetilde{\Omega}} = (i \phi_{\gamma} )_{\gamma\in\widetilde{\Omega}} 
\]
The group $\GL_v:=\prod_{i\in {\Omega_0}}\GL_{v_i}(\C)$ acts linearly on $\Rep(\widetilde{\Gamma},v)$
\[
g.\left(\phi_\gamma \right)_{\gamma\in\widetilde{\Omega}}:=\left(g_{h(\gamma)}\phi_\gamma g_{t(\gamma)}\inv
\right)_{\gamma\in\widetilde{\Omega}}.\]
The diagonal embedding of $\C^*$ in $\GL_v$ acts trivially so that the action goes down to an action of the group
\[
G_v:=\GL_v/\C^*,
\]
which identifies with
\[
G_v\cong\left\lbrace \left.(g_j)_{j\in {\Omega_0}}\in \GL_v\right| \prod_{j\in {\Omega_0}} \det(g_{j})=1 \right\rbrace.
\]
Note that $G_v$ is isomorphic to a product of a special linear group and a finite number of general linear groups so that it is a reductive group. The Lie algebra of $\GL_v$, respectively  $G_v$ is $\gl_v=\bigoplus_{j\in {\Omega_0}}\gl_{v_j}(\C)$ respectively.
\[
\gv = \left\lbrace (x_j)_{j\in {\Omega_0}}\in \gl_v \left|\sum_{j\in {\Omega_0}} \tr x_j = 0\right. \right\rbrace
\]
The center of $\gv$ is
\[
 Z(\gv)=\left\lbrace (\xi_j \Id_{v_j})_{j\in {\Omega_0}} \left|(\xi_j)_{j\in\Omega_0}\in {(\C)}^{\Omega_0} \text{ with } \sum_{j\in\Omega_0} v_j \xi_j =0 \right. \right\rbrace.
\]
Let $\theta\in\ZZ^{\Omega_0}$ such that $\sum_{j\in\Omega_0}v_j\theta_j=0$, define $\chi^\theta$ a character of $G_v$ by
\begin{equation} \label{chi_theta_quiver}
    \chi^\theta\left((g_j)_{j\in\Omega_0}\right)=\prod_{j\in\Omega_0}\det(g_j)^{-\theta_j}.
\end{equation}
The $\theta$-semistable locus, respectively $\theta$-stable locus in the sense of Mumford's Geometric Invariant Theory \cite{mumford_git}, are denoted by   $\Rep(\widetilde{\Gamma},v)^{\theta\text{-ss}}$, respectively $\Rep(\widetilde{\Gamma},v)^{\theta\text{-s}}$.

\begin{sect_definition}[Complex moment map]
The complex moment map is defined by
\begin{equation*}
    \map{\mu_{\C}}{\Rep(\widetilde{\Gamma},v)}{\gv}{\left(\phi_{\gamma}\right)_{\gamma\in\widetilde{\Omega}}}{\sum_{\gamma\in\widetilde{\Omega}}\epsilon(\gamma)\phi_{\gamma}\phi_{\overline{\gamma}}  }
\end{equation*}
it is $G_v$-equivariant for the adjoint action on $\gv$.
\end{sect_definition}
This complex moment map will be related to the real moment map of Definition \ref{def_moment} in next section.
\begin{sect_definition}[Nakajima quiver variety]\label{def_quiver_varieties}
For $\xi\in Z(\gv)$, the set $\mu_{\C}\inv(\xi)$ is an affine variety in $\Rep(\widetilde{\Gamma},v)$, it inherits a $G_v$ action. Nakajima quiver varieties are defined as GIT quotients:
\begin{equation*}
    \mathcal{M}_v^\theta(\xi):=\mu_{\C}\inv(\xi)\cap\Rep(\widetilde{\Gamma},v)^{\theta\text{-ss}}//G_v.
\end{equation*}
\end{sect_definition}
Those varieties are interesting from the differential geometry point of view and have an hyperkähler structure. We are interested in the family formed by those varieties when the parameters $\xi$ and $\theta$ are varying.
\subsection{King's characterization of stability of quiver representations}\label{subsect_King}
As in Section \ref{sect_kempf} the geometric invariant theory has a symplectic counterpart.
$\Rep\left(\widetilde{\Gamma},v\right)$ is an hermitian vector space with norm
\[
\left|\left|(\phi_{\gamma})_{\gamma\in\widetilde{\Omega}} \right|\right|^2 = \sum_{\gamma\in\widetilde{\Omega}} \tr(\phi_{\gamma}\phi_{\gamma}^{\dagger}).
\]
The $G_v$-action restricts to a unitary action of the maximal compact subgroup
\[
U_v=\left\lbrace \left.(g_j)_{j\in {\Omega_0}}\in \prod_{j\in {\Omega_0}} U_{v_j}\right| \prod_{j\in {\Omega_0}} \det(g_{v_j})=1 \right\rbrace
\]
The Lie algebra of $U_v$ is
\[
\uv = \left\lbrace (x_j)_{j\in {\Omega_0}}\in \bigoplus_{j\in {\Omega_0}}\mathfrak{u}_{v_j} \left|\sum_{j\in {\Omega_0}} \tr x_j = 0\right. \right\rbrace
\]
with $U_{v_j}$, respectively $\mathfrak{u}_{v_j}$, the group of unitary matrices, respectively the space of skew-hermitian matrices of size $v_j$. The real moment map $\mu_I$ for the $U_v$ action satisfies
\begin{equation*}
    \left\langle \mu_I(x),Y\right\rangle =\frac{1}{2} \left.\frac{d}{dt} ||\exp ( i t. Y).x||^2\right|_{t=0}
\end{equation*}
for $Y\in \uv$. The pairing is defined for $Y=(Y_j)_{j\in\Omega_0}$ and $Z=(Z_j)_{j\in\Omega_0}$ by
\begin{equation}\label{pairing_invariant}
    \left\langle Y, Z\right\rangle = \sum_{j\in\Omega_0} \tr(Y_j Z_j).
\end{equation}
As in \ref{subsect_correspondence_character}, to the character $\chi^{\theta}$ defined by \eqref{chi_theta_quiver} is associated the following element of the Lie algebra $\uv$
\begin{equation}\label{theta_uv}
\theta = (-i \theta_{j} \Id_{v_j})_{j\in {\Omega_0}}\in \uv.
\end{equation}
Indeed for $Y=(Y_j)_{j\in\Omega_0}$ in the Lie algebra $\uv$, by the usual differentiation of the determinant map at identity
\begin{equation*}
    d\chi^{\theta}_{\Id}(i Y_j)=-\sum_{j\in\Omega_0} i \theta_{j} \tr(Y_j) =\left\langle \theta, Y \right\rangle.
\end{equation*}

We recall here an important result from King giving a characterization of $\theta$-stability for quiver representations.

\begin{sect_theorem}[King \cite{king} Proposition 3.1] \label{king_stab}
Let $\theta\in\mathbb{Z}^{\Omega_0}$ such that $\sum \theta_j v_j = 0 $ and $\chi^{\theta}$ the associated character defined by \eqref{chi_theta_quiver} .
\begin{enumerate}
\item A quiver representation $(V,\phi)\in\Rep\left(\widetilde{\Gamma},v\right)$ is $\theta$-semistable if and only if for all subrepresentation $W\subset V$
\[
\sum_{j\in \Omega_0} \theta_{j} \dim W_j \le 0.
\]
\item  A quiver representation $(V,\phi)$ is a $\theta$-stable if and only if for all subrepresentation $W$ different from $0$ and $(V,\phi)$
\[
\sum_{j\in \Omega_0} \theta_{j} \dim W_j < 0.
\]
\end{enumerate} 
\end{sect_theorem}

The symplectic point of view allows to consider real parameters $\theta\in\RR^{\Omega_0}$ such that $\sum_{j\in\Omega_0}v_j\theta_j=0$. They are associated to elements $\chi^{\theta}\in\mathcal{X}^*(G_v)^{\RR}$ with well-defined modulus:
\[
\left|\chi^{\theta}\left((g_j)_{j\in\Omega_0}\right)\right|=\prod_{j\in\Omega_0} \left|\det(g_j)\right|^{-\theta_j}.
\]
The set of $\theta$-stable points in $\Rep\left(\widetilde{\Gamma},v\right)$ is defined by Definition \ref{def_real_stab}. The end of this section is devoted to a generalization of the second point of King's theorem for real parameters $\theta\in \RR^{\Omega_0}$ such that $\sum \theta_j v_j =0$.

Let $Y=\left( Y_j\right)_{j\in\Omega_0}\in \uv$, the $i Y_j$ are hermitian endomorphisms of $V^j$. For $\lambda\in \RR$ denote by $V^j_{\le \lambda}$ the subspace of $V^j$ spanned by eigenvectors of $i Y_j$ with eigenvalues smaller than $\lambda$ then define
\[
V_{\le\lambda}:=\bigoplus_{j \in \Omega_0} V^j_{\le\lambda}.
\]
\begin{sect_lemma}\label{lemma_subrep}
Let $x=(V,\phi)$ in $\Rep\left(\widetilde{\Gamma},v\right)$ and $Y\in \uv$. The limit 
\[
\lim_{t\to+\infty} \exp(i t Y).x
\]
exists if and only if for every $\lambda$ real, $V_{\le\lambda}$ defines a subrepresentation of $(V,\phi)$
\end{sect_lemma}
\begin{proof}
For all $j\in\Omega_0$ take a basis of $V^j$ formed by eigenvectors of $i Y_j$ and assume the eigenvalues repeated according to multiplicities are ordered
\[
\lambda^j_{1}\le \lambda^j_{2} \le \dots \le \lambda^j_{v_j} .
\]
In those basis of eigenvectors, for $\gamma\in \widetilde{\Gamma}$ one can write the matrix of $\phi^{\gamma}$ and compute the action of $\exp(i t Y)$
\[
\left( \exp(i t Y) .\phi \right)_{\gamma} = 
\begin{pmatrix}
\phi^{\gamma}_{1,1} & e^{ t(\lambda^{h(\gamma)}_1-\lambda^{t(\gamma)}_2)} \phi^{\gamma}_{1,2} & \dots & \\
e^{ t(\lambda^{h(\gamma)}_2-\lambda^{t(\gamma)}_1)}\phi^{\gamma}_{2,1} & & &  \\
\vdots &\dots & e^{ t(\lambda^{h(\gamma)}_a-\lambda^{t(\gamma)}_b)}  \phi^{\gamma}_{a,b} & \dots \\
e^{t(\lambda^{h(\gamma)}_{v_{h(\gamma)}}-\lambda^{t(\gamma)}_1)}\phi^{\gamma}_{v_{h(\gamma)},1} & & \dots & 

\end{pmatrix}
\]
the limit exists if and only if the matrix is upper triangular i.e. $\phi(V_{\le\lambda}) \subset V_{\le\lambda}$ and $V_{\le \lambda}$ defines a subrepresentation of $(V,\phi)$.
\end{proof}

Next result is the generalization of King's theorem relative to $\theta$-stability of quiver representations for a real parameter $\theta$. Its proof relies on previous lemma and the Hilbert-Mumford criterion for real one-parameter Lie groups \ref{theorem_HM}.
\begin{theorem}\label{th_king_lie}
Let $\theta\in \RR^{\Omega_0}$ such that $\sum_{j\in\Omega_0}\theta_j v_j = 0$ and $\chi^{\theta}$ the associated element in $\mathcal{X}^*(G_v)^{\RR} $. A quiver representation $(V,\phi)$ is $\theta$-stable if and only if for all subrepresentation $W\subset V$ different from $0$ and $(V,\phi)$
\[
\sum_{j\in\Omega_0} \theta_j \dim W_j < 0.
\]
\end{theorem}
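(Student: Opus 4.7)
The plan is to combine the Hilbert--Mumford criterion of Theorem~\ref{theorem_HM} with the translation of one-parameter real Lie groups into filtrations by subrepresentations provided by Lemma~\ref{lemma_subrep}. The central computation in both directions is to express the pairing $\langle\theta,Y\rangle$ in terms of $\sum_j\theta_j\dim W_j$ for subrepresentations $W$ appearing in the eigenspace filtration of $iY$. Throughout, $\theta\in\uv$ refers to the central element $(-i\theta_j\Id_{v_j})_{j\in\Omega_0}$ from \eqref{theta_uv}, and with the pairing \eqref{pairing_invariant} one checks that for $Y\in\uv$,
\begin{equation*}
\langle\theta,Y\rangle = -\sum_{j\in\Omega_0}\theta_j\tr(iY_j).
\end{equation*}

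\textbf{Forward direction.} Assume $(V,\phi)$ is $\theta$-stable and let $W\subsetneq V$ be a nonzero proper subrepresentation. Set $c:=\sum_j\dim W_j/\sum_j v_j\in(0,1)$ and define $Y=(Y_j)_{j\in\Omega_0}$ by
\begin{equation*}
iY_j := c\,\Id_{V_j} - P_{W_j},
\end{equation*}
where $P_{W_j}$ is the orthogonal projection onto $W_j$. The normalization of $c$ ensures $\sum_j\tr(iY_j)=0$, so $Y\in\uv$; since $c\notin\{0,1\}$ and at least one $W_j$ is nonzero and proper (or the pattern of zero/full $W_j$ is nontrivial), $Y\neq 0$. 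The spectrum of $iY_j$ consists of $c-1$ on $W_j$ and $c$ on $W_j^\perp$, so for every $\lambda\in\RR$ the subspace $V_{\le\lambda}$ is one of $0,W,V$, all of which are subrepresentations. Lemma~\ref{lemma_subrep} then yields the existence of $\lim_{t\to+\infty}\exp(itY).(V,\phi)$. Using $\sum_j\theta_jv_j=0$ to kill the constant $c$-term,
\begin{equation*}
\langle\theta,Y\rangle = -\sum_j\theta_j\bigl(cv_j-\dim W_j\bigr) = \sum_j\theta_j\dim W_j,
\end{equation*}
and applying the Hilbert--Mumford criterion of Theorem~\ref{theorem_HM} gives $\sum_j\theta_j\dim W_j<0$.

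\textbf{Converse direction.} Assume $\sum_j\theta_j\dim W_j<0$ for every nontrivial proper subrepresentation $W$. Let $Y\in\uv$ be nonzero such that $\lim_{t\to+\infty}\exp(itY).(V,\phi)$ exists, and let $\mu_1<\mu_2<\cdots<\mu_k$ be the distinct eigenvalues occurring among the hermitian operators $(iY_j)_{j\in\Omega_0}$. Set $W^{(s)}:=V_{\le\mu_s}$; by Lemma~\ref{lemma_subrep} each $W^{(s)}$ is a subrepresentation, giving a strict filtration $0=W^{(0)}\subsetneq W^{(1)}\subsetneq\cdots\subsetneq W^{(k)}=V$. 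Decomposing $iY_j=\sum_s\mu_s(\Pi_{W^{(s)}_j}-\Pi_{W^{(s-1)}_j})$, writing $b_s:=\sum_j\theta_j\dim W^{(s)}_j$, and performing an Abel summation using $b_0=0$ and $b_k=\sum_j\theta_jv_j=0$ yields
\begin{equation*}
\langle\theta,Y\rangle = \sum_{s=1}^{k-1}(\mu_{s+1}-\mu_s)\,b_s.
\end{equation*}
Nonvanishing of $Y$ combined with the traceless constraint in $\uv$ forces $k\ge 2$, so the sum is non-empty; each $W^{(s)}$ for $1\le s\le k-1$ is a nontrivial proper subrepresentation, hence $b_s<0$ by hypothesis, and the positive weights $\mu_{s+1}-\mu_s$ deliver $\langle\theta,Y\rangle<0$. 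Theorem~\ref{theorem_HM} concludes that $(V,\phi)$ is $\theta$-stable.

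The main obstacle is bookkeeping: disentangling the sign conventions between $Y_j$ skew-hermitian, $iY_j$ hermitian, and $\theta=(-i\theta_j\Id_{v_j})$, and verifying that the Abel rearrangement retains only the \emph{interior} steps $W^{(1)},\dots,W^{(k-1)}$ of the filtration, which are precisely the subrepresentations to which the hypothesis applies, with strictly positive coefficients $\mu_{s+1}-\mu_s$.
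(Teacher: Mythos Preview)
Your proof is correct and follows essentially the same route as the paper: both directions hinge on Theorem~\ref{theorem_HM} together with Lemma~\ref{lemma_subrep}, and the key computation is the same Abel summation expressing $\langle\theta,Y\rangle$ as a positive combination of the slopes $\sum_j\theta_j\dim V^j_{\le\lambda}$ along the eigenspace filtration. Your treatment is in fact slightly more careful than the paper's in two places: you explicitly normalize the test element $Y$ in the forward direction to lie in $\uv$ (the paper leaves this implicit), and in the converse you note that \emph{all} interior steps $W^{(1)},\dots,W^{(k-1)}$ are proper nonzero, so every term in the Abel sum is strictly negative, whereas the paper singles out only the first one.
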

\begin{proof}
Let $x=(V,\phi)$ in $\Rep\left(\widetilde{\Gamma},v\right)^{\theta\text{-s}}$ a $\theta$-stable point. By Hilber-Mumford criterion (Theorem \ref{theorem_HM}), for all $Y\in \uv$ such that $\lim_{t\to +\infty} \exp(i t Y).x$ exists then $\left\langle\theta,Y\right\rangle < 0$.

Let $W$ be a subrepresentation  of $(V,\phi)$ different from $0$ and $(V,\phi)$. For all $j\in\Omega_0$ define $Y_j$ in $\mathfrak{u}_{v_j}$ such that $W_j$ is an eigenspace of $i Y_j$ with eigenvalue $\lambda_1$ and $W_j^{\bot}$ the orthogonal complement of $W_j$ is an eigenspace of $i Y_j$ with eigenvalue $\lambda_2$ and $\lambda_2>\lambda_1$. By previous lemma
$\lim_{t\to +\infty} \exp(i t Y).x$ exists.
\begin{eqnarray*}
\left\langle\theta,Y\right\rangle &=& -\sum_{j\in\Omega_0} \theta_j \left(\lambda_1 \dim W_j+\lambda_2\left(\dim V_j -\dim W_j \right)\right) \\
&=& - \sum_{j\in\Omega_0} (\lambda_1-\lambda_2) \theta_j \dim W_j
\end{eqnarray*}
because $\sum \theta_j v_j = 0$. Then Hilbert-Mumford criterion implies $\left\langle\theta,Y\right\rangle<0$, hence $\sum_{j\in\Omega_0} \theta_j \dim W_j <0$.

Conversely let $x=(V,\phi)$ a quiver representation such that for all subrepresentation $W\varsubsetneq V$ different from $0$
\[
\sum_{j\in\Omega_0} \theta_j \dim W_j < 0.
\]
Let $Y=\left( Y_j \right)_{j\in\Omega_0} \in \uv$ different from zero. The set of  eigenvalues of $i Y_j$ is ordered $\lambda^j_1<\dots<\lambda^j_{d_j}$. The set of all eigenvalues for all $j\in\Omega_0$ is also ordered
\[
\left\lbrace\lambda^j_k\right\rbrace_{\substack{j\in\Omega_0 \\ 1\le k \le d_j }} = \left\lbrace \lambda_1, \lambda_2, \dots, \lambda_m \right\rbrace
\]
with $\lambda_k <\lambda_{k+1}$. For convenience add an element $\lambda_0<\lambda_1$.
If $\lim_{t\to =\infty} \exp(i t Y).x$ exists, by previous lemma $V_{\le \lambda}$ is a subrepresentation of $(V,\phi)$. Moreover
\begin{eqnarray*}
\left\langle\theta,Y\right\rangle &=& -  \sum_{j\in\Omega_0} \theta_j \sum_{k=1}^{d_j} \lambda^j_k \left(\dim V^j_{\le\lambda^j_k}-\dim V^j_{\le\lambda^j_{k-1}}\right)\\
&=& -  \sum_{j\in\Omega_0} \theta_j \sum_{k=1}^m \lambda_k \left(\dim V^j_{\le\lambda_k}-\dim V^j_{\le\lambda_{k-1}}\right) \\
&=& -  \sum_{j\in\Omega_0} \theta_j \sum_{k=1}^{m-1} \left(\lambda_k-\lambda_{k+1}\right) \dim V^j_{\le \lambda_k}  \\
& & - \lambda_m \sum_{j\in\Omega_0} \theta_j \dim V^j_{\le \lambda_m}.
\end{eqnarray*} 
The last summand vanishes as $\sum \theta_j v_j = 0$,
\[
\left\langle\theta,Y\right\rangle = -  \sum_{k=1}^m \left(\lambda_k-\lambda_{k+1}\right) \sum_{j\in\Omega_0} \theta_j \dim V^j_{\le \lambda_k}
\]
As $Y\ne 0$, it has at least two distinct eigenvalues. Then $V_{\le \lambda_1}$ is a subrepresentation different from zero and $V$ and 
\[
-(\lambda_0-\lambda_1)\sum_{j\in\Omega_0} \theta_j \dim V^j_{\le \lambda_1}<0
\]
so that $\left\langle\theta,Y\right\rangle<0$.
\end{proof}
This result is useful in next section to characterize a regular locus for the hyperkähler moment map.

\section{Nakajima quiver varieties as hyperkähler quotients and trivialization of the hyperkähler moment map}\label{sect_hyperkahler_quiver}
After some reminder about the hyperkähler structure of Nakajima quiver varieties, trivializations of the hyperkähler moment map are discussed. 
\subsection{Hyperkähler structure on the space of representations of an extended quiver}\label{subsect_hk_rep}
The space $\Rep\left(\widetilde{\Gamma},v\right)$ is endowed with three complex structures
\begin{eqnarray*}
I.\left(\phi_\gamma,\phi_{\overline \gamma}\right)&=&(i \phi_\gamma, i \phi_{\overline \gamma}) \\
J.\left(\phi_\gamma,\phi_{\overline \gamma}\right)&=& (-\phi_{\overline \gamma}^\dagger,\phi_\gamma^\dagger)\\
K.\left(\phi_\gamma,\phi_{\overline \gamma}\right)&=& (-i\phi_{\overline \gamma}^\dagger,i\phi_\gamma^\dagger)
\end{eqnarray*}
satisfying quaternionic relations
\begin{equation}\label{quaternionic}
I^2=J^2=K^2=IJK=-1
\end{equation} 
and a norm
\[
\left|\left|(\phi_\gamma)_{\gamma\in\widetilde{\Omega}}\right|\right|^2=\sum_{\gamma\in\widetilde{\Omega}}\tr\left(\phi_\gamma \phi_\gamma^\dagger\right).
\]
For each complex structure, polarisation identity defines an hermitian pairing compatible with $||\dots||$. For example the hermitian pairing compatible with the complex structure $I$ used in previous section is
\begin{equation*}
    p_I \left(u,v\right)=\frac{1}{4}\left(||u+v||^2-||u-v||^2+i||u+I.v||^2 -i||u-I.v||^2\right)
\end{equation*}
$p_J(\dots,\dots)$ and $p_K(\dots,\dots)$ are similarly defined.
One expression is particularly simple
\[
p_I\left((\phi_\gamma)_{\gamma\in \widetilde{ \Omega}},(\psi_\gamma)_{\gamma\in \widetilde{ \Omega}}\right)= \sum_{\gamma\in\widetilde{\Omega}} \tr(\phi_\gamma \psi_\gamma^\dagger).
\]
\begin{remark}
Even if the hermitian metric relies on the choice of complex structure, by the polarisation identity the real part remains the same, it is the hyperkähler metric 
\begin{equation*}
    g(\dots,\dots):=\Ree p_I(\dots,\dots)=\Ree p_J(\dots,\dots)=\Ree p_K(\dots,\dots).
\end{equation*}
\end{remark}
\begin{definition}[Real symplectic forms]
As in equation \eqref{symp_herm} we define a real symplectic form for each complex structure
\begin{eqnarray*}
\omega_I(\dots,\dots)&:=&g(I\dots,\dots) \\
\omega_J(\dots,\dots)&:=&g(J\dots,\dots) \\
\omega_K(\dots,\dots)&:=&g(K\dots,\dots)
\end{eqnarray*}
\end{definition}
\begin{notations}
$I$-linear means $\C$-linear with respect to the complex structure $I$ and similarly for $J$-linear and $K$-linear.
\end{notations}
\begin{proposition}[Permutation of complex structures]\label{permutation_complex}
Consider the map
\begin{equation*}
    \map{\Psi}{\Rep\left(\widetilde{\Gamma},v\right)}{\Rep\left(\widetilde{\Gamma},v\right)}{x}{\frac{1}{2}\left(1+I+J+K\right).x}
\end{equation*}
It is an isomorphism from the hermitian vector space $\Rep\left(\widetilde{\Gamma},v\right)$ with the complex structure $I$ and hermitian pairing $p_I$ to the hermitian vector space $\Rep\left(\widetilde{\Gamma},v\right)$ with the complex structure $J$ and pairing $p_J$.

More generally it cyclically permutes the three complex structure $I,J,K$
\begin{equation}\label{psi_complex}
\begin{array}{ccc}
\Psi(I.x)&=&J.\Psi(x)\\
\Psi(J.x)&=&K.\Psi(x)\\
\Psi(K.x)&=&I.\Psi(x).
\end{array}
\end{equation}
Such a map is sometimes called an hyperkähler rotation.
\end{proposition}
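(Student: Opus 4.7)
My plan is to reduce the statement to pure quaternionic algebra on the underlying real vector space, exploiting that by \eqref{quaternionic} the operators $I, J, K$ generate a representation of the unit quaternions by real linear maps on $\Rep(\widetilde{\Gamma},v)$, so that $q := \tfrac12(1+I+J+K)$ is a unit quaternion acting on the space.

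First I would establish the intertwining relations \eqref{psi_complex}. These are the operator identities $qI = Jq$, $qJ = Kq$, $qK = Iq$. The quaternionic relations \eqref{quaternionic} force $IJ = K$, $JK = I$, $KI = J$ together with $JI = -K$, $KJ = -I$, $IK = -J$, and with these in hand one expands
\[
qI = \tfrac12(I - 1 + JI + KI) = \tfrac12(-1 + I + J - K) = \tfrac12(J + I - 1 - K) = Jq,
\]
and the two remaining identities follow by the same cyclic computation. These relations imply in particular that $\Psi$ carries the $I$-complex structure to the $J$-complex structure in the sense that it is $\C$-linear when source is endowed with $I$ and target with $J$.

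Next I would verify that $\Psi$ is a real isometry of the hyperkähler metric $g$. Each of $I, J, K$ is $g$-skew-adjoint: from $\omega_L(u,v) = g(Lu,v)$ and antisymmetry of the symplectic form $\omega_L$, one reads off $g(Lu,v) = -g(u,Lv)$ for $L\in\{I,J,K\}$. Hence the $g$-adjoint of $q$ is $\bar q := \tfrac12(1-I-J-K)$, and a direct expansion gives
\[
q\bar q = \tfrac14\bigl(1 - I^2 - J^2 - K^2 - (IJ+JI) - (JK+KJ) - (IK+KI)\bigr) = 1,
\]
the three cross terms vanishing by the anticommutation of distinct imaginary units. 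Thus $\Psi$ is invertible with inverse the multiplication by $\bar q$, and it preserves $g$ and hence $||\dots||^2$.

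Finally, these two facts together yield the hermitian statement. Writing $p_I = g + i\,\omega_I$ and $p_J = g + i\,\omega_J$, the claim $p_J(\Psi x, \Psi y) = p_I(x,y)$ splits into a real part $g(\Psi x,\Psi y) = g(x,y)$, which is the isometry proved above, and an imaginary part $g(J\Psi x,\Psi y) = g(Ix,y)$. Using the intertwining $J\Psi = \Psi I$ from the first step, the left-hand side equals $g(\Psi(Ix), \Psi y)$, and we conclude once more by the $g$-isometry property. The only real subtlety throughout is verifying the quaternionic multiplication table from the explicit formulas defining $I, J, K$ on pairs $(\phi_\gamma, \phi_{\overline\gamma})$, which involves a swap of coordinates and a sign from the Hermitian adjoint; once those sign conventions are pinned down, the rest of the proof is mechanical.
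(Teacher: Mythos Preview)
Your proof is correct and follows essentially the same route as the paper: the intertwining relations are obtained by a direct quaternionic computation, and the hermitian compatibility is reduced to norm (equivalently $g$-metric) preservation. The paper's proof is terser---it expands $||(1+I+J+K).x||^2$ and cancels cross-terms using $g(Lu,u)=0$---whereas you phrase the same cancellation as $q^{\mathrm{adj}}q=\bar q\,q=1$ via skew-adjointness of $I,J,K$; these are the same computation in different clothing. Two cosmetic remarks: strictly speaking the isometry uses $\bar q\,q=1$ rather than $q\bar q=1$ (both hold, of course), and with the paper's polarization convention one has $p_I=g-i\,\omega_I$ rather than $g+i\,\omega_I$, but neither point affects your argument.
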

\begin{proof}
Relations \eqref{psi_complex} follow from a computation with the quaternionic relations \eqref{quaternionic}. To prove the compatibility with the hermitian structures it is enough to check that $||\Psi(x)||=||x||$.
\begin{equation*}
\left|\left|(1+I+J+K).x\right|\right|^2 = g\left((1+I+J+K).x,(1+I+J+K).x\right).
\end{equation*}
The expected result is obtain after cancellations from the identity $g(I.u,u)=0$, similar relations for the other complex structures and quaternionic relations \eqref{quaternionic}.
\end{proof}

In \ref{subsect_generalities} an $I$-linear action of $G_v$ is described. The hyperkähler rotation $\Psi$ provides the following construction for $J$-linear and $K$-linear actions. This three actions coincide when restricted to the compact subgroup $U_v$.
\begin{definition}[Complexification of the action]
Thanks to polar decomposition, to define a linear action of $G_v$ compatible with the complex structure $J$ it is enough to define the action of $\exp(i.Y)$ for $Y\in\uv$. To highlight the complex structure used, this action is written $\exp(J.Y)\dots$ and defined by
\begin{equation*}
\exp(J.Y).x:=\Psi \left(\exp(i.Y).\Psi\inv (x)\right)
\end{equation*}
with the element $\exp(i.Y)$ of $G_v$ acting by the natural $I$-linear action previously described. Similarly
\begin{equation*}
    \exp(K.Y).x:=\Psi\inv \left(\exp(i.Y).\Psi (x)\right).
\end{equation*}
\end{definition}
\begin{remark}\label{remark_different_stability}
A point $x$ is $\theta$-(semi)stable with respect to the $I$-linear action if and only if $\Psi(x)$ is $\theta$-(semi)stable with respect to the $J$-linear action.
\end{remark}
\subsection{Hyperkähler structure and moment maps}
By Proposition \ref{permutation_complex} the various  $G_v$-actions previously described are compatible with the hermitian metrics so that the constructions of section \ref{sect_kempf} apply. They provide a moment map for each complex structure.
\begin{eqnarray*}
    \left\langle \mu_I(x),Y\right\rangle &=&\frac{1}{2} \left.\frac{d}{dt} ||\exp ( t.I. Y).x||^2\right|_{t=0}\\
    \left\langle \mu_J(x),Y\right\rangle &=&\frac{1}{2} \left.\frac{d}{dt} ||\exp ( t.J. Y).x||^2\right|_{t=0}\\
    \left\langle \mu_K(x),Y\right\rangle &=&\frac{1}{2} \left.\frac{d}{dt} ||\exp ( t.K. Y).x||^2\right|_{t=0}.
\end{eqnarray*}
The pairing is defined by \eqref{pairing_invariant}.

\begin{definition}[Hyperkähler moment map]\label{def_hk_moment}
Those three real moment maps fit together in an hyperkähler moment map $\mu_{\mathbb{H}}:\Rep\left(\widetilde{\Gamma},v\right)\to \uv\oplus\uv\oplus\uv$ defined by $\mu_{\mathbb{H}}=\left(\mu_I,\mu_J,\mu_K\right)$.
\end{definition}
The moment map $\mu_{\C}$ defined in  \ref{subsect_generalities} by
\begin{equation}
\mu_{\C} \left( (\phi_\gamma)_{\gamma\in\widetilde\Omega} \right) := \sum_{\gamma\in\widetilde{\Omega}} \epsilon(\gamma) \phi_\gamma \phi_{\overline \gamma}. \label{algebraic_moment_map}
\end{equation}
can be expressed from the real moment maps
\begin{equation*}
    \mu_{\C}:=\mu_J+i\mu_K.
\end{equation*}
it is a polynomial map with respect to the complex structure $I$.
\begin{remark}
By cyclic permutation of the complex structure, $\mu_K + i \mu_I$ is polynomial with respect to the complex structure $J$ and $\mu_I + i \mu_J$ is polynomial with respect to the complex structure $K$.
\end{remark}
 Take $(\theta_{J,j})_{j\in {\Omega_0}}$ and $(\theta_{K,j})_{j\in {\Omega_0}}$ in $\RR^{\Omega_0}$ such that $\sum_{j} v_j \theta_{J,j} = \sum_{j} v_j \theta_{K,j} = 0$. Associate to each of them an element in the center of the Lie algebra $\uv$
 \begin{eqnarray*}
 \theta_J&:=&\left(-i\theta_{J,j}\Id_{v_j}\right)_{j\in\Omega_0} \\
 \theta_K&:=&\left(-i\theta_{K,j}\Id_{v_j}\right)_{j\in\Omega_0}.
 \end{eqnarray*}
 Then $\theta_J+i\theta_K$ defines an element in the center of $\gv=\uv\oplus i\uv$. Hence $\mu_J\inv(\theta_J)\cap\mu_K\inv(\theta_K)=\mu_{\C}\inv(\theta_J+i\theta_K)$ is an affine variety embedded in the vector space $\Rep\left(\widetilde{\Gamma},v\right)$ endowed with the complex structure $I$ and stable under the $G_v$-action.
Section \ref{sect_kempf} does not apply directly to this situation as $\mu_{\C}\inv(\theta_J+i\theta_K)$ might be singular. However it applies to the action of $G_v$ on the ambiant space $\Rep\left(\widetilde{\Gamma},v\right)$. For $\theta_I\in\RR^{\Omega_0}$ such that $\sum_{j\in\Omega_0} v_j \theta_{I,j}=0$ consider the associated element $\chi^{\theta_I}\in \mathcal{X}^*(G_v)^{\RR}$.

\begin{definition}[Hyperkähler regular locus]
For $w\in \mathbb{N}^{\Omega_0}$ a  dimension vector
\begin{equation*}
    H_w:=\left\lbrace(\theta_I,\theta_J,\theta_K)\in\left(\RR^{\Omega_0}\right)^3\left|\sum_{j} w_j \theta_{I,j} = \sum_j w_j \theta_{J,j}= \sum_j w_j \theta_{K,j} =0\right.\right\rbrace.
\end{equation*}
The regular locus is
\begin{equation}
    H_v^{\reg}=H_v \setminus \bigcup_{w< v} H_w
\end{equation}
the union is over dimension vector $w\ne v$ such that $0\le w_i\le v_i$.
\end{definition}
\begin{remark}
This regular locus is empty unless the dimension vector $v$ is indivisible, then $H_v^{\reg}$ is the complementary of a finite union of codimension $3$ real vector space. 
\end{remark}

Thanks to Kempf-Ness theory, Nakajima quiver varieties can be constructed as hyperkähler quotients. The underlying manifold of the variety $\mathcal{M}_v^{\theta_I}\left(\theta_J +i \theta_K\right)$ (see definition \ref{def_quiver_varieties}) is :
\[
\mathfrak{m}_v(\theta_I,\theta_J,\theta_K) = \mu_{\mathbb{H}}\inv (\theta_I,\theta_J,\theta_K)/U_v
\]

\subsection{Trivialization of the hyperkähler moment map}\label{subsect_triviality_hk}
We study the family of Nakajima quiver varieties when the parameters $(\theta_I,\theta_J,\theta_K)$ are varying. Nakajima proved by consecutive uses of different complex structures that for $\theta$ and $\theta'$ in $H_v^{\reg}$ the manifolds $\mathfrak{m}_v(\theta_I,\theta_J,\theta_K)$ and $\mathfrak{m}_v(\theta_I',\theta_J',\theta_K')$ are diffeomorphic \cite[Corollary 4.2]{nakajima1994}. We use this idea of consecutive uses of different complex structures to prove that those manifolds fit in a locally trivial family over the regular locus $H_v^{\reg}$. First let us highlight relevant facts about the regular locus. 
\begin{lemma}
Let $\left(\theta_I,\theta_J,\theta_K\right)\in H_v^{\reg}$ and $x\in \mu_J\inv(\theta_J)\cap\mu_K\inv(\theta_K)$. Then $x$ is $\theta_I$-stable if and only if it is $\theta_I$-semistable.
\end{lemma}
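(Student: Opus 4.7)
The forward implication (stability $\Rightarrow$ semistability) is immediate from Definition \ref{def_real_stab}; the content lies in the converse. Suppose for contradiction that $x=(V,\phi)$ is $\theta_I$-semistable but not $\theta_I$-stable. The plan is to exhibit a proper non-trivial subrepresentation $W\subset V$, with dimension vector $w=(\dim W_j)_j$ satisfying $\sum_j \theta_{I,j} w_j = \sum_j \theta_{J,j} w_j = \sum_j \theta_{K,j} w_j = 0$; this places $(\theta_I,\theta_J,\theta_K)$ in $H_w$ with $0\le w_j\le v_j$ and $w\ne v$, contradicting $(\theta_I,\theta_J,\theta_K)\in H_v^{\reg}$.

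To produce the first relation, I apply the contrapositive of Hilbert-Mumford (Theorem \ref{theorem_HM}): the failure of $\theta_I$-stability yields a nonzero $Y\in\uv$ with $\lim_{t\to+\infty}\exp(iYt).x$ existing and $\langle\theta_I,Y\rangle\ge 0$. The trace-zero constraint on $\uv$ forces $iY$ to have at least two distinct eigenvalues $\lambda_1<\dots<\lambda_m$, and by Lemma \ref{lemma_subrep} each $V_{\le\lambda_k}$ is a subrepresentation of $x$. The summation-by-parts identity appearing in the proof of Theorem \ref{th_king_lie} gives
\[
\langle\theta_I,Y\rangle = \sum_{k=1}^{m-1} (\lambda_{k+1}-\lambda_k) \sum_j \theta_{I,j}\dim (V_{\le\lambda_k})_j.
\]
A semistability analogue of Theorem \ref{th_king_lie}, proved along the same lines with non-strict inequalities, ensures that every inner sum $\sum_j \theta_{I,j}\dim (V_{\le\lambda_k})_j$ is $\le 0$. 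Since the coefficients $\lambda_{k+1}-\lambda_k$ are strictly positive and the total is $\ge 0$, every inner sum must in fact vanish; in particular $W:=V_{\le\lambda_1}$ is a proper non-trivial subrepresentation with $\sum_j \theta_{I,j}\dim W_j=0$.

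For the remaining two relations I exploit that the hypothesis $x\in\mu_J\inv(\theta_J)\cap\mu_K\inv(\theta_K)$ translates, via $\mu_{\C}=\mu_J+i\mu_K$, into $\mu_{\C}(x)=\theta_J+i\theta_K$. This element is central in $\gv$: its $j$-th component acts on $V_j$ as the scalar $(-i\theta_{J,j}+\theta_{K,j})\Id_{v_j}$, hence also as that scalar on the invariant subspace $W_j$. Summing traces over $j\in\Omega_0$ yields $\sum_j (-i\theta_{J,j}+\theta_{K,j})\dim W_j$. On the other hand, expanding $\mu_{\C}(x)=\sum_{\gamma\in\widetilde{\Omega}}\epsilon(\gamma)\phi_\gamma\phi_{\overline{\gamma}}$ and pairing each $\gamma\in\Omega_1$ with its reverse $\overline{\gamma}$, the identity $\tr(\phi_\gamma\phi_{\overline{\gamma}}|_{W_{h(\gamma)}})=\tr(\phi_{\overline{\gamma}}\phi_\gamma|_{W_{t(\gamma)}})$, valid because $W$ is a subrepresentation and so both restrictions are well defined, causes the total to telescope to zero. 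Equating the two expressions gives $\sum_j \theta_{J,j} w_j = 0$ and $\sum_j \theta_{K,j} w_j = 0$, completing the contradiction.

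The main obstacle is the semistability version of King's theorem for real parameters invoked above, which is not stated explicitly in the paper. However, adapting the proof of Theorem \ref{th_king_lie} with strict inequalities relaxed to non-strict ones furnishes it, so this is a routine adaptation rather than a substantive new step.
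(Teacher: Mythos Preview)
Your argument is correct and constitutes a genuinely different proof from the paper's. The paper proceeds geometrically: it invokes Maffei's result (\cite[Lemma 48]{Maffei}) that $d\mu_{\mathbb H}$ is surjective at every point of $\mu_{\mathbb H}\inv(\theta_I,\theta_J,\theta_K)$, so by Proposition~\ref{prop_surj} such points have trivial stabilizer; then, given a $\theta_I$-semistable $x$, the closure $\overline{G_v.x}$ meets $\mu_I\inv(\theta_I)$ at some $x_0$, and since $\mu_{\C}\inv(\theta_J+i\theta_K)$ is closed and $G_v$-invariant, $x_0$ lies in the full hyperk\"ahler fibre; triviality of $\Stab x_0$ forces $x_0\in G_v.x$, whence $x$ is $\theta_I$-stable. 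Your route is representation-theoretic: you extract a destabilizing subrepresentation $W$ from the failure of stability via Theorem~\ref{theorem_HM} and Lemma~\ref{lemma_subrep}, use semistability to pin down $\sum_j\theta_{I,j}w_j=0$ via Abel summation, and then obtain the two remaining relations from the elementary trace identity on the preprojective relation restricted to $W$.

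What each buys: the paper's argument is shorter and plugs directly into the Kempf--Ness framework already developed, but imports Maffei's surjectivity lemma as a black box. Your argument is more explicit---it actually names the wall $H_w$ witnessing the failure of regularity---and the trace-telescoping step is completely elementary; on the other hand it rests on the semistability half of King's criterion for \emph{real} $\theta$, which the paper does not prove (Theorem~\ref{th_king_lie} treats only the stable case). Your remark that this is a routine adaptation is fair: one shows that if some subrepresentation $W$ had $\sum_j\theta_{I,j}\dim W_j>0$, the associated one-parameter family $\exp(itY)$ (with $W$ and $W^\perp$ as eigenspaces) makes $\phi^{\theta_I,x}$ unbounded below, which is incompatible with $\overline{G_v.x}$ meeting $\mu_I\inv(\theta_I)$. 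It would strengthen your write-up to spell out this one line rather than leave it implicit.
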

\begin{proof}
If $x_0 \in \mu_{\mathbb{H}}\inv(\theta_I,\theta_J,\theta_K)$ its stabilizer in $G_v$ is trivial. Indeed Maffei proved that the differential of the moment map at $x_0$ is surjective \cite[Lemma 48]{Maffei}, then Proposition \ref{prop_surj} implies the triviality of the stabilizer of $x_0$. 

Let $x\in \mu_J\inv(\theta_J)\cap\mu_K\inv(\theta_K)$ a $\theta_I$-semistable point. Then $\overline{G_v .x}\cap \mu_I\inv(\theta_I)$ is not empty. As $\mu_J\inv(\theta_J)\cap\mu_K\inv(\theta_K)=\mu_{\C}\inv(\theta_J+i\theta_K)$ is $G_v$ stable, the closure of the orbit $\overline{G_v .x}$ meets $ \mu_{\mathbb{H}}\inv(\theta_I,\theta_J,\theta_K)$ at a point $x_0$. This point necessarily has a trivial stabilizer, hence $x_0\in G_v.x$ and $x$ is $\theta_I$-stable.
\end{proof}
Let $(\theta_I,\theta_J,\theta_K)\in H_v^{\reg}$ and consider first the complex structure $I$. By previous lemma and King's characterisation of stability (Theorem \ref{th_king_lie}), for $\theta'_I$ in an open neighborhood of $\theta_I$, stability with respect to $\theta'_I$ is the same as stability with respect to $\theta_I$.

Now consider the complex structure $J$. Thanks to Remark \ref{remark_different_stability} on the affine variety $\mu_K\inv(\theta_K)\cap\mu_I\inv(\theta_I)$ all $\theta_J$-semistable points are $\theta_J$-stable. Moreover for $\theta'_J$ in an open neighborhood of $\theta_J$, stability with respect to $\theta'_J$ is the same as stability with respect to $\theta_J$. Similarly for the complex structure $K$.

Assume that the dimension vector $v$ is a root of the quiver so that the moment map is surjective, see \cite[Theorem 2]{CB_surj}. Consider the diagram 
\begin{equation*}
\begin{tikzcd}
\mu_{\mathbb{H}}\inv \left(H_v^{\reg}\right)\arrow{r}{} \arrow{d}{} & \Rep\left(\widetilde{\Gamma},v\right)\arrow{d}{{\mu}_{\mathbb{H}}} \\
H_v^{\reg} \arrow[swap]{r}{} & \uv\oplus\uv\oplus\uv
\end{tikzcd}.
\end{equation*}.

\begin{theorem}[Local triviality of the hyperkähler moment map]\label{th_hk_moment}
Over the regular locus $H_v^{\reg}$, the hyperkähler moment map $\mu_{\mathbb{H}}$ is a locally trivial fibration compatible with the $U_v$-action:

Any $(\theta_I,\theta_J,\theta_K )\in H_v^{\reg} $ admits an open neighborhood $V$, and a diffeomorphism $f$ such that the following diagram commutes
\begin{equation*}
\begin{tikzcd}
V\times {\mu}_{\mathbb{H}}\inv (\theta_I,\theta_J,\theta_K )\arrow[swap]{rd}{} \arrow[r,"f"',"\sim"] &  \mu_{\mathbb{H}}\inv(V) \arrow{d}{\mu_{\mathbb{H}}} \\
 &  V
\end{tikzcd}
\end{equation*}
Moreover $f$ is compatible with the $U_v$-action so that the diagram goes down to quotient
\begin{equation*}
\begin{tikzcd}
V\times \mathfrak{m}_v(\theta_I,\theta_J,\theta_K)\arrow[swap]{rd}{} \arrow[r,"\sim"] &   \mu_{\mathbb{H}}\inv(V) /U_v \arrow[d] \\
 &  V
\end{tikzcd}
\end{equation*}

\end{theorem}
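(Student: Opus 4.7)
The plan is to build the trivializing diffeomorphism $f$ as a composition of three trivializations, one per complex structure, following the Kronheimer--Nakajima idea of consecutive use of $I$, $J$, $K$. The key algebraic input is that for $Y\in\uv$ and $Z\in Z(\uv)$ one has $[Y,Z]=0$, so $\Ad(\exp(iY))$ fixes $\mu_J(x)$ and $\mu_K(x)$ separately whenever both lie in $Z(\uv)$; hence the $I$-flow $x\mapsto \exp(iY).x$ preserves the level sets of both $\mu_J$ and $\mu_K$. By Proposition \ref{permutation_complex} the analogous statement for the $J$-action preserves $\mu_I$ and $\mu_K$, and for the $K$-action preserves $\mu_I$ and $\mu_J$. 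Thus each of the three one-parameter families of actions can move exactly one component of $\mu_{\mathbb{H}}$ while leaving the other two untouched.

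Fix $(\theta_I,\theta_J,\theta_K)\in H_v^{\reg}$ and choose a neighborhood $V$ small enough that the lemma preceding the theorem, combined with the real version of King's criterion (Theorem \ref{th_king_lie}) and Remark \ref{remark_different_stability} for the $J$- and $K$-actions, ensures that $\theta_I'$-stability, $\theta_J'$-stability and $\theta_K'$-stability all coincide with the corresponding semistability on the relevant intersections of level sets for every $(\theta_I',\theta_J',\theta_K')\in V$. By Maffei's result invoked through Proposition \ref{prop_surj}, every point of $\mu_{\mathbb{H}}\inv(V)$ has trivial $G_v$-stabilizer, and $d\mu_{\C}$ is surjective there, so each intermediate fiber $\mu_{\C}\inv(\theta_J'+i\theta_K')$ is smooth near the points we use. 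Lemma \ref{lemma_diff} (more precisely, its underlying implicit-function-theorem argument) then applies on each of these smooth slices.

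Given $x\in\mu_{\mathbb{H}}\inv(\theta_I,\theta_J,\theta_K)$ and $(\theta_I',\theta_J',\theta_K')\in V$, I set $x_1:=\exp(iY_I).x$ where $Y_I\in\uv$ is the unique element of Proposition \ref{remark_y_theta_x} solving $\mu_I(\exp(iY_I).x)=\theta_I'$; the centrality argument above gives $\mu_{\mathbb{H}}(x_1)=(\theta_I',\theta_J,\theta_K)$. I then transfer to complex structure $J$ via the hyperkähler rotation $\Psi$ of Proposition \ref{permutation_complex} and obtain $Y_J\in\uv$ with $x_2:=\exp(J.Y_J).x_1$ satisfying $\mu_{\mathbb{H}}(x_2)=(\theta_I',\theta_J',\theta_K)$, and one further iteration with the $K$-action yields $x_3$ with $\mu_{\mathbb{H}}(x_3)=(\theta_I',\theta_J',\theta_K')$. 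Define $f(\theta_I',\theta_J',\theta_K',x):=x_3$. Smoothness of $f$ comes from the implicit function theorem at each step; invertibility by running the three constructions backwards (using uniqueness in Proposition \ref{remark_y_theta_x}); and $U_v$-equivariance from the relation \eqref{equivariant_Y} applied in turn at each of the three stages. The quotient diagram then follows at once.

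The main obstacle is twofold and is exactly what the preceding sections have been designed to resolve: first, one must verify that the required stability and triviality-of-stabilizer hypotheses genuinely propagate through all three consecutive moves and through perturbations of \emph{all three} parameters simultaneously; second, one must run the implicit-function construction on the complex-symplectic fibers $\mu_{\C}\inv(\theta_J'+i\theta_K')$ rather than on the full ambient space. The first is handled by the lemma preceding the theorem together with Theorem \ref{th_king_lie} and Remark \ref{remark_different_stability}, and the second by Maffei's surjectivity of $d\mu_{\mathbb{H}}$, which simultaneously trivializes stabilizers and ensures the local smoothness needed for Lemma \ref{lemma_diff} to apply at every intermediate step.
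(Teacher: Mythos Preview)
Your proof is correct and follows essentially the same route as the paper: three consecutive applications of Proposition~\ref{remark_y_theta_x} and Lemma~\ref{lemma_diff}, one per complex structure, with $U_v$-equivariance via \eqref{equivariant_Y} and invertibility via \eqref{inverse_Y} (the paper happens to define $f^{-1}$ first and $f$ second, with the order $I,J,K$ reversed accordingly, but this is cosmetic). One minor simplification: you need not run the implicit-function construction on the fibers $\mu_{\C}^{-1}(\theta_J'+i\theta_K')$ nor worry about their smoothness---the paper applies Lemma~\ref{lemma_diff} on the full ambient space $\Rep(\widetilde{\Gamma},v)$, and the $I$-flow automatically stays on those fibers by the very $G_v$-equivariance of $\mu_{\C}$ that you articulate at the outset.
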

\begin{proof}
The method is similar to the proof of Theorem \ref{theorem_triviality} applied consecutively to the three complex structures. The idea of using different complex structures comes from \cite{nakajima1994} and \cite{kronheimer1989}. Take $(\theta_I,\theta_J,\theta_K)\in H_v^{\reg}$ and a connected open neighborhood $U_I\times U_J\times U_K$ such that for $\theta'_I\in U_I$, any $x\in\mu_J\inv(U_J)\cap\mu_K\inv(U_K)$ is $\theta_I'$-semistable if and only if it is  $\theta_I$-stable. Similarly for $U_J$ and $U_K$.  For any $x$ with $\mu_{\mathbb{H}}(x)=(\theta'_I,\theta'_J,\theta'_K)\in U_I\times U_J\times U_K$, by Proposition \ref{remark_y_theta_x} applied to the $I$-linear action of $G_v$ on $\Rep\left(\widetilde{\Gamma},v\right)$, there exists a unique $Y_I(\theta_I,x)\in \uv$ such that 
\[\exp\left(I.Y_I(\theta_I,x)\right).x\in \mu_{\mathbb{H}}\inv(\theta_I,\theta'_J,\theta'_K).\] 
Then by exchanging the three complex structures with hyperkähler rotations, there exists unique $Y_J(\theta_J,x)$ and $Y_K(\theta_K,x)$ such that 
\[\exp\left(J.Y_J(\theta_J,x)\right)\exp\left(I. Y_I(\theta_I,x)\right).x\in \mu_{\mathbb{H}}\inv(\theta_I,\theta_J,\theta'_K)\]
and 
\[\exp\left(K.Y_K(\theta_K,x)\right)\exp\left(J. Y_J(\theta_J,x)\right)\exp\left(I. Y_I(\theta_I,x)\right).x\in \mu_{\mathbb{H}}\inv(\theta_I,\theta_J,\theta_K).\]
This defines the map $f\inv$
\begin{equation*}
    f\inv(x):=\left((\theta'_I,\theta'_J,\theta'_K),\exp\left(K.Y_K(\theta_J,x)\right)\exp\left(J. Y_J(\theta_J,x)\right)\exp\left(I. Y_I(\theta_I,x)\right).x\right).
\end{equation*}
Lemma \ref{lemma_diff} implies the smoothness of $f\inv$. This map induces a diffeomorphism, indeed exchanging $\theta$ and $\theta'$ in previous construction produces the expected inverse 
\begin{equation*}
    f\left(x,(\theta'_I,\theta'_J,\theta'_K)\right):=\exp\left(I.Y_I(\theta'_I,x)\right)\exp\left(J.Y_J(\theta'_J,x)\right)\exp\left(K.Y_K(\theta'_K,x)\right).x
\end{equation*}
It follows from equation \eqref{inverse_Y} that the maps are inverse of each others. The exchange in the order of appearance of the complex structures $I,J$ and $K$ in the definition of $f$ and $f\inv$ are necessary as the exponentials do not necessarily commute.
The $U_v$-equivariance follows from equation \eqref{equivariant_Y}.
\end{proof}
Similarly one can consider the complex moment map $\mu_{\C}=\mu_J +i\mu_K$ instead of $\mu_{\mathbb{H}}$. The complex regular locus is $C_{v}^{\reg} :=C_v\setminus \bigcup_{w<v} C_w$ with
\[
C_w=\left\lbrace \xi\in \C^{\Omega_0} \left| \sum_{j\in\Omega_0} w_j \xi_j = 0 \right. \right\rbrace
\]

\begin{theorem}
The complex moment map is a locally trivial fibration over $C_v^{\reg}$. Any $\xi\in C_v^{\reg}$ admits an open neighborhood $V$, and a diffeomorphism $f$ such that the following diagram commutes
\begin{equation*}
\begin{tikzcd}
V\times {\mu}_{\mathbb{C}}\inv (\xi )\arrow[swap]{rd}{} \arrow[r,"f"',"\sim"] &  \mu_{\mathbb{C}}\inv(V) \arrow{d}{\mu_{\mathbb{C}}} \\
 &  V
\end{tikzcd}
\end{equation*}
\end{theorem}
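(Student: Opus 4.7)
The plan is to reduce to the hyperkähler trivialization (Theorem \ref{th_hk_moment}) by adjoining an auxiliary real parameter $\tau_I$ and normalizing the $\mu_I$-component via a Kempf-Ness argument in the $I$-linear direction. The key preliminary step is to check that every point of $\mu_{\C}\inv(\xi)$ with $\xi \in C_v^{\reg}$ is a simple representation with trivial $G_v$-stabilizer: if $W \subset V$ were a proper nonzero subrepresentation with dimension vector $w$, then restricting $\mu_{\C}(\phi) = (\xi_j \Id_{v_j})$ to each $W_j$ and summing traces would give $\sum_j w_j \xi_j = \tr \mu_{\C}^W = 0$ (the vanishing of the trace of a moment map coming from the alternating sign $\epsilon$ in \eqref{algebraic_moment_map}), contradicting $\xi \in C_v^{\reg}$; Schur's lemma then forces the stabilizer to be trivial. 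By Theorem \ref{th_king_lie}, every such point is moreover $\tau_I$-stable for any $\tau_I \in \RR^{\Omega_0}$ with $\sum_j v_j \tau_{I,j} = 0$.

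Given $\xi_0 = \theta_{J,0} + i\theta_{K,0} \in C_v^{\reg}$, I would fix any such $\tau_I$ (for instance $\tau_I = 0$); since $\xi_0 \in C_v^{\reg}$, one has $(\tau_I, \theta_{J,0}, \theta_{K,0}) \in H_v^{\reg}$, and Theorem \ref{th_hk_moment} produces, after shrinking, a product neighborhood $\{\tau_I\} \times V \subset H_v^{\reg}$ (with $V$ an open neighborhood of $\xi_0$ in $C_v^{\reg}$) together with a hyperkähler trivialization $f_{hk}$. For $(\xi, x) \in V \times \mu_{\C}\inv(\xi_0)$, Proposition \ref{remark_y_theta_x} applied to the $I$-linear action with parameter $\tau_I$ yields a unique $Y(\tau_I, x) \in \uv$ such that $\exp(I \cdot Y(\tau_I, x)).x \in \mu_I\inv(\tau_I)$; because $\xi_0$ is central and $\mu_{\C}$ is $G_v$-equivariant for the adjoint action, this normalized point also lies in $\mu_{\mathbb{H}}\inv(\tau_I, \theta_{J,0}, \theta_{K,0})$. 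I would then set
\[
f(\xi, x) := \exp\bigl(-I \cdot Y(\tau_I, x)\bigr) \cdot f_{hk}\bigl(\xi, \exp(I \cdot Y(\tau_I, x)).x\bigr),
\]
the outer factor again preserving the central value of $\mu_{\C}$, so that $f(\xi, x) \in \mu_{\C}\inv(\xi)$.

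Smoothness of $f$ follows from Lemma \ref{lemma_diff} and smoothness of $f_{hk}$, and the inverse is defined by the symmetric formula in which $f_{hk}$ is replaced by $f_{hk}\inv$. The identity $f \circ f\inv = \Id$ reduces, by uniqueness in Proposition \ref{remark_y_theta_x}, to the corresponding identity for $f_{hk}$: the element $Y(\tau_I, \cdot)$ associated to $f\inv(y)$ coincides with the one associated to $y$, because both $I$-normalize the same point of $\mu_I\inv(\tau_I)$. $U_v$-equivariance is inherited from \eqref{equivariant_Y} and from Theorem \ref{th_hk_moment}. The step I expect to be the main obstacle is the simplicity argument of the first paragraph: it is what ensures that the hypothesis of Proposition \ref{remark_y_theta_x} is satisfied at every point of $\mu_{\C}\inv(V)$, so that the $I$-normalization can be performed uniformly; once this is in place, everything else is a direct adaptation of Theorem \ref{th_hk_moment}, the only genuinely new feature being the conjugation by $\exp(\pm I \cdot Y(\tau_I, \cdot))$ that allows the (larger) fiber of $\mu_{\C}$ to be recovered from the (smaller) fiber of $\mu_{\mathbb{H}}$.
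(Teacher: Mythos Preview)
Your proof is correct and is a careful fleshing-out of what the paper indicates only by the single sentence ``The proof is similar to the hyperk\"ahler situation.'' Since the paper gives no details, your argument is effectively the missing proof rather than an alternative to it.

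One point worth stressing, because it justifies the extra layer you introduce: the conjugation by $\exp(\pm I\cdot Y(\tau_I,\cdot))$ is not merely a convenience for invoking Theorem~\ref{th_hk_moment} as a black box. If one tries to run only the $J$- and $K$-steps directly on a point $x\in\mu_{\C}\inv(\xi)$, the difficulty is that $\mu_I(x)$ is in general not central, so $\mu_K(x)+i\mu_I(x)$ is not fixed by the adjoint action and the $J$-step need not preserve $\mu_K$; symmetrically the $K$-step need not preserve $\mu_J$. Your simplicity argument (that any $x\in\mu_{\C}\inv(\xi)$ with $\xi\in C_v^{\reg}$ has no proper nonzero subrepresentation, hence is $\tau_I$-stable for every $\tau_I$ by Theorem~\ref{th_king_lie}) is precisely what allows one to first move $\mu_I$ to a central value, after which the $J$- and $K$-steps encapsulated in $f_{hk}$ do preserve $\mu_I$ and hence each other's output. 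So the ``main obstacle'' you identify is indeed the crux.

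Two very minor remarks. First, your phrase ``both $I$-normalize the same point of $\mu_I\inv(\tau_I)$'' is slightly imprecise: what you use is that $\exp(I\cdot Y(\tau_I,y))$ sends the second component of $f\inv(y)$ into $\mu_I\inv(\tau_I)$, so by uniqueness in Proposition~\ref{remark_y_theta_x} it must equal $Y(\tau_I,\cdot)$ for that component; the normalized points themselves are different. Second, to apply Proposition~\ref{remark_y_theta_x} you also need the trivial stabilizer, which you get from Schur once simplicity is established; this is implicit in your first paragraph but could be stated.
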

\begin{proof}
The proof is similar to the hyperkähler situation.
\end{proof}
Denote $\pi:\mu_{\mathbb{H}}\inv(H_v^{\reg})/U_v\to H_v^{\reg}$ the map obtained taking the quotient of $\mu_{\mathbb{H}}$. Consider the cohomology sheaves $\mathcal{H}^i \pi_* \qlbar$ of the derived pushforward of the constant sheaf and the cohomology sheaves $\mathcal{H}^i \pi_! \qlbar$ of the derived compactly supported pushforward of the constant sheaf.
\begin{corollary}
The sheaves $\mathcal{H}^i \pi_* \qlbar$ and $\mathcal{H}^i p_! \qlbar$ are constant sheaves over $H_v^{\reg}$.
\end{corollary}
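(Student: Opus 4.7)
The plan is to derive both assertions by combining the local triviality of $\mu_{\mathbb{H}}$ proved in Theorem \ref{th_hk_moment} with the simple connectedness of $H_v^{\reg}$.

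First, I would argue that the cohomology sheaves are locally constant. Given a point $\theta_0 = (\theta_I,\theta_J,\theta_K) \in H_v^{\reg}$ with neighborhood $V$, the quotient diagram in Theorem \ref{th_hk_moment} identifies $\mu_{\mathbb{H}}^{-1}(V)/U_v$ with $V \times \mathfrak{m}_v(\theta_0)$ compatibly with the projection to $V$. The restriction of $\pi$ to this open is therefore literally the projection from a product, and both derived pushforwards $R\pi_* \qlbar$ and $R\pi_! \qlbar$ restrict over $V$ to the constant sheaves with fibers $H^i(\mathfrak{m}_v(\theta_0),\qlbar)$ and $H^i_c(\mathfrak{m}_v(\theta_0),\qlbar)$ respectively (no base change theorem is needed since the map is literally a projection from a product). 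This gives local constancy of $\mathcal{H}^i \pi_* \qlbar$ and $\mathcal{H}^i \pi_! \qlbar$ on $H_v^{\reg}$.

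Second, I would check that $H_v^{\reg}$ is simply connected. The space $H_v$ is a real vector space, hence contractible, and $H_v^{\reg}$ is the complement in $H_v$ of the finite union $\bigcup_{w<v}(H_w \cap H_v)$. As indicated in the remark following the definition of $H_v^{\reg}$, each $H_w \cap H_v$ with $w \ne v$ and $v$ indivisible is a real linear subspace of codimension at least $3$ in $H_v$: the three constraints $\sum_j w_j \theta_{*,j}=0$ for $*\in\{I,J,K\}$ are independent of the three analogous constraints defining $H_v$ whenever $w$ is not proportional to $v$, which is guaranteed by the indivisibility of $v$. The complement of a finite union of real codimension-$\geq 3$ affine subspaces in a simply connected smooth manifold is itself simply connected, so $H_v^{\reg}$ is simply connected.

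Finally, a locally constant sheaf on a connected, simply connected, locally path-connected space is constant. Applying this to $\mathcal{H}^i \pi_* \qlbar$ and $\mathcal{H}^i \pi_! \qlbar$ yields the statement of the corollary. The only potentially delicate point in the plan is the codimension count for $H_w \cap H_v$ inside $H_v$, but this is already recorded in the paper; everything else is formal consequence of Theorem \ref{th_hk_moment} and the standard fact that locally constant sheaves on simply connected bases are constant.
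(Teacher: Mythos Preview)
Your argument is correct and follows essentially the same route as the paper's proof: local constancy from the local triviality in Theorem~\ref{th_hk_moment}, then simple connectedness of $H_v^{\reg}$ as the complement of finitely many codimension~$3$ linear subspaces. Your version is simply more explicit about why the product structure yields locally constant cohomology sheaves and about the codimension count.
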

\begin{proof}
By Theorem \ref{th_hk_moment} those sheaves are locally constant. $H_v^{\reg}$ is a complementary of a finite union of codimension $3$ real vector spaces, hence it is simply connected so that the locally constant sheaves are constant.
\end{proof}
Nakajima explained to us that this corollary can also be obtained by generalizing Slodowy's construction \cite{Slodowy_four} to quiver varieties.

Finally we extend the trivialization of the hyperkähler moment map over lines constructed by Crawley-Boevey and Van den Bergh \cite{CB_VDB} using twistor spaces as told to us by Nakajima. 

Denote by $\mathbb{H}$, respectively $\mathbb{H}_0$, the set of quaternions, respectively  the set of purely imaginary quaternions and $\mathbb{H}_0^*=\mathbb{H}_0\setminus \left\lbrace 0 \right\rbrace$. The space $\mathfrak{u}_v^{\oplus 3}$ is identified with $\mathbb{H}_0\otimes_{\RR} \mathfrak{u}_v$. Then the hyperkähler moment map reads
\[
\mu_{\mathbb{H}} = I \otimes \mu_I + J \otimes \mu_J + K\otimes \mu_K.
\]
Once an orthonormal basis of $\RR^3$ is fixed, the triple of complex structures $I$, $J$ and $K$ is fixed and we write $\mu_{\mathbb{R}}=\mu_I$, $\mu_{\mathbb{C}}=\mu_J + i \mu_K$.
The hyperkähler moment map is assumed to be surjective and the dimension vector indivisible. Then $H_v^{\reg}$ is the open subset of generic parameters in $\mathbb{H}_0\otimes_{\RR} Z(\mathfrak{u}_v)$. For $\theta\in H_v^{\reg}$ a generic parameter and $S$ a contractible subset of $\mathbb{H}_0^*$, Crawley-Boevey and Van den Bergh constructed a trivialization of the hyperkähler moment map over $S\otimes\theta$, see \cite{CB_VDB} proof of Lemma 2.3.3 (in the statement of this lemma $S$ is chosen to be a complex line). The assumption contractible is relaxed in next theorem. It relies on the theory of twistor spaces developped by Penrose \cite{Penrose}, Atiyah-Hitchin-Singer \cite{AHSinger} and Salamon \cite{Salamon1982}\cite{Salamon}. The main point is the compatibility between hyperkähler quotients and twistor spaces from Hitchin-Karlhede-Lindström-Roček \cite{hklr} p.$560$, see also Hitchin \cite{Hitchin_1992_hk}. The following Theorem as well as its proof was told  to us by Nakajima.
\begin{theorem}\label{th_cbvdb}
For $\theta$ generic in $\mathbb{H}_0\otimes_{\RR} Z(\mathfrak{u}_v)$ define
\[
\mathbb{H}_0^*.\theta = \left\lbrace h\otimes \theta  \left| h\in \mathbb{H}_0^*\right. \right\rbrace.
\]
There exists a diffeomorphism $f$ such that the following diagram commutes
\[
\begin{tikzcd}
\mu_{\mathbb{H}}\inv(\mathbb{H}_0^*.\theta)/U_v \arrow[r,"f"] \arrow[rd,"\mu_{\mathbb{H}}",swap]& \mu_{\mathbb{H}}\inv(\theta)/U_v \times \mathbb{H}_0^*.\theta \arrow[d]\\
 & \mathbb{H}_0^*.\theta 
\end{tikzcd}
\]
the vertical arrow is the projection to $\mathbb{H}_0^*.\theta$.
\end{theorem}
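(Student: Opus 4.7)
The plan is to combine the complex-line trivialization of Crawley-Boevey and Van den Bergh with the twistor-space description of hyperkähler quotients, using the sphere of complex structures to sweep the extra real direction of $\mathbb{H}_0^*\cdot\theta$ beyond a single complex line.

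First I would recall twistor theory. For a hyperkähler manifold $M$, the twistor space $\mathcal{Z}(M)=M\times S^2$ carries an integrable complex structure whose restriction to the $M$-factor at $(x,u)$ is $I_u=aI+bJ+cK$ for $u=(a,b,c)\in S^2\subset\mathbb{H}_0$, and whose restriction to $S^2$ is the standard complex structure of $\mathbb{CP}^1$. The HKLR theorem \cite{hklr} ensures compatibility with hyperkähler reduction: for each $u\in S^2$, the fibre over $u$ of the twistor space of $\mu_{\mathbb{H}}^{-1}(\theta)/U_v$ is the holomorphic symplectic reduction of $(\Rep(\widetilde{\Gamma},v),I_u)$ at the $I_u$-complex moment map $\mu_u^{\C}=\mu_v+i\mu_w$, where $(u,v,w)$ is an oriented orthonormal frame of $\mathbb{H}_0$.

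Second, for each $u\in S^2$ the hyperkähler rotation of Proposition \ref{permutation_complex} and its variants produce a $U_v$-equivariant identification of $(I,J,K)$ with $(I_u,J_u,K_u)$ for a suitable orthonormal frame. Transporting \cite[Lemma 2.3.3]{CB_VDB} through this rotation gives, for every $u$ and every generic point of the parameter space, a trivialization of $\mu_{\mathbb{H}}$ over the $I_u$-complex line through that point in $\mathbb{H}_0\otimes Z(\uv)$. Since every $h\in\mathbb{H}_0^*$ lies on such a complex line for any $u\in S^2\cap h^{\perp}$ (a whole circle of admissible choices), these local trivializations cover $\mathbb{H}_0^*\cdot\theta$ by open neighborhoods.

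Third, to patch the local data into a single global diffeomorphism $f$, I would exploit the fact that the CB-VDB trivialization is built purely from the $I_u$-holomorphic symplectic structure. Varying $u\in S^2$, these data assemble into a single holomorphic object on an open subset of the twistor space $\mathcal{Z}$, so the trivializations at distinct $u$ agree automatically on overlaps. Restricting this object to the real locus of $\mathcal{Z}$ corresponding to $\mathbb{H}_0^*\cdot\theta$ yields the required diffeomorphism; $U_v$-equivariance is inherited from the equivariance of each local piece.

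The main obstacle is carrying out the holomorphic gluing rigorously. The naive approach of transporting trivializations along continuous paths in $S^2$ is obstructed by the nontriviality of the frame bundle $\mathrm{SO}(3)\to S^2$; passing through the twistor space replaces such choices with a holomorphic cocycle whose vanishing is built into the construction. Verifying smoothness of the descent from $\mathcal{Z}$ to the real base, and checking $U_v$-equivariance at every step, is the most delicate part of the plan.
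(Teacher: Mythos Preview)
Your proposal has the right ingredients—twistor spaces and the HKLR compatibility with hyperkähler reduction—but it deploys them in a way that manufactures an unnecessary gluing problem. The paper's argument is more direct and sidesteps precisely the obstacle you flag as ``most delicate.''

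The paper's route is this. Normalize so that $\theta=I\otimes\theta_I$; then the $\SU(2)$-orbit of $\theta$ under $h\cdot(h'\otimes\theta_I)=hh'\bar h\otimes\theta_I$ identifies with the sphere $\mathcal{S}^2$. Now take the twistor space $\mathcal{T}$ of the hyperkähler quotient $M=\mu_{\mathbb{H}}^{-1}(\theta)/U_v$. By the very definition of a twistor space, the underlying smooth manifold of $\mathcal{T}$ is the product $M\times\mathcal{S}^2$, with projection $p$ to $\mathcal{S}^2$. HKLR's compatibility with reduction says that for each $I_u\in\mathcal{S}^2$ the fibre $p^{-1}(I_u)$, viewed as a complex manifold, is $\mu_{\C}^{-1}(\theta_J'+i\theta_K')\cap\mu_{\RR}^{-1}(\theta_I')/U_v$ where $(\theta_I',\theta_J',\theta_K')=I_u\otimes\theta_I$; in other words it is $\mu_{\mathbb{H}}^{-1}(I_u\otimes\theta_I)/U_v$. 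So the smooth product structure of the twistor space \emph{is} the trivialization of $\mu_{\mathbb{H}}$ over the sphere $\SU(2).\theta$—no patching is needed. The remaining radial direction is handled by the homogeneity $\mu_{\mathbb{H}}(tx)=t^2\mu_{\mathbb{H}}(x)$, which realizes $\mathbb{H}_0^*\cong\mathcal{S}^2\times\RR_{>0}$ as a genuine product.

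By contrast, you cover $\mathbb{H}_0^*\cdot\theta$ by overlapping complex lines indexed by $u\in S^2$, invoke the CB--VdB trivialization on each, and then attempt to glue. The holomorphic gluing you sketch (``these data assemble into a single holomorphic object \dots\ so the trivializations agree automatically'') is not actually carried out, and the $\mathrm{SO}(3)\to S^2$ obstruction you yourself raise is a genuine issue for any naive patching. The paper avoids all of this: its decomposition $\mathbb{H}_0^*=\mathcal{S}^2\times\RR_{>0}$ has no overlaps, the twistor space delivers the $\mathcal{S}^2$ factor in a single stroke, and the CB--VdB complex-line trivialization is never used as a building block (it is mentioned only for comparison).
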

\begin{proof}
Consider the quaternionic vector space $\Rep\left(\widetilde{\Gamma},v\right)$ and the projection
\[
\Rep\left(\widetilde{\Gamma},v\right)\times \mathcal{S}^2 \to \mathcal{S}^2.
\]
With $\mathcal{S}^2$ the $2$-sphere of imaginary quaternions with unit norm
\[
\mathcal{S}^2=\left\lbrace a I + b J + c K \left| a^2+b^2+c^2 = 1 \right. \right\rbrace.
\]
$\mathcal{S}^2$ is given the usual complex structure of the projective line. The twistor space associated to $\Rep\left(\widetilde{\Gamma},v\right)$ is the manifold $\Rep\left(\widetilde{\Gamma},v\right)\times \mathcal{S}^2$ endowed with a complex structure such that the fiber over $I_u\in \mathcal{S}^2$ is $\Rep\left(\widetilde{\Gamma},v\right)$ seen as a vector space with complex structure $I_u$.

As detailed in \cite{CB_VDB}, the group of quaternion of unit norm, identified with $\SU(2)$, acts on $\mathbb{H}^0\otimes Z(\uv)$ by \[
h.\left( h'\otimes \theta \right)= h h' \overline{h} \otimes \theta.
\]
with $\overline{ a I + b J +c K + d }={ -a I - b J -c K + d }$. Let $\theta$ a generic parameter, up to the choice of orthonormal basis of $\mathbb{R}^3$ we can assume $\theta= I \otimes \theta_I$. The $\SU(2)$ orbit of $\theta$ thus identifies with $\mathcal{S}^2$ as
\begin{equation}\label{eq_su2}
\SU(2).\theta = \left\lbrace I_u \otimes \theta \left| I_u \in \mathcal{S}^2\right. \right\rbrace.
\end{equation}
The twistor space of the hyperkähler manifold $\mu_{\mathbb{H}}\inv(\theta) /U_v$ is a complex manifold $\mathcal{T}$ with an holomorphic map $p$ to $\mathcal{S}^2$
\[
\begin{tikzcd}
\mathcal{T}\arrow[r,"p"] & \mathcal{S}^2.
\end{tikzcd}
\]
The underlying differential manifold of the twistor space is just a product and $p$ the projection to the second factor
\[
\begin{tikzcd}
\mu_{\mathbb{H}}\inv(\theta) /U_v\times \mathcal{S}^2 \arrow[r]&\mathcal{S}^2.
\end{tikzcd}
\]
The twistor spaces construction is compatible with hyperkähler quotients as explained in \cite{hklr} p.$560$. Thus the fiber of $p$ over $I_u$ is $\mu_{\mathbb{H}}\inv(\theta)/U_v$ endowed with the complex structure inherited from the complex structure $I_u$ on $\Rep\left(\overline{\Gamma},v\right)$. Namely if $I_u \otimes \theta = \left(\theta_I',\theta_J',\theta_K'\right)$ then the fiber of the twistor space over $I_u$ is the complex manifold
\[
p\inv(I_u) = \mu_{\mathbb{C}}\inv(\theta_J'+i\theta_K') \cap\mu_{\mathbb{R}}\inv(\theta_I')/U_v
\]
Thus fibers of $p$ are exactly fibers of $\mu_{\mathbb{H}}$ and the twistor space provides trivialization of the hyperkähler moment map over the orbit $\SU(2).\theta$:
\[
\begin{tikzcd}[column sep=tiny]
\mu_{\mathbb{H}}\inv (\SU(2).\theta)/ U_v \arrow[d,"\mu_{\mathbb{H}}",swap] \arrow[rr,"\beta"] & & \mathcal{T}   \arrow[dr,"p",swap] \arrow[rr,"\gamma"] & & \mu_{\mathbb{H}}\inv(\theta)/U_v\times \mathcal{S}^2\arrow[dl] \\
\arrow[rrr,"\sim","\alpha"']  \SU(2).\theta & && \mathcal{S}^2 &
\end{tikzcd}
\]
$\alpha$ is defined thanks to \eqref{eq_su2}, the map $\beta$ is the identity on the fibers and $\gamma$ forgets the complex structure. This diagram traduces the equivalence between, on the right, varying complex structure on a fixed fiber $\mu_{\mathbb{H}}\inv(\theta)/U_v$ and on the left varying the fiber for a fixed complex structure $I$. 

The construction is similar to Crawley-Boevey and Van den Bergh's construction except that the twistor space formalism allows to obtain a trivialization over the non-contractible space $\SU(2).\theta$.

As in \cite{CB_VDB}, the trivialization can be extended thanks to the $\RR_{>0}$ action. Note that for $t$ a positive real number $\mu_{\mathbb{H}}(t x)= t^2 \mu_{\mathbb{H}}(x)$. Then identifying $\mathcal{S}^2\times \mathbb{R}_{>0}$ with $\mathbb{H}_0^*$ we obtain the trivialization
\[
\begin{tikzcd}
\mu_{\mathbb{H}}\inv (\mathbb{H}_0^* .\theta)/ U_v \arrow[d] \arrow[r]  & \mu_{\mathbb{H}}\inv (\theta) / U_v \times \mathbb{H}_0^* \arrow[d] \\
 \mathbb{H}_0^* .\theta \arrow[r]  & \mathbb{H}_0^*
\end{tikzcd}
\]
The $\SU(2)$-action on the base of this trivialization traduces the variation of complex structure on the hyperkähler manifold $\mu_{\mathbb{H}}\inv(\theta)/U_v$ whereas the $\RR_{>0}$ action traduces the rescaling of the metric.
\end{proof}

\bibliographystyle{alpha}
\bibliography{biblio2} 

\begin{thebibliography}{CBVdB04}

\bibitem[AHS78]{AHSinger}
M.~F. Atiyah, N.~J. Hitchin, and I.~M. Singer.
\newblock Self-duality in four-dimensional riemannian geometry.
\newblock {\em Proceedings of the Royal Society of London. Series A,
  Mathematical and Physical Sciences}, 362(1711):425--461, 1978.

\bibitem[CB06]{CB_surj}
William Crawley-Boevey.
\newblock Quiver algebras, weighted projective lines, and the
  {D}eligne-{S}impson problem.
\newblock {\em International Congress of Mathematicians, ICM 2006}, 2, 05 2006.

\bibitem[CBVdB04]{CB_VDB}
William Crawley-Boevey and Michel Van~den Bergh.
\newblock Absolutely indecomposable representations and {K}ac-{M}oody {L}ie
  algebras.
\newblock {\em Inventiones mathematicae}, 155(3):537--559, Mar 2004.

\bibitem[GK04]{ginzburg_kaledin}
Victor Ginzburg and Dmitry Kaledin.
\newblock Poisson deformations of symplectic quotient singularities.
\newblock {\em Advances in Mathematics}, 186(1):1 -- 57, 2004.

\bibitem[GRS13]{GVR_Hilbert}
Valentina Georgoulas, Joel Robbin, and Dietmar Salamon.
\newblock The moment-weight inequality and the {H}ilbert-{M}umford criterion.
\newblock 11 2013.

\bibitem[GS82]{Guillemin}
V.~Guillemin and S.~Sternberg.
\newblock Convexity properties of the moment mapping.
\newblock {\em Inventiones mathematicae}, 67:491--514, 1982.

\bibitem[Hit92]{Hitchin_1992_hk}
Nigel Hitchin.
\newblock Hyperkähler manifolds.
\newblock {\em Séminaire Bourbaki}, 34:137--166, 1991-1992.

\bibitem[HKLR87]{hklr}
N.~Hitchin, A.~Karlhede, Ulf Lindstrom, and Martin Roček.
\newblock Hyperkähler metrics and supersymmetry.
\newblock {\em Communications in Mathematical Physics}, 108:535--589, 11 1987.

\bibitem[Hos13]{Hoskins}
Victoria Hoskins.
\newblock {Stratifications associated to reductive group actions on affine
  spaces}.
\newblock {\em The Quarterly Journal of Mathematics}, 65(3):1011--1047, 10
  2013.

\bibitem[HW08]{Harada}
Megumi Harada and Graeme Wilkin.
\newblock Morse theory of the moment map for representations of quivers.
\newblock {\em Geometriae Dedicata}, 150, 07 2008.

\bibitem[Kin94]{king}
A.~D. King.
\newblock {Moduli of representations of finite dimensional algebras}.
\newblock {\em The Quarterly Journal of Mathematics}, 45(4):515--530, 12 1994.

\bibitem[Kir84]{Kirwan}
Frances~Clare Kirwan.
\newblock {\em Cohomology of Quotients in Symplectic and Algebraic Geometry.
  (MN-31), Volume 31}, volume 104.
\newblock Princeton University Press, 1984.

\bibitem[KN79]{kempf_ness}
George Kempf and Linda Ness.
\newblock The length of vectors in representation spaces.
\newblock In Knud L{\o}nsted, editor, {\em Algebraic Geometry}, pages 233--243,
  Berlin, Heidelberg, 1979. Springer Berlin Heidelberg.

\bibitem[KP84]{Kac_Peterson}
V.G. Kac and D.H. Peterson.
\newblock Unitary structure in representations of infinite-dimensional groups
  and a convexity theorem.
\newblock {\em Inventiones mathematicae}, 76:1--14, 1984.

\bibitem[Kro89]{kronheimer1989}
P.~B. Kronheimer.
\newblock The construction of {ALE} spaces as hyper-kähler quotients.
\newblock {\em J. Differential Geom.}, 29(3):665--683, 1989.

\bibitem[Maf02]{Maffei}
Andrea Maffei.
\newblock A remark on quiver varieties and {W}eyl groups.
\newblock {\em Annali della Scuola Normale Superiore di Pisa - Classe di
  Scienze}, Ser. 5, 1(3):649--686, 2002.

\bibitem[Mey73]{Meyer}
Symmetries and integrals in mechanics.
\newblock {\em Dynamical systems (M. Peixoto, ed.), Academic Press New York},
  pages 259--273, 1973.

\bibitem[MF82]{mumford_git}
D.~Mumford and J.~Fogarty.
\newblock {\em Geometric Invariant Theory}.
\newblock Ergebnisse der Mathematik und ihrer Grenzgebiete. Springer-Verlag,
  1982.

\bibitem[Mig96]{migliorini}
L.~Migliorini.
\newblock Stability of homogeneous vector bundles.
\newblock {\em Boll. Un. Mat. Ital.}, 7-B:963--990, 1996.

\bibitem[MW74]{Marsden_Weinstein}
Jerrold Marsden and Alan Weinstein.
\newblock Reduction of symplectic manifolds with symmetry.
\newblock {\em Reports on Mathematical Physics}, 5(1):121 -- 130, 1974.

\bibitem[Nak94]{nakajima1994}
Hiraku Nakajima.
\newblock Instantons on {A}{L}{E} spaces, quiver varieties, and {K}ac-{M}oody
  algebras.
\newblock {\em Duke Math. J.}, 76(2):365--416, 11 1994.

\bibitem[NM84]{Ness}
Linda Ness and David Mumford.
\newblock A stratification of the null cone via the moment map.
\newblock {\em American Journal of Mathematics}, 106(6):1281--1329, 1984.

\bibitem[OVG94]{lie_groups_III}
A.L. Onishchik, E.B. Vinberg, and Vladimir Gorbatsevich.
\newblock Lie groups and {L}ie algebras {III}. {S}tructure of {L}ie groups and
  {L}ie algebras. {T}ransl. from the {R}ussian by {V}. {M}inachin.
\newblock 01 1994.

\bibitem[Pen76]{Penrose}
R.~Penrose.
\newblock {Nonlinear Gravitons and Curved Twistor Theory}.
\newblock {\em Gen. Rel. Grav.}, 7:31--52, 1976.

\bibitem[Sal82]{Salamon1982}
Simon Salamon.
\newblock Quaternionic {K}ähler manifolds.
\newblock {\em Inventiones mathematicae}, 67:143--172, 1982.

\bibitem[Sal86]{Salamon}
S.~M. Salamon.
\newblock Differential geometry of quaternionic manifolds.
\newblock {\em Annales scientifiques de l'École Normale Supérieure},
  19(1):31--55, 1986.

\bibitem[Sja98]{Sjamaar}
Reyer Sjamaar.
\newblock Convexity properties of the moment mapping re-examined.
\newblock {\em Advances in Mathematics}, 138(1):46 -- 91, 1998.

\bibitem[Slo80]{Slodowy_four}
P.~Slodowy.
\newblock Four lectures on simple groups and singularities.
\newblock 1980.

\end{thebibliography}
\end{document}